\documentclass[12pt]{amsart}

\usepackage{upref,amssymb,bm,mathrsfs}
\usepackage{enumerate}
\usepackage{mathtools}
\usepackage{soul}

\usepackage[centering,width=6in,truedimen]{geometry}
\geometry{a4paper,height=9in}
\parskip.5ex plus.1ex
\linespread{1.1}

\usepackage[colorlinks,pagebackref,pdfstartview=FitH]{hyperref}

\hypersetup{
    colorlinks,
    citecolor=blue,
    filecolor=black,
    linkcolor=blue,
    urlcolor=black
}

\theoremstyle{plain}
\newtheorem{thm}{Theorem}[section]
\newtheorem{prop}[thm]{Proposition}
\newtheorem{lem}[thm]{Lemma}
\newtheorem{cor}[thm]{Corollary}

\newtheorem{ques}[thm]{Question}
\newtheorem*{ques*}{Question}
\newtheorem{oques}[thm]{Open question}
\theoremstyle{definition}
\newtheorem{defn}[thm]{Definition}

\theoremstyle{remark}
\newtheorem{rem}[thm]{Remark}

\DeclareMathOperator{\hdim}{dim_H}
\DeclareMathOperator{\adim}{dim_A}
\DeclareMathOperator{\ldim}{dim_L}
\DeclareMathOperator{\bdim}{dim_B}
\DeclareMathOperator{\ubdim}{\overline{dim}_B}
\DeclareMathOperator{\lbdim}{\underline{dim}_B}
\DeclareMathOperator{\pdim}{dim_P}

\newcommand{\N}{\mathbb N}

\newcommand{\R}{\mathbb R}

\numberwithin{equation}{section}
\renewcommand{\epsilon}{\varepsilon}
\newcommand{\supp}{\mbox{supp}}

\newcommand{\tb}[1]{\textcolor{blue}{#1}}

\usepackage{float}

\title[Distinct dimensions for attractors of  iterated function systems]{Distinct dimensions for attractors of bi-Lipschitz iterated function systems}

\author{Simon Baker}

\address{
Mathematical Sciences\\
Loughborough University\\
Loughborough\\
LE11 3TU, UK
}
\email{simonbaker412@gmail.com}

\author{Amlan Banaji}

\address{
Department of Mathematics and Statistics \\
University of Jyv\"askyl\"a \\
P.O. Box 35 (MaD), FI-40014 \\
Finland
}
\email{banajimath@gmail.com}

\author{De-Jun Feng}
\address{
	Department of Mathematics\\
	The Chinese University of Hong Kong\\
	Shatin, Hong Kong
}
\email{djfeng@math.cuhk.edu.hk}
\author{Chun-Kit Lai}
\address{
	Department of Mathematics\\
	San Francisco State University\\
	1600 Holloway Avenue, San Francisco, CA 94132
}
\email{cklai@sfsu.edu}

\author{Ying Xiong}
\address{
	Department of Mathematics\\
	South China University of Technology\\
	Guangzhou 510641, Guangdong\\
	People's Republic of China
}
\email{xiongyng@gmail.com}

\subjclass{28A80 (Primary), 37B10 (Secondary)}

\keywords{Assouad dimension,  box dimensions, Hausdorff dimension, lower dimension, iterated function systems, bi-Lipschitz IFS}

\begin{document}

\begin{abstract}
	In this paper, we construct an iterated function system on $\mathbb{R}$ consisting of two bi-Lipschitz contractions whose attractor has distinct lower, Hausdorff, lower box, upper box, and Assouad dimensions, thereby providing negative answers to certain folklore questions.

      Furthermore, as a by-product of our study of bi-Lipschitz IFSs, we construct IFSs within this family that exhibit interesting fractal behaviour. In particular, we prove the following two statements:
(i) There exists a bi-Lipschitz IFS for which the pushforward of any ergodic measure with positive entropy is not exact dimensional; (ii)
 There exists a bi-Lipschitz IFS whose attractor has empty interior yet positive Lebesgue measure.

\end{abstract}

\maketitle

\section{Introduction}

\subsection{Background}

By an iterated function system (IFS) on $\R^d$, we mean a finite collection $\Phi=\{\phi_i\}_{i=1}^\ell$ of contraction mappings from $\R^d$ to itself. It is well known \cite{Hutch81} that given an IFS $\Phi=\{\phi_i\}_{i=1}^\ell$ on $\R^d$, there exists a unique non-empty compact set $E\subset \R^d$, called the {\bf attractor} of $\Phi$, such that
\[
E  = \bigcup_{i=1}^{\ell} \phi_i(E).
\]
The attractor $E$ is said to be  {\bf self-similar} if all $\phi_i$ are similitudes, and  {\bf self-affine}  if all $\phi_i$ are affine maps. The IFS $\Phi$ is said to be a {\bf bi-Lipschitz IFS} if the maps $\phi_i$ are bi-Lipschitz contractions on $\R^d$, i.e., there exist constants $0<A\le B<1$ such that
\[
A|x-y|\le |\phi_i(x)-\phi_i(y)|\le B|x-y|, ~\mbox{ for all } x,y\in\R^d.
\]
 One of the fundamental problems in fractal geometry is to study the dimension theory of attractors of IFSs; see e.g.~\cite{BSS2023, BisPe17, Falconer2014, Mattila1995}. In this paper, we are mainly concerned with the following two folklore questions.

\begin{ques}
\label{q-1}
Does the box dimension of the attractor of an IFS on $\R^d$ always exist?
\end{ques}
\begin{ques}\label{ques:hausdorffbox}
    Do the Hausdorff and lower box dimensions of the attractor of an IFS on $\R$ always coincide?
\end{ques}

 It is well known that Hausdorff dimension, lower box dimension and upper box dimension satisfy $\dim_{\mathrm H} E \leq \lbdim E \leq \ubdim E$ for all non-empty bounded sets $E \subset \R^d$, and that these inequalities can be strict in general. We say that the box dimension of $E$ exists if $\lbdim E = \ubdim E$. The question of which classes of IFSs allow these inequalities to be strict is much more subtle. In \cite[Theorems~3-4]{Falco89} (see also \cite[Chapter~3]{Falco97}), Falconer gave some sufficient conditions for these three notions of dimension to coincide for a compact metric space. As an application, he showed that, without requiring any separation conditions, the box dimension of a self-similar set in $\R^d$ always exists and coincides with its Hausdorff dimension. Furthermore, this property (namely, the coincidence of the Hausdorff, lower box, and upper box dimensions) also holds for all $C^{1+\delta}$ conformal repellers. Later, this property was also verified for all $C^1$ conformal repellers by Gatzouras and Peres \cite{GatPe97} and Barreira \cite{Barreira1996}, for all $C^{1+\delta}$ random self-conformal sets by Liu and Wu \cite{LiuWu2003}, and for the attractors of all $C^1$ (weakly) conformal IFSs by Feng and Hu \cite[Theorem~8.1]{FenHu09}. However, the situation for non-conformal repellers or self-affine sets is quite different. It is well known \cite{Bedford1984, McMul84} that the Hausdorff and box dimensions of a special class of planar self-affine sets, known as  Bedford--McMullen carpets, are generally distinct. However, the box dimension of Bedford--McMullen carpets always exists, and it remains an important open question whether the box dimension of every self-affine set exists.

Recently, Jurga \cite{Jurga23} showed that there exist integers $n>m\geq 2$ and  a compact subset $E$ of the $2$-torus ${\mathbb T}^2$ such that $E$ is invariant under the expanding toral endomorphism $T(x,y) = (mx \pmod{1}, ny \pmod{1})$, and  the box dimension of $E$ does not exist. However, this invariant set may not be the attractor of an IFS. It has been known since the classical work of Mauldin and Urba\'nski \cite{MU1996,MU1999} that the attractor of an infinite conformal IFS (which is generally non-compact) can have distinct Hausdorff and upper box dimensions. Recently Banaji and Rutar~\cite{BanRu24} showed that for these sets, the Hausdorff dimension, lower box dimension, and upper box dimension can all be distinct. There are more detailed discussions about different notions of dimension for different classes of fractals in \cite{BSS2023, BisPe17}. In particular, in \cite[Theorem~2.8.4]{BisPe17}, it was shown that the packing dimension is always equal to the upper box dimension for the attractor of a bi-Lipschitz IFS.

\subsection{Attractors with distinct dimensions}

As the main result of this paper, we provide an explicit construction of an IFS consisting of two bi-Lipschitz contractions on $\R$, for which the box dimension of its attractor does not exist. Moreover, the lower, Hausdorff, lower box, upper box, and Assouad dimensions of the attractor are all distinct.   This gives a negative answer to both Question~\ref{q-1} and Question~\ref{ques:hausdorffbox}. The reader is referred to Section \ref{S-2} for definitions of various notions of dimension.

\begin{thm}\label{t:!bdim}
	There exists a bi-Lipschitz iterated function system on $\R$ such that the lower, Hausdorff, lower box, upper box and Assouad dimensions of the attractor are all distinct.
\end{thm}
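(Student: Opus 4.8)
The plan is to construct an IFS $\Phi=\{\phi_1,\phi_2\}$ on $\R$ whose attractor $E$ realizes five prescribed distinct values for the lower, Hausdorff, lower box, upper box, and Assouad dimensions. The natural strategy is to build the maps $\phi_i$ so that the "local contraction rates" are not constant but instead vary along the orbit in a carefully controlled way, since for genuine similitudes all five dimensions coincide. Concretely, I would design each $\phi_i$ to be piecewise-similar (or piecewise-linear) with contraction ratio depending on the scale/position, engineered so that the cylinder sets $\phi_{i_1}\circ\cdots\circ\phi_{i_n}(E)$ have diameters that fluctuate between two growth regimes as $n\to\infty$. The guiding principle is that the Hausdorff and lower box dimensions are governed by the "thinnest" (most-contracted) arbitrarily long runs, the upper box dimension by the "thickest" runs, and the Assouad and lower dimensions by the extremal local behaviour at a single point across all scales.

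The key steps, in order, would be: first, fix five target values $t_L<t_H\le\lbdim\le\ubdim<t_A$ and specify a sequence of scales (e.g.\ along a sparse sequence $n_k\to\infty$) at which the covering/packing counts of cylinders switch between the exponent realizing the lower box value and the one realizing the upper box value; this oscillation is what forces $\lbdim E<\ubdim E$ and answers Question~\ref{q-1}. Second, within each such block I would arrange the contraction ratios of the two maps to differ, so that the mass distribution principle (for the lower bound) and efficient covers (for the upper bound) pin down the Hausdorff dimension strictly below the lower box dimension, addressing Question~\ref{ques:hausdorffbox}. Third, I would introduce a point $x_0\in E$ (a distinguished fixed point or limit of cylinders) near which the set is locally much denser than average, so that microsets / weak tangents at $x_0$ inflate $\adim E$ above $\ubdim E$; dually, a point where the set is locally sparse drives $\ldim E$ strictly below $\hdim E$. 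Throughout, bi-Lipschitz contractivity with uniform constants $0<A\le B<1$ must be verified for every branch of the (possibly infinitely-described but finitely-many-map) system.

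The main obstacle I anticipate is \emph{simultaneity}: it is not hard to separate any two of these five dimensions, but forcing all five to be pairwise distinct while keeping the system to just two bi-Lipschitz maps on the line is delicate, because the same contraction data controls several of the dimensions at once and the inequality chain $\ldim\le\hdim\le\lbdim\le\ubdim\le\adim$ must be strict at every link. In particular, decoupling $\hdim$ from $\lbdim$ (which coincide for all conformal IFSs by the results cited above, hence requires genuinely non-conformal/non-self-similar behaviour) while simultaneously decoupling the Assouad dimension from the upper box dimension seems to require a two-scale or hierarchical construction in which local contraction rates depend on position in a self-referential manner. I would therefore expect the heart of the proof to lie in choosing the defining data — likely a rapidly growing sequence $(n_k)$ together with position-dependent ratios — so that five independent asymptotic quantities can be read off, and in proving matching upper and lower dimension bounds via covering arguments and a carefully constructed Bernoulli-type or Gibbs-type measure supported on $E$.
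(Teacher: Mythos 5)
Your proposal is a strategy outline rather than a proof, and while it correctly identifies the three phenomena that must be engineered (oscillation between two scaling regimes along a sparse sequence to split the box dimensions; non-conformal, position-dependent contraction to split Hausdorff from lower box; extremal local behaviour to split Assouad and lower dimensions from the rest), it leaves every load-bearing step unspecified. The first concrete gap is that you never explain how to guarantee that your fluctuating Cantor-type set is the attractor of an IFS with exactly \emph{two} bi-Lipschitz maps. This is not a routine verification: the maps must be defined from the Cantor structure itself (sending $I_\varnothing$ onto $I_0$ and $I_1$ and each gap onto the corresponding gap), and one needs a usable criterion for when such maps are bi-Lipschitz. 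The paper devotes Theorem~\ref{thm:bilip} to exactly this: the maps are bi-Lipschitz if and only if the ratios $|I_{i\omega}|/|I_\omega|$ \emph{and} $|G_{i\omega}|/|G_\omega|$ are uniformly bounded away from $0$ and $1$ over all words $\omega$. The gap condition is easy to overlook and is essential; "piecewise-similar with ratio depending on scale/position" does not by itself produce maps satisfying it, and indeed the resulting maps are typically nowhere differentiable on the attractor (Proposition~\ref{prop-no-diff}), so they are far from the piecewise-linear picture you describe.

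The second gap is the absence of any length function that achieves all four strict inequalities simultaneously, together with matching upper and lower bounds. The paper's choice is $\ell^*(\omega_1\ldots\omega_n) = \frac{a_{\omega_1}\cdots a_{\omega_{\lfloor\beta n\rfloor}}}{(a_{\omega_1}\cdots a_{\omega_n})^\beta}\bigl(\prod_{i=1}^n M_i\bigr)^{-1}$, a McMullen-carpet-inspired factor (which forces $\hdim<\lbdim$ via McMullen's lemma and a combinatorial count of words with prescribed digit frequencies) modulated by a base sequence $M_i\in\{M,M+1\}$ switching on blocks $[N_k,N_{k+1})$ with $N_{k+1}/N_k\to\infty$ (which forces $\lbdim<\ubdim$). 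Crucially, the separation $\ubdim<\adim$ and $\ldim<\hdim$ is \emph{not} obtained, as you propose, by inserting a specially engineered dense or sparse point --- with only two maps there is little freedom to do so --- but falls out of the same length function: the localized word counts at $u=1^k$ and $u=0^{N_{2j}}$ give explicit formulas for $\adim$ and $\ldim$, and a Jensen/concavity argument (Lemma~\ref{lem-Last}) shows these automatically sit strictly outside the box-dimension range. Without a candidate length function and without this final comparison, your plan cannot be completed as stated.
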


\begin{rem}\label{rem:higherdims}
    For an integer $d \geq 2$, the product of the attractor from Theorem~\ref{t:!bdim} with the $(d-1)$-fold product of the middle-third Cantor set has distinct lower, Hausdorff, lower box, upper box and Assouad dimensions, and is the attractor of a bi-Lipschitz IFS on $\R^d$. Thus, Theorem~\ref{t:!bdim} also holds in higher dimensions.
\end{rem}

Theorem~\ref{t:!bdim} will be restated as Theorem~\ref{thm-5.1} in Section \ref{S-5}, where we provide a concretely constructed Cantor set ${\bf E}^*$, which is the attractor of a bi-Lipschitz IFS and has distinct dimensions; moreover the precise values of these dimensions are given in Theorem~\ref{thm-5.1}.

The idea of our construction of ${\bf E}^*$ can be summarized as follows: Given a Cantor set $E$ with $\{0,1\}\subset E\subset [0,1]$, and employing the standard Cantor construction of $E$, we are able to define two strictly increasing continuous functions, $\phi_0, \phi_1$, mapping from $[0,1]$ to itself, such that
\begin{equation}\label{ssc}
E=\phi_0(E)\cup \phi_1(E),
\end{equation}
with the union being disjoint. Furthermore, we prove a result of independent interest which provides
 a necessary and sufficient condition  for  $\phi_0, \phi_1$ to be bi-Lipschitz contraction maps on $[0,1]$;\footnote{In this case, we can extend $\phi_0, \phi_1$  to bi-Lipschitz contraction maps on $\R$ by simply letting $\phi_0, \phi_1$ be piecewise linear  on $(-\infty, 0] \cup [1,\infty)$ with slope $1/2$.} see Theorem~\ref{thm:bilip}.
Next, we will apply this theorem to a specially constructed  Cantor set ${\bf E}$. In this construction, the $n$-th order generating intervals of ${\bf E}$  have lengths given by $b_{\omega}M^{-n}$, where $M$ is a fixed large positive number,  and $b_{\omega}$ is an oscillatory function depending upon the coding $\omega$ of the generating interval. The precise formula for $b_{\omega}$ will be given in Section \ref{S-4};  the introduction of this special length function was inspired by the calculation of the dimensions of Bedford--McMullen carpets \cite{McMul84}.  We will show that the Cantor set ${\bf E}$ is generated by a bi-Lipschitz IFS consisting of two maps (see Proposition~\ref{p:BLIFS}) and has distinct Hausdorff and box dimensions (see Theorem~\ref{t1:dimE}). However, as shown in Theorem~\ref{t1:dimE}, the box dimension of ${\bf E}$ exists.  Finally, to separate the upper and lower box dimensions, we construct a new Cantor set ${\bf E}^*$, such that the $n$-th order generating intervals of ${\bf E}^*$  have lengths given by $b_{\omega}(\prod_{i=1}^nM_i)^{-1}$, where $(M_i)_{i=1}^\infty$ is a properly constructed sequence, taking only two values $M$ and $(M+1)$. Similarly, ${\bf E}^*$ is the attractor of  a bi-Lipschitz IFS consisting of two maps. Not only are the Hausdorff, upper box,  and  lower box dimensions distinct for ${\bf E}^*$, but the other two notions of dimension (the lower and Assouad dimensions) are also distinct from these three. It is worth pointing out that the corresponding IFSs of both ${\bf E}$ and ${\bf E}^*$ satisfy the strong separation condition, as stated in \eqref{ssc}.

It is possible to offer a simpler construction of attractors of bi-Lipschitz IFSs whose box dimension does not exist by using a class of symmetric Cantor sets. Due to the homogeneity of the length of the intervals in each iteration, we can obtain an easily-checkable sufficient condition for a symmetric Cantor set to be the attractor of a bi-Lipschitz IFS; see Proposition~\ref{prop:bilip1}. Meanwhile, it is well known that the box dimension of a symmetric Cantor set does not necessarily exist (see~\cite{FeWeW97}). Hence by Proposition~\ref{prop:bilip1} and the dimensional results in \cite{FeWeW97}, we can construct symmetric Cantor sets, which serve as the attractors of bi-Lipschitz IFSs, but their box dimension does not exist. However, in this construction, the Hausdorff dimension is always equal to the lower box dimension, so it does not offer a proof for Theorem~\ref{t:!bdim}. Nonetheless, as an interesting part of this simpler construction, we will prove that the contraction maps are non-differentiable at all points inside this Cantor set (see Proposition~\ref{prop-no-diff}).

\subsection{Further results}
While the main focus of this paper is the dimension theory of attractors of IFSs, as a by-product of our arguments we are able to construct bi-Lipschitz IFSs with other interesting properties.

\subsubsection{Non-exact dimensional measures}
\label{S-1.3.1}
Given a Borel probability measure $\mu$ on $\R^d$, we denote the lower and upper \textbf{local dimensions} of $\mu$ at a point $x$ in $\supp(\mu)$ by
\[
\underline{\dim}(\mu,x) = \liminf_{\delta \to 0} \frac{\log (\mu(B(x,\delta)))}{\log \delta}   , \qquad \overline{\dim}(\mu,x) = \liminf_{\delta \to 0} \frac{\log (\mu(B(x,\delta)))}{\log \delta},
\]
where $B(x,\delta)$ stands for the closed ball of radius $\delta$ centred at $x$.  If there exists a constant $c$ such that $\underline{\dim}(\mu,x) = \overline{\dim}(\mu,x) = c$ for $\mu$-a.e.~$x \in \supp(\mu)$, then $\mu$ is said to be \textbf{exact dimensional}.

Given an IFS $\{\phi_i\}_{i=1}^{\ell}$ of contraction maps on $\R^d$ and positive numbers $p_1,\dotsc,p_\ell > 0$ with $p_1 + \dotsb + p_{\ell} = 1$, it is well known \cite{Hutch81} that there exists a unique Borel probability measure $\mu$ on $\R^d$, called the \textbf{stationary measure} associated with
$\{\phi_i\}_{i=1}^{\ell}$ and $(p_i)_{i=1}^\ell$, satisfying
\[
\mu = \sum_{i=1}^{\ell} p_i (\phi_i)_* \mu,
\]
 where $f_* \mu$ denotes the push-forward of $\mu$  by $f$ (that is,  $f_* \mu(A) = \mu(f^{-1}(A))$ for all Borel sets $A\subset \R^d$).  The support $\supp(\mu)$ equals the attractor of the IFS.
The following question is natural:

\begin{ques}\label{ques:exactifs}
    Is every stationary measure associated with an IFS on $\R^d$ exact dimensional?
\end{ques}
Question~\ref{ques:exactifs} has been answered  affirmatively in the special cases of $C^1$ conformal IFSs~\cite{FenHu09} and affine IFSs~\cite{BaranyKaenmakiExact,FengExact}.
However, in what follows, we use a simple symmetric Cantor set construction to show that the answer is negative in general.
\begin{thm}\label{thm:notexact}
There exists an IFS consisting of two bi-Lipschitz contractions on $\R$ for which the $(1/2,1/2)$ stationary measure $\mu$ satisfies $$\underline{\dim}(\mu,x) = \lbdim E < \ubdim E = \overline{\dim}(\mu,x)$$ for all $x \in E$, where $E$ is the attractor of the IFS.  Hence, in particular, $\mu$ is not exact dimensional.
\end{thm}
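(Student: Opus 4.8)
The plan is to use the ``simpler construction'' via symmetric Cantor sets already flagged in the introduction. Fix constants $0 < a \le b < 1/2$ and a ratio sequence $(c_n)_{n\ge 1}$ taking values in $\{a,b\}$, and let $E$ be the associated symmetric Cantor set: $E = \bigcap_n E_n$, where $E_n$ is a union of $2^n$ closed intervals (the level-$n$ cylinders), each of length $\ell_n = \prod_{k=1}^n c_k$, obtained by deleting a symmetric central gap of relative length $1-2c_n$ from each level-$(n-1)$ interval. Since $(c_n)$ is bounded away from both $0$ and $1/2$, Proposition~\ref{prop:bilip1} applies and exhibits $E$ as the attractor of a bi-Lipschitz IFS $\{\phi_0,\phi_1\}$ on $\R$ satisfying the strong separation condition. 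The $(1/2,1/2)$ stationary measure $\mu$ is then the uniform Cantor measure, assigning mass $2^{-n}$ to each level-$n$ cylinder.

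I would first record the two-sided geometric estimates that drive everything. Because $c_n \in \{a,b\}$ with $a>0$ and $b<1/2$, within each generation the interval length $\ell_n$, the parent length $\ell_{n-1}$, and the deleted gaps are all comparable up to constants depending only on $a,b$; in particular, consecutive cylinders at the same level are separated by a gap comparable to their own length. A routine box-counting argument then shows that for $\delta \in [\ell_n,\ell_{n-1})$ one needs $\asymp 2^n$ intervals of length $\delta$ to cover $E$, so that, setting $s_n := n\log 2/\log(1/\ell_n)$, one has $\lbdim E = \liminf_n s_n$ and $\ubdim E = \limsup_n s_n$ (this is the content of \cite{FeWeW97} in the present setting). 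Arranging the two values $a$ and $b$ in consecutive blocks whose lengths grow sufficiently fast forces the Ces\`aro-type averages $\frac1n\sum_{k\le n}\log(1/c_k)$ to oscillate between $\log(1/b)$ and $\log(1/a)$, whence $\liminf_n s_n < \limsup_n s_n$ and the box dimension of $E$ does not exist.

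The heart of the argument is to show that the local dimensions of $\mu$ at \emph{every} point agree with these box dimensions. Fix $x \in E$ and $\delta \in [\ell_n,\ell_{n-1})$, and let $I_n(x)$ be the level-$n$ cylinder containing $x$. Since $\ell_n \le \delta$ we have $I_n(x) \subset B(x,\delta)$, giving $\mu(B(x,\delta)) \ge 2^{-n}$; conversely, since $\delta < \ell_{n-1}$ and neighbouring cylinders are separated by gaps comparable to $\ell_{n-1}$, the ball $B(x,\delta)$ meets at most a bounded number $C = C(a,b)$ of level-$n$ cylinders, whence $\mu(B(x,\delta)) \le C\,2^{-n}$. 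Thus $\log\mu(B(x,\delta))/\log\delta = (n\log 2 + O(1))/\log\delta$ uniformly in $x$, and as $\delta$ runs through $[\ell_n,\ell_{n-1})$ this quantity increases from $s_n$ at $\delta=\ell_n$ to a value asymptotic to $s_{n-1}$ as $\delta\to\ell_{n-1}^-$. Taking $\liminf$ and $\limsup$ over $\delta\to 0$ therefore yields $\underline{\dim}(\mu,x) = \liminf_n s_n = \lbdim E$ and $\overline{\dim}(\mu,x) = \limsup_n s_n = \ubdim E$ for all $x \in E$, which is the claimed equality; since these differ, $\mu$ is not exact dimensional.

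The main obstacle is the uniform two-sided ball-mass estimate $\mu(B(x,\delta)) \asymp 2^{-n}$ of the last paragraph: it is precisely the comparability of the gaps to the interval lengths --- guaranteed by keeping $c_n$ bounded away from $1/2$ (and from $0$) --- that collapses both the box-counting function and the measure of a ball onto the single homogeneous quantity $2^{-n}$ at scale $\ell_n$, and thereby forces the lower and upper local dimensions to coincide with the lower and upper box dimensions at every point simultaneously. Once this is in place, separating the two box dimensions via the block construction is routine.
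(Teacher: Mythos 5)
Your proposal is correct and follows essentially the same route as the paper: the paper proves Theorem~\ref{thm:notexact} by taking the symmetric Cantor set of Proposition~\ref{p:symboxnotexist} (ratios in $\{1/3,1/4\}$ arranged so that $\frac1n\sum_{k\le n}\log c_k$ oscillates), invoking Proposition~\ref{prop:bilip1} for the bi-Lipschitz property, and then running the ball-versus-cylinder comparison of Theorem~\ref{thm:symmetricnotexact} with the Shannon--McMillan--Breiman input replaced by the exact identity $\mu(I_\omega)=2^{-n}$, which is precisely why the conclusion holds for \emph{every} $x\in E$; your two-sided estimate $\mu(B(x,\delta))\asymp 2^{-n}$ for $\delta\in[\ell_n,\ell_{n-1})$ is the same mechanism as the paper's sandwich $E\cap I_{n+1}(x)\subseteq E\cap B(x,\delta)\subseteq E\cap I_n(x)$. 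One small correction: it is not true that Proposition~\ref{prop:bilip1} applies merely because $(c_n)$ is bounded away from $0$ and $1/2$ --- condition \eqref{n:biLipi1} requires $\sup_n \frac{c_n(1-2c_{n+1})}{1-2c_n}<1$, which fails for instance when $c_n=0.45$ and $c_{n+1}=0.1$; you need to choose $a,b$ so that $\frac{b(1-2a)}{1-2b}<1$ (the paper's choice $\{1/4,1/3\}$ does this), after which your argument goes through verbatim.
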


Moreover, in Section~\ref{subsec:exactproof}, we will use the Shannon--McMillan--Breiman theorem to prove a more general result (see Theorem~\ref{thm:symmetricnotexact}) that there are symmetric Cantor sets on which the natural projection of every ergodic shift-invariant measure with positive entropy is not exact dimensional.
Similar to Remark~\ref{rem:higherdims}, our construction can be adapted to give stationary measures for IFSs on $\R^d$, $d \geq 2$, which are not exact dimensional.

\subsubsection{Attractors of IFSs with positive Lebesgue measure and empty interior}
In \cite{PerSol}, Peres and Solomyak posed the question of whether there exist self-similar sets with positive Lebesgue measure but empty interior. Cs\"{o}rnyei et al.~provided an affirmative  answer to this question in \cite{CJPPS} by demonstrating the existence of  such sets  in $\mathbb{R}^{2}$. The problem still remains open for the one-dimensional case, i.e., whether such sets exist in $\mathbb{R}$. In this paper, we establish an analogous result for attractors of bi-Lipschitz IFSs.

\begin{thm}
\label{thm:emptyinterior}
There exists an IFS consisting of two bi-Lipschitz contractions on $\R$ whose attractor has  positive Lebesgue measure but empty interior.
\end{thm}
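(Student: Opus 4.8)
The plan is to realise a \emph{fat symmetric Cantor set} as the attractor. Any totally disconnected compact set is nowhere dense and hence has empty interior, so the only real content is to arrange \emph{positive Lebesgue measure} together with the bi-Lipschitz IFS structure; I would stay within the symmetric Cantor set framework so that Proposition~\ref{prop:bilip1} applies.

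Fix a ratio sequence $(r_n)_{n\ge1}$ with $r_n\in(0,1/2)$ and let $E$ be the symmetric Cantor set whose $2^n$ level-$n$ generating intervals each have length $\ell_n=\prod_{k=1}^n r_k$. Writing $r_n=(1-\epsilon_n)/2$, so that $\epsilon_n\in(0,1)$ is the relative length of the gap deleted at level $n$, one has
\[
\mathcal{L}(E)=\lim_{n\to\infty}2^n\ell_n=\prod_{n=1}^\infty(2r_n)=\prod_{n=1}^\infty(1-\epsilon_n),
\]
where $\mathcal{L}$ is Lebesgue measure. This product is positive exactly when $\sum_n\epsilon_n<\infty$. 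On the other hand $\epsilon_n>0$ forces a genuine gap at every level, so no nondegenerate subinterval survives all levels (any such interval would satisfy $\mathrm{length}\le\ell_n\to0$) and $E$ has empty interior automatically. Thus every summable sequence of positive $\epsilon_n$ already yields the two geometric properties; it remains to choose $(\epsilon_n)$ so that $E$ carries a bi-Lipschitz IFS.

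I would take $\epsilon_n=c\rho^n$ with $\rho\in(0,1)$ fixed and $c>0$ small enough that $c\rho<1$ (hence $\epsilon_n\in(0,1)$ and $r_n\in(0,1/2)$ for all $n$). Then $\sum_n\epsilon_n<\infty$, giving positive measure, while simultaneously the consecutive gap ratio $\epsilon_{n+1}/\epsilon_n\equiv\rho$ is constant. The point is that the two canonical maps $\phi_0,\phi_1$ which send $E$ onto its left and right halves, by shifting the Cantor coding and interpolating affinely across each complementary gap, have the following local behaviour: on each deleted gap $\phi_0$ is affine with slope $r_n\,\epsilon_{n+1}/\epsilon_n=r_n\rho$, while the level-$n$ intervals are scaled overall by $\ell_{n+1}/\ell_n=r_{n+1}$. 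With the geometric choice both families of ratios lie in the fixed interval $[r_1\rho,\,1/2)\subset(0,1)$, bounded away from both $0$ and $1$; this is precisely what is needed to verify the hypotheses of Proposition~\ref{prop:bilip1}, which then certifies that $E$ is the attractor of a bi-Lipschitz IFS $\{\phi_0,\phi_1\}$ consisting of two maps. Combining the three facts completes the proof.

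The main obstacle is the genuine tension between the requirements: positive measure forces $\epsilon_n\to0$, i.e.\ $r_n\to1/2$, so that the deleted gaps shrink to relative size zero, and one must ensure that the affine gap-slopes $r_n\,\epsilon_{n+1}/\epsilon_n$ do not degenerate to $0$ (which would destroy the lower bi-Lipschitz bound) or to $1$ (which would destroy contractivity) as a consequence. Geometric decay is exactly engineered to decouple these: summability of $\epsilon_n$ controls the measure, while constancy of $\epsilon_{n+1}/\epsilon_n$ controls the slopes, so the two requirements no longer conflict. Finally, as in Remark~\ref{rem:higherdims}, taking products with copies of the middle-third Cantor set transfers the statement to $\R^d$ for any $d\ge1$.
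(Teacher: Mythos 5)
Your proposal is correct and follows essentially the same route as the paper: a fat symmetric Cantor set with geometrically decaying relative gap lengths $\epsilon_n = 1-2r_n$, with positive measure from $\sum_n \epsilon_n < \infty$, empty interior from total disconnectedness, and the bi-Lipschitz property certified by Proposition~\ref{prop:bilip1} via the gap-ratio $r_n\epsilon_{n+1}/\epsilon_n$ being bounded away from $0$ and $1$. The paper's concrete choice $c_n = \tfrac12 - 2^{-(n+1)}$ is exactly the case $\epsilon_n = 2^{-n}$ of your one-parameter family.
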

  This result relates to the work of Bowen \cite{Bo75}, who constructed a $C^1$ horseshoe of positive Lebesgue measure for the first time.

\subsection{Dynamical interpretation}

There is a natural dynamical interpretation of our results. Corollary~\ref{cor:dynamical} below is a straightforward consequence of Theorem~\ref{t:!bdim}. It illustrates that the dimension theory of dynamical systems exhibits  significant differences when the expanding map has only Lipschitz regularity, as opposed to $C^1$ regularity. Indeed, if in Corollary~\ref{cor:dynamical} the map $f$ is further assumed to be $C^1$, then the dimension conclusions for the invariant set $E$ would contradict the results of  \cite{Barreira1996,GatPe97}.
\begin{cor}\label{cor:dynamical}
There exists a Lipschitz map $f \colon \R \to \R$, and $c>1$, $\delta > 0$, and $E \subset U \subset \R$ where $E$ is non-empty and compact and $U$ is bounded and open, such that the following three conditions hold:
\begin{itemize}
\item[(i)] for all $x \in U$ and $y \in (x-\delta,x+\delta)$ we have $|f(x)-f(y)| \geq c|x-y|$;
\item[(ii)] $E = f(E) = f^{-1}(E) \cap U$;
\item[(iii)] $\ldim E<\dim_{\mathrm H} E < \lbdim E < \ubdim E < \adim E$, where $\ldim$ and $\adim$ denote the lower and Assouad dimensions, respectively.
\end{itemize}
\end{cor}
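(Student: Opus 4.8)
The plan is to exhibit $E$ as the maximal invariant set (repeller) of a piecewise-expanding Lipschitz map whose two inverse branches are exactly the contractions of the IFS furnished by Theorem~\ref{t:!bdim}. Write $\Phi = \{\phi_0,\phi_1\}$ for that IFS and $E \subset [0,1]$ for its attractor. From the construction underlying Theorem~\ref{t:!bdim} the maps $\phi_0,\phi_1$ are strictly increasing bi-Lipschitz contractions, extended (as in the footnote) to bi-Lipschitz homeomorphisms of $\R$ with constants $0 < A' \le B' < 1$; the strong separation condition $E = \phi_0(E)\sqcup\phi_1(E)$ holds; and the five dimensions are all distinct, so that $\ldim E < \dim_{\mathrm H} E < \lbdim E < \ubdim E < \adim E$ using the general inequalities $\ldim \le \dim_{\mathrm H} \le \lbdim \le \ubdim \le \adim$. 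Setting $K_0 = \phi_0([0,1]) = [0,a]$ and $K_1 = \phi_1([0,1]) = [b,1]$ we have $a < b$, the open interval $(a,b)$ meets $E$ in nothing, and $\phi_i(E) \subset K_i$.

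First I would build $f$ from these inverse branches. Fix $\eta_1 \in (0,(b-a)/2)$ and set $V_0 = (-\eta_1, a+\eta_1)$ and $V_1 = (b-\eta_1, 1+\eta_1)$, which are disjoint. Define $f := \phi_0^{-1}$ on $V_0$ and $f := \phi_1^{-1}$ on $V_1$ (both branches are defined there, since the extended $\phi_i$ are homeomorphisms of $\R$), interpolate $f$ linearly across $[a+\eta_1,b-\eta_1]$, and continue $f$ by constants outside $[-\eta_1, 1+\eta_1]$; being continuous and piecewise Lipschitz with finitely many pieces, the resulting $f\colon\R\to\R$ is globally Lipschitz. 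Then pick $\eta_0 \in (0,\eta_1)$, set $U_0 = (-\eta_0, a+\eta_0)$, $U_1 = (b-\eta_0, 1+\eta_0)$, $U = U_0 \cup U_1$, $\delta = \eta_1 - \eta_0$, and $c = 1/B' > 1$; note $E \subset U$, $E$ is compact, and $U$ is bounded and open.

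It remains to verify (i)--(iii). For (i), if $x \in U_i$ and $|y - x| < \delta$ then $y \in V_i$, and since the extended $\phi_i$ satisfies $|\phi_i(u)-\phi_i(v)| \le B'|u-v|$ its inverse gives $|f(x)-f(y)| = |\phi_i^{-1}(x)-\phi_i^{-1}(y)| \ge (1/B')|x-y| = c|x-y|$. For (ii), $f(\phi_i(E)) = \phi_i^{-1}(\phi_i(E)) = E$ yields $f(E) = E$; conversely, if $x \in U_i$ and $f(x) \in E$ then $\phi_i^{-1}(x) \in E$, which forces $x \in \phi_i(E)$ when $x \in K_i$, while the points of $U_i \setminus K_i$ are sent outside $[0,1]$ and hence outside $E$. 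Because $(a,b)$ contains no point of $E$ we have $E \cap U_i = \phi_i(E)$, so $f^{-1}(E) \cap U = \phi_0(E) \cup \phi_1(E) = E$. Condition (iii) is immediate from the dimension chain recorded above.

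The one genuine difficulty, which the above is arranged to circumvent, is condition (i) for $x \in U$ close to $\partial U$ paired with $y$ just outside $U$. Across the gap the map $f$ must reverse direction (it falls from just above $1$ to just below $0$), so it has local extrema there and cannot be expanding on any neighbourhood of those points; a naive continuous extension would therefore violate (i). Interposing the buffer $\overline{U_i} \subset V_i$ and choosing $\delta = \eta_1 - \eta_0$ forces the entire window $(x-\delta,x+\delta)$ to stay within the single branch domain $V_i$ on which $f$ literally equals the expanding map $\phi_i^{-1}$, so the turning behaviour never enters the estimate. Keeping $\partial U$ strictly inside the gap $(a,b)$ similarly prevents the interpolating part of $f$ from creating spurious preimages of $E$ inside $U$, which is exactly what secures the maximal-invariance identity in (ii).
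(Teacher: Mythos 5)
Your proposal is correct and follows essentially the same route as the paper: build $f$ from the inverse branches $\phi_0^{-1},\phi_1^{-1}$ on neighbourhoods of $\phi_0([0,1])$ and $\phi_1([0,1])$, bridge the gap continuously, and shrink $U$ inside the branch domains by a buffer of width $\delta$ so that every window $(x-\delta,x+\delta)$ with $x\in U$ stays in a single expanding branch. The only (cosmetic) difference is that the paper uses explicit affine pieces of slope $2$ in the buffer zones, whereas you reuse the extended $\phi_i^{-1}$ there via the footnote's extension of $\phi_i$ to bi-Lipschitz contractions on $\R$.
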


Moreover, as a consequence of Theorem~\ref{thm:notexact}, there exist invariant measures for expanding Lipschitz dynamical systems which are not exact dimensional.
In a  related result, Pesin and Weiss \cite[Theorem~10]{PesinWeiss} constructed a H\"older continuous Axiom $A^{\#}$ homeomorphism of a compact subset of the plane, with positive topological entropy, for which the unique measure of maximal entropy is ergodic and has different upper and lower local dimension almost everywhere.
\begin{cor}\label{cor:dynamicalmeasure}
There exists a Lipschitz map $f \colon \R \to \R$, and $c>1$, $\delta > 0$, and $E \subset U \subset \R$ where $E$ is non-empty and compact and $U$ is bounded and open, and a Borel probability measure $\mu$ with $\supp(\mu) = E$, such that the following four conditions hold:
\begin{itemize}
\item[(i)] for all $x \in U$ and $y \in (x-\delta,x+\delta)$ we have $|f(x)-f(y)| \geq c|x-y|$;
\item[(ii)] $E = f(E) = f^{-1}(E) \cap U$;
\item[(iii)] $\mu$ is $f$-invariant and ergodic;
\item[(iv)] for all $x \in \supp(\mu)$ we have $\underline{\dim}(\mu,x) = \lbdim E < \ubdim E = \overline{\dim}(\mu,x)$, so $\mu$ is not exact dimensional.
\end{itemize}
\end{cor}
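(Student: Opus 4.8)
The plan is to realise $f$ as the common expanding inverse of the two contractions in the IFS of Theorem~\ref{thm:notexact}, exactly as in the proof of Corollary~\ref{cor:dynamical}, and then to take $\mu$ to be the stationary measure furnished by that theorem. Write $\Phi=\{\phi_0,\phi_1\}$ for the bi-Lipschitz IFS of Theorem~\ref{thm:notexact}, with attractor $E$ and constants $0<A\le B<1$; by construction $\Phi$ satisfies the strong separation condition, so $\phi_0(E)$ and $\phi_1(E)$ are disjoint compact sets and $\rho:=\dist(\phi_0(E),\phi_1(E))>0$. Since each $\phi_i\colon\R\to\R$ is a strictly increasing bi-Lipschitz bijection (using the extension in the footnote to Theorem~\ref{thm:bilip}), its inverse $\phi_i^{-1}$ is bi-Lipschitz with $|\phi_i^{-1}(u)-\phi_i^{-1}(v)|\ge B^{-1}|u-v|$, so $\phi_i^{-1}$ expands by a factor at least $c:=B^{-1}>1$.

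First I would build $f$, $U$ and $\delta$. Fix $\delta\in(0,\rho/6)$ and, for $i\in\{0,1\}$, let $V_i$ and $V_i'$ be the open $(\rho/3)$- and $(\rho/3+\delta)$-neighbourhoods of $\phi_i(E)$, respectively; the choice $\delta<\rho/6$ guarantees that $V_0'$ and $V_1'$ are disjoint. Set $U=V_0\cup V_1$, a bounded open set containing $E=\phi_0(E)\cup\phi_1(E)$, and define $f=\phi_i^{-1}$ on $V_i'$. On the disjoint union $V_0'\cup V_1'$ the map $f$ is Lipschitz (with constant at most $\max\{A^{-1},\operatorname{diam}(f(V_0'\cup V_1'))/\dist(V_0',V_1')\}$), so by the McShane extension theorem it extends to a Lipschitz map $f\colon\R\to\R$. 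For condition (i), take $x\in U$, say $x\in V_0$, and $y$ with $|x-y|<\delta$; then $y\in V_0'$, so $f$ equals $\phi_0^{-1}$ at both points and $|f(x)-f(y)|\ge c|x-y|$. For condition (ii), $f(\phi_i(E))=\phi_i^{-1}(\phi_i(E))=E$ gives $f(E)=E$, while $f^{-1}(E)\cap V_i=\{x\in V_i:\phi_i^{-1}(x)\in E\}=\phi_i(E)$ yields $f^{-1}(E)\cap U=E$.

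It remains to produce the measure. Let $\Sigma=\{0,1\}^{\N}$, let $\sigma$ be the shift, let $\nu$ be the $(1/2,1/2)$ Bernoulli measure, and let $\pi\colon\Sigma\to E$ be the coding map of $\Phi$; set $\mu=\pi_*\nu$, which is the $(1/2,1/2)$ stationary measure of Theorem~\ref{thm:notexact} and has $\supp(\mu)=E$. Since $\pi(\omega)\in\phi_{\omega_1}(E)$, we have $f(\pi(\omega))=\phi_{\omega_1}^{-1}(\pi(\omega))=\pi(\sigma\omega)$, i.e.\ $f\circ\pi=\pi\circ\sigma$. Hence $f_*\mu=(f\circ\pi)_*\nu=(\pi\circ\sigma)_*\nu=\pi_*\nu=\mu$, so $\mu$ is $f$-invariant; and because $\nu$ is $\sigma$-ergodic and $\pi$ intertwines the two systems, $\mu$ is $f$-ergodic, giving (iii). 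Finally, the lower and upper local dimensions of $\mu$ are purely metric quantities, independent of $f$, so condition (iv) is literally the conclusion of Theorem~\ref{thm:notexact}.

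The construction carries no serious obstacle once Theorem~\ref{thm:notexact} is in hand: the only points requiring care are making $f$ globally Lipschitz while keeping it locally expanding on $U$ --- which is exactly what the strong separation condition and the gap $\rho$ permit --- and verifying the semiconjugacy $f\circ\pi=\pi\circ\sigma$, so that invariance and ergodicity of $\mu$ transfer from the Bernoulli measure $\nu$.
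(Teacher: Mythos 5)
Your proposal is correct and follows essentially the same route as the paper: take the IFS and $(1/2,1/2)$ stationary measure from Theorem~\ref{thm:notexact}, define $f$ as the local inverse of $\phi_0,\phi_1$ near $\phi_0(E)$ and $\phi_1(E)$ and extend it to a global Lipschitz map (the paper does this by explicit piecewise-affine interpolation as in Corollary~\ref{cor:dynamical}, you via McShane --- a cosmetic difference), and transfer invariance and ergodicity from the Bernoulli shift through the coding map. Condition (iv) then follows from Theorem~\ref{thm:notexact} exactly as you say.
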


We finish this section by also recasting Theorem~\ref{thm:emptyinterior} in dynamical language. We will omit the proof of this statement as it is a repetition of the proofs of Corollaries \ref{cor:dynamical} and \ref{cor:dynamicalmeasure}.

\begin{cor}\label{cor:dynamicalinterior}
There exists a Lipschitz map $f \colon \R \to \R$, and $c>1$, $\delta > 0$, and $E \subset U \subset \R$ where $E$ is non-empty and compact and $U$ is bounded and open, such that the following three conditions hold:
\begin{itemize}
\item[(i)] for all $x \in U$ and $y \in (x-\delta,x+\delta)$ we have $|f(x)-f(y)| \geq c|x-y|$;
\item[(ii)] $E = f(E) = f^{-1}(E) \cap U$;
\item[(iii)] $E$ has empty interior and positive Lebesgue measure.
\end{itemize}
\end{cor}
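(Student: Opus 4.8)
The plan is to convert the bi-Lipschitz IFS supplied by Theorem~\ref{thm:emptyinterior} into an expanding dynamical system by inverting its two branches, exactly as in the proofs of Corollaries~\ref{cor:dynamical} and~\ref{cor:dynamicalmeasure}. Let $\Phi = \{\phi_0, \phi_1\}$ be the IFS from Theorem~\ref{thm:emptyinterior}, whose attractor $E$ has positive Lebesgue measure and empty interior, and recall that (as in~\eqref{ssc}) the two maps are strictly increasing homeomorphisms of $\R$ satisfying $E = \phi_0(E) \cup \phi_1(E)$ with the union disjoint, and bi-Lipschitz with constants $0 < A \le B < 1$. Writing $K_i := \phi_i([0,1])$, these are two disjoint compact intervals with $\phi_i(E) = E \cap K_i$, and hence $E \subset K_0 \cup K_1$.

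First I would fix $\delta_0 > 0$ small enough that the open $\delta_0$-neighbourhoods $U_0, U_1$ of $K_0, K_1$ remain disjoint, and define $f \colon \R \to \R$ by $f|_{U_i} = \phi_i^{-1}|_{U_i}$, extended to a globally Lipschitz map on $\R \setminus (U_0 \cup U_1)$ by any Lipschitz interpolation (e.g.\ piecewise linear, which is possible since $\overline{U_0}, \overline{U_1}$ are disjoint intervals). Since each $\phi_i$ is a bi-Lipschitz contraction, $\phi_i^{-1}$ is bi-Lipschitz with $|\phi_i^{-1}(u) - \phi_i^{-1}(v)| \ge B^{-1}|u-v|$, so $f$ expands by at least the factor $c := B^{-1} > 1$ on each $U_i$. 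Taking $U$ to be the union of the open $(\delta_0/2)$-neighbourhoods of $K_0, K_1$ and $\delta := \delta_0/2$ ensures that for every $x \in U$ the whole interval $(x-\delta, x+\delta)$ lies in the single set $U_i$ containing $x$, on which $f = \phi_i^{-1}$; this yields condition~(i). The genuinely important point here is that the expansion estimate must hold on a neighbourhood of $U$ rather than merely on $U$, which is precisely why $U$ is chosen strictly inside the region where $f$ agrees with a single inverse branch.

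For condition~(ii), the disjointness of $K_0, K_1$ does the work. On the one hand $f(E \cap K_i) = \phi_i^{-1}(\phi_i(E)) = E$, so $f(E) = f(E\cap K_0) \cup f(E \cap K_1) = E$. On the other hand, since $E \subset K_0 \cup K_1 \subset U$, we have $E \subset f^{-1}(E) \cap U$; conversely, if $x \in U$ lies in, say, $U_0$ and $f(x) = \phi_0^{-1}(x) \in E$, then $x \in \phi_0(E) = E \cap K_0 \subset E$, and likewise on $U_1$, so $f^{-1}(E) \cap U \subset E$. This gives $E = f(E) = f^{-1}(E) \cap U$, and condition~(iii) is then precisely the conclusion of Theorem~\ref{thm:emptyinterior}, since $E$ is left unchanged. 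The one point requiring care --- and the closest thing to an obstacle --- is confirming that the IFS of Theorem~\ref{thm:emptyinterior} can indeed be taken with increasing branches and disjoint first-level images (the strong separation condition), so that the inverse branches $\phi_i^{-1}$ are globally defined and the identity $f^{-1}(E)\cap U = E$ holds without spurious preimages; this, however, is built into the construction underlying Theorem~\ref{thm:emptyinterior}.
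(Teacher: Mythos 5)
Your proposal is correct and follows essentially the same route as the paper, which omits the proof of this corollary precisely because it is a repetition of the argument for Corollary~\ref{cor:dynamical}: invert the two increasing bi-Lipschitz branches of the IFS from Theorem~\ref{thm:emptyinterior} on disjoint neighbourhoods of the first-level intervals, extend by a Lipschitz interpolation, and shrink $U$ so that each $\delta$-ball around a point of $U$ stays inside a single inverse-branch region. The only (harmless) cosmetic difference is that the paper extends $f$ by explicit affine pieces of slope $2$ and lets $U$ overlap those pieces, whereas you keep $U$ strictly inside the inverse-branch regions; both yield conditions (i)--(iii).
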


\subsection{Open questions}
There are several natural questions for future study.
Recall that for our symmetric Cantor set construction, the contraction maps are non-differentiable at all points inside the Cantor set; see Proposition~\ref{prop-no-diff}.  Most likely, the contraction maps in the IFS constructed in Theorem~\ref{thm-5.1} are also non-differentiable at some or all points inside  the Cantor set. This raises the following question:
\begin{oques}
Is there an IFS on $\R^d$ consisting of differentiable contractions for which the box dimension of its attractor does not exist?
 \end{oques}
 Similarly, to the best of our knowledge, the versions of Questions~\ref{ques:hausdorffbox} and~\ref{ques:exactifs} for IFSs of differentiable contractions remain open.

 It is worth noting that for any given values $s,t\in (0,1]$ with $s\leq t$, there exists a bi-Lipschitz IFS whose attractor has lower and upper box dimensions  equal to $s$ and $t$, respectively; see Remark~\ref{rem-Cantor}. It would be interesting to investigate whether this result can be extended to all five notions of dimension. 

 Apart from these commonly used dimensions, there has been recent interest in studying parameterized families of dimensions which aim to interpolate between other notions of dimension~\cite{FraserInterpolation}. In particular, the intermediate dimensions lie between Hausdorff dimension and box dimension, and the Assouad spectrum lies between upper box dimension and Assouad dimension. It could be interesting to investigate the intermediate dimensions and Assouad spectrum of attractors of IFSs on $\R$, such as those  constructed in  Theorems~\ref{t1:dimE} and \ref{thm-5.1}, which exhibit distinct dimensions. It would also be interesting to study the Fourier dimension of these sets, which is likely to be positive, although such a study appears to be more challenging.

\subsection{Structure of the paper}

The paper is organized as follows. Section~\ref{S-2} recalls various notions of dimension, and summarizes general notation and conventions used in the paper. In Section~\ref{S-3}, we provide a sufficient condition for a Cantor set to be the attractor of a bi-Lipschitz IFS. In Section~\ref{S-4}, we construct a  bi-Lipschitz IFS whose attractor has distinct Hausdorff and box dimensions. In Section~\ref{S-5}, we restate Theorem~\ref{t:!bdim} as Theorem~\ref{thm-5.1} and provide a constructive proof.   In Section~\ref{Sec:simpler}, we present a simpler construction to generate symmetric Cantor sets, study the differentiability of the maps, and prove Theorems \ref{thm:notexact} and \ref{thm:emptyinterior}. In Section \ref{S-8}, we prove Corollaries \ref{cor:dynamical} and \ref{cor:dynamicalmeasure}.

\section{Notation and definitions of various dimensions}
\label{S-2}

First, let us recall the definitions of the following notions of dimension for subsets of $\R$: Hausdorff dimension, lower box dimension, upper box dimension, Assouad dimension, and lower dimension.

Let $E\subset \R$. The {\bf Hausdorff dimension} of  $E$ is defined by
\begin{equation*}\label{hausdorffdef}
\begin{aligned} \dim_\mathrm{H} E = \inf \{ s \geq 0 : &\mbox{ for every } \varepsilon >0 \mbox{ there exists a finite or countable cover } \\*
& \{U_1,U_2,\ldots\} \mbox{ of } E \mbox{ such that } \sum_i |U_i|^s \leq \varepsilon \},
\end{aligned}
\end{equation*}
where $|U|$ stands for the diameter of $U$.

 The  {\bf lower dimension} and {\bf upper box dimension} of  $E$ are defined respectively by
\[
\lbdim{E} = \liminf_{\delta\to0} \frac{\log N_{\delta}(E)}{-\log \delta}, \ \ \ \ubdim{E} = \limsup_{\delta\to0} \frac{\log N_{\delta}(E)}{-\log \delta},
\]
where $N_{\delta}(E)$ is the minimum number of intervals of diameter at most $\delta$ needed to cover $E$.
If $\underline{\dim}_{\mathrm B} E = \overline{\dim}_{\mathrm B} E$,  then we denote the common value by $\dim_{\mathrm B} E$ and call it the  {\bf box dimension} of $E$.

Moreover, the {\bf Assouad dimension} and {\bf lower dimension} of $E$ are defined respectively by
$$
\adim{E}=\limsup_{r\to 0}\sup_{R>0}\sup_{x\in E} \frac{\log  N_{Rr}(E\cap [x-R, x+R])}{\log(1/r)},
$$
and
$$
\ldim{E}=\liminf_{r\to 0}\inf_{0<R<1}\inf_{x\in E} \frac{\log N_{Rr}(E\cap [x-R, x+R])}{\log(1/r)}.
$$

It  always holds that $$\ldim E\le \hdim E\leq  \lbdim{E}\leq \ubdim{E}\leq \adim{E}\leq 1$$ for every $E\subset \R$; see e.g.~\cite{Fraser}.
     For further details on Hausdorff and box dimensions, see \cite{Falconer2014, Mattila1995},   and for  Assouad and lower dimensions, see \cite{Fraser}.

For the reader's convenience, we summarize in Table \ref{*table-1} the main notation and typographical conventions used in this paper.
\begin{table}
\centering
\caption{Main notation and conventions}
\label{*table-1}
\vspace{0.05 in}
\begin{footnotesize}
\begin{raggedright}
\begin{tabular}{p{1.5 in} p{4 in} }
\hline \rule{0pt}{3ex}
$B(x,r)$& Closed ball of radius $r$ centred at $x$\\
$\hdim$ & Hausdorff dimension (\S\ref{S-2})\\
$\ubdim,\;\lbdim$ & Upper and lower box dimensions (\S\ref{S-2})\\
$\adim,\;\ldim$ & Assouad dimension and lower dimension (\S\ref{S-2})\\
$\overline{\dim}(\mu,x),\;\underline{\dim}(\mu,x)$ & Upper and lower local dimensions of a measure $\mu$ at $x$ (\S\ref{S-1.3.1})\\
$\lfloor x\rfloor$ & Largest integer less than or equal to~$x$\\
$\Omega^\N$ & Full shift space over $\Omega\coloneqq \{0,1\}$\\
$\Omega^n$ & Collection of finite words over $\{0,1\}$ of length $n$\\
$\Omega^*$ & Collection of finite words over $\{0,1\}$\\
$[\omega]$ & Cylinder set in $\Omega^\N$ generated by a finite word $\omega$ (see \eqref{e:cyl})\\
$\omega^{-}$ & The prefix of a finite word $\omega$ obtained by removing its last symbol\\
 ${\mathsf L}_1(\omega)$ & Number of times the letter $1$ appears in $\omega$\\
  ${\mathsf N}_1(\omega)$ & Number of times the letter $1$ appears in $\omega_1\ldots\omega_{\lfloor\beta n\rfloor}$  for $\omega\in\Omega^n$\\
    ${\mathsf N}_2(\omega)$ & Number of times the letter $1$ appears in $\omega_{\lfloor\beta n\rfloor+1}\ldots\omega_{n}$  for $\omega\in\Omega^n$\\
 $\omega|n$ & $\omega|n \coloneqq \omega_1\ldots \omega_n$ for a word $\omega=(\omega_i)_{i=1}^k$ with $k\geq n$\\
 $I_\omega$, $\omega\in \Omega^*$ & Generating intervals of a Cantor set (\S\ref{S-3.1})\\
 $G_\omega$, $\omega\in \Omega^*$ & Gap intervals of a Cantor set (\S\ref{S-3.1})\\
 $|U|$ & Diameter of a set $U$\\
 $\pi\colon \Omega^\N\to E$ & Coding map associated with a Cantor set $E$ (\S\ref{S-3.2})\\
${\bf E}={\bf E}_{\beta,M}$ &  Cantor set corresponding to the length function $\ell(\cdot)$ given by \eqref{eq:length} \\
${\bf E}^*$ & Cantor set corresponding to the length function $\ell^*(\cdot)$ given by \eqref{e-ell*}  \\
$N_{\delta}(E)$ & Minimum number of intervals of length $\delta$ needed to cover $E$\\
$\#A$ & Cardinality of a finite set $A$\\
$H(\cdot)$ & Entropy function defined as in \eqref{e-entropy}\\
$D(\cdot)$ & A function defined as in \eqref{eq:DF}\\
$\Lambda(\rho)$ & Subset of $\Omega^*$ defined as in \eqref{e-lengthell}\\
$\Lambda^*(\rho),\;\Lambda_M(\rho),\; \Lambda_{M+1}(\rho)$ & Subsets of $\Omega^*$ defined as in \eqref{e-Lambda*M}\\
$\Lambda^{*,u}(\rho)$ & Subset of $\Omega^*$ defined as in \eqref{e-Lambda*u}\\
\hline
\end{tabular}
\end{raggedright}
\end{footnotesize}
\end{table}

\section{Cantor sets as attractors of bi-Lipschitz IFSs}
\label{S-3}

In this section, we  review the standard process for constructing a Cantor set. Furthermore, we provide a sufficient condition for a Cantor set to be the attractor of a bi-Lipschitz IFS; see Theorem~\ref{thm:bilip}.

We will adopt the standard multi-index notation for two-digit alphabets. Set $\Omega = \{0,1\}$. For $n\in \N$, let $\Omega^n$ be the $n$-fold Cartesian product of $\Omega$. Write $\Omega^0 = \{\varnothing\}$, where $\varnothing$ denotes  the empty word,  and $\Omega^{\ast} = \bigcup_{n=0}^{\infty} \Omega_n$. Let $\Omega^{\N}$ denote the countably infinite Cartesian product of $\Omega$. The concatenation of two words $\omega$ and $v$ is denoted by $\omega v$. If $\omega = \omega_1\ldots \omega_n\in\Omega^n$, then we denote by $\omega^- = \omega_1\ldots \omega_{n-1}$ the prefix of
$\omega$ obtained by removing its last symbol, and define
\begin{equation}\label{e:cyl}
[\omega] = \{ (v_i)_{i=1}^\infty \in \Omega^{\mathbb{N}}\colon v_i = \omega_i \mbox{ for } 1 \leq i \leq n \}.
\end{equation}
We call $[\omega]$ the {\bf cylinder set} in $\Omega^{\mathbb{N}}$ generated by $\omega$. As the length of $\omega$ is $n$, we also call $[\omega]$ a cylinder set of order $n$.

\subsection{Standard construction of  Cantor sets}
\label{S-3.1}
Recall that a subset of $\R$  is called a {\bf Cantor set} if it is compact, totally disconnected, and has no isolated points. We now review the standard process for constructing a Cantor set.  Let $I_{\varnothing} = [0,1]$.  Suppose that an interval $I_{\omega}$ has been constructed for some $\omega\in\Omega^{\ast}$. We decompose
\[
I_{\omega} = I_{\omega0}\cup G_{\omega} \cup I_{\omega1},
\]
where the union is disjoint,  $I_{\omega 0}$ and $I_{\omega1}$ are closed intervals, $G_{\omega}$ is an open interval,  the left endpoint of $I_{\omega0}$  coincides with the left endpoint of $I_{\omega}$ and  the right endpoint of $I_{\omega1}$  coincides with the right endpoint of $I_{\omega}$. Moreover, we assume that
\begin{equation}\label{eq-length-zero}
\lim_{n\to\infty} \max_{\omega\in\Omega^n}|I_\omega| = 0,
\end{equation}
 where we denote the length of an interval $I$ by $|I|$.
Define
\begin{equation}\label{eq_E}
E = \bigcap_{n=1}^{\infty} \bigcup_{\omega\in \Omega^n} I_{\omega}.
\end{equation}
Then $E$ is a Cantor set with $\{0,1\}\subset E\subset [0,1]$.
  Moreover,
\begin{equation}\label{eq-all}
[0,1] = E \cup \left(\bigcup_{\omega\in\Omega^{\ast}} G_{\omega}\right).
\end{equation}
We will call $\{I_{\omega}: \omega\in \Omega^{\ast}\}$ the {\bf generating intervals} of the Cantor set $E$ and $\{G_{\omega}: \omega\in\Omega^{\ast}\}$ the {\bf gaps} (or {\bf gap intervals}) of $E$. Notice that $G_{\omega}$, $\omega\in \Omega^{\ast}$,  are mutually disjoint. We remark that every Cantor subset of $[0,1]$ containing both $0$ and $1$ can be constructed in this way, and moreover, this Cantor is uniquely determined by the {\bf length function} $\omega\mapsto |I_\omega|$, $\Omega^*\to (0,1]$.

\subsection{A sufficient condition for  a Cantor set to be the attractor of a bi-Lipschitz IFS}
\label{S-3.2}
Now suppose that $E$ is a Cantor set defined by~\eqref{eq_E}.  We define the coding map $\pi \colon \Omega^{\N} \to E$ by
\begin{equation}
\label{e-pi}
\{\pi (\omega)\} = \bigcap_{n=1}^{\infty} I_{\omega_1\ldots \omega_n} \quad \mbox{for} \ \omega = (\omega_n)_{n=1}^\infty\in \Omega^{\N}.
\end{equation}
The map $\pi$ is one-to-one and surjective. For $x\in E$ and $n\in \N$, we write
\begin{equation}
\label{e-interval}
I_n(x) = I_{\omega_1\ldots \omega_n},
\end{equation}
where $\omega=(\omega_k)_{k=1}^\infty=\pi^{-1}(x)$ is the unique coding of $x$.

Next we introduce two maps $\phi_0$, $\phi_1$ from  $[0,1]$ to itself.
For $i=0,1$, we define $\phi_i:\; I_\varnothing=[0,1]\to I_i$ so that $\phi_i$ maps $G_{\omega}$ onto $G_{i\omega}$ for every $\omega\in \Omega^{\ast}$ as an affine map. More precisely, we define
\begin{equation}\label{eq:IFSmap}
\phi_i(x) = \left\{\begin{array}{ll}\frac{d-c}{b-a}(x-a)+c,  &  \text{if $x\in G_{\omega}$ for some $\omega\in\Omega^{\ast}$}, \\
\pi (i\omega),  &  \text{if $x = \pi (\omega)$ for some $\omega\in\Omega^{\mathbb N}$} ,
\end{array}\right.
\end{equation}
where we write $G_\omega=(a,b)$ and $G_{i\omega}= (c,d)$. By~\eqref{eq-all}, $\phi_i$ is well-defined and it maps $I_{\varnothing}$ onto $I_{i}$ for $i = 0,1$.  Moreover, it is readily checked that $\phi_0$ and~$\phi_1$ are strictly increasing and continuous. It is also straightforward to verify that
\begin{equation}
\label{e-Cantor}
 E=\phi_0(E)\cup\phi_1(E).
\end{equation}
Below we provide a necessary and sufficient condition for $\phi_i$, $i=0,1$, to be bi-Lipschitz contraction mappings on $[0,1]$.
 \begin{thm}\label{thm:bilip}
  Let $E$ be a Cantor subset of $[0,1]$ with generating intervals $\{I_\omega: \omega\in \Omega^*\}$ and gaps $\{G_\omega: \omega\in \Omega^*\}$. Let $\phi_i \colon [0,1]\to I_i$,  $i=0, 1$, be defined as above. Then for each $i\in \{0,1\}$, $\phi_i$ is a bi-Lipschitz contraction on $[0,1]$ if and only if
  \begin{equation}
  \label{e-thm2.1}
  0<\theta_* \coloneqq \inf_{\omega\in \Omega^*} \min\biggl\{ \frac{|I_{i\omega}|}{|I_\omega|}, \frac{|G_{i\omega}|}{|G_\omega|} \biggr\} \quad \mbox{and} \quad \theta^* \coloneqq \sup_{\omega\in \Omega^*} \max\biggl\{ \frac{|I_{i\omega}|}{|I_\omega|}, \frac{|G_{i\omega}|}{|G_\omega|} \biggr\} <1.
  \end{equation}
 \end{thm}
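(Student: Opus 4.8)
The plan is to prove that $\phi_i$ is bi-Lipschitz with the sharp constants $\theta_*$ and $\theta^*$, i.e.\ to establish $\theta_*\,(y-x)\le \phi_i(y)-\phi_i(x)\le \theta^*\,(y-x)$ for all $0\le x<y\le 1$, and conversely that the bi-Lipschitz property forces \eqref{e-thm2.1}. I first record the structural facts both directions use. By construction $\phi_i$ is continuous and strictly increasing, it is affine with slope $|G_{i\omega}|/|G_\omega|$ on each gap $G_\omega$, and it satisfies $\phi_i(G_\omega)=G_{i\omega}$ and $\phi_i(I_\omega)=I_{i\omega}$ for every $\omega\in\Omega^*$; the latter holds because $\phi_i$ maps $E\cap I_\omega=\pi([\omega])$ onto $E\cap I_{i\omega}=\pi([i\omega])$ and the gaps inside $I_\omega$ onto the corresponding gaps inside $I_{i\omega}$, so the continuous increasing map $\phi_i$ carries the interval $I_\omega$ onto $I_{i\omega}$. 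Given these facts, the \emph{only if} direction is immediate: if $A|u-v|\le|\phi_i(u)-\phi_i(v)|\le B|u-v|$ for some $0<A\le B<1$, then applying this to the endpoints of $I_\omega$ and using $\phi_i(I_\omega)=I_{i\omega}$ gives $|I_{i\omega}|/|I_\omega|\in[A,B]$, and applying it to the endpoints of $G_\omega$ gives $|G_{i\omega}|/|G_\omega|\in[A,B]$. Taking the infimum and supremum over $\omega$ yields $\theta_*\ge A>0$ and $\theta^*\le B<1$.

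For the \emph{if} direction, assume \eqref{e-thm2.1} and let $\mu_i$ be the non-atomic Borel measure on $[0,1]$ determined by $\mu_i([x,y])=\phi_i(y)-\phi_i(x)$. It suffices to prove $\theta_*\,(y-x)\le\mu_i([x,y])\le\theta^*\,(y-x)$ for every interval $[x,y]$. The main device is the level-$n$ partition
\[
\mathcal P_n=\{I_v:v\in\Omega^n\}\cup\{G_v:v\in\Omega^m,\ 0\le m<n\},
\]
which tiles $[0,1]$ up to a countable set of interval endpoints, negligible for both $\mu_i$ and Lebesgue measure. Each piece $P\in\mathcal P_n$ is a generating interval or a gap, so $\mu_i(P)/|P|$ equals $|I_{iv}|/|I_v|$ or $|G_{iv}|/|G_v|$ and hence lies in $[\theta_*,\theta^*]$. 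Writing $\mu_i([x,y])=\sum_{P\in\mathcal P_n}\mu_i([x,y]\cap P)$, every piece $P\subseteq(x,y)$ contributes exactly $\mu_i(P)\in[\theta_*|P|,\theta^*|P|]$, so the only difficulty is the at most two \emph{boundary pieces} containing $x$ or $y$ but not contained in $[x,y]$.

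The decisive point is a dichotomy for each boundary piece $P$. If $P$ is a gap, then $\phi_i$ is affine on $P$ with slope in $[\theta_*,\theta^*]$, so $\mu_i([x,y]\cap P)\in[\theta_*,\theta^*]\cdot\mathrm{Leb}([x,y]\cap P)$ \emph{exactly}, and $P$ contributes no error. If instead $P=I_v$ with $v\in\Omega^n$, then both $\mu_i([x,y]\cap P)\le\theta^*|I_v|$ and $\mathrm{Leb}([x,y]\cap P)\le|I_v|$ are controlled by $|I_v|\le\max_{w\in\Omega^n}|I_w|$, which tends to $0$ as $n\to\infty$ by \eqref{eq-length-zero}. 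Comparing the non-boundary pieces against Lebesgue measure and absorbing the at most two interval boundary pieces into an error of size at most $2\theta^*\max_{w\in\Omega^n}|I_w|$ gives
\[
\theta_*\,(y-x)-2\theta^*\max_{w\in\Omega^n}|I_w|\ \le\ \mu_i([x,y])\ \le\ \theta^*\,(y-x)+2\theta^*\max_{w\in\Omega^n}|I_w|.
\]
Letting $n\to\infty$ yields $\theta_*(y-x)\le\phi_i(y)-\phi_i(x)\le\theta^*(y-x)$, so $\phi_i$ is bi-Lipschitz with constants $\theta_*\le\theta^*$, and it is a contraction since $\theta^*<1$.

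I expect the main obstacle to be exactly the treatment of the boundary pieces: the crude bound $\mu_i([x,y]\cap P)\le\theta^*|P|$ is useless for a long gap straddling an endpoint, since such a gap need not shrink with $n$. The resolution is the dichotomy above, which is also where both hypotheses genuinely enter: the affineness of $\phi_i$ on gaps makes the local estimate exact on gap boundary pieces, while the hypothesis \eqref{eq-length-zero} that the generating intervals shrink makes the interval boundary pieces negligible in the limit.
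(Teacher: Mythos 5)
Your proposal is correct, and while the necessity direction coincides with the paper's (both apply the bi-Lipschitz bounds to the endpoints of $I_\omega$ and $G_\omega$ and use $\phi_i(I_\omega)=I_{i\omega}$, $\phi_i(G_\omega)=G_{i\omega}$), your sufficiency argument takes a genuinely different route. The paper fixes $J=(x,y)$, passes to the nearest points $x',y'\in E$, and proves a structural lemma (Lemma~\ref{lemma_structure}) showing that $(x',y')$ is \emph{exactly} the disjoint union of the maximal generating intervals $\{I_\omega\}_{\omega\in\mathcal U}$ contained in $J$ together with a family of gaps $\{G_\omega\}_{\omega\in\mathcal V}$; summing images term by term then gives $\theta_*(y-x)\le\phi_i(y)-\phi_i(x)\le\theta^*(y-x)$ with no error term and no limit. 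You instead sum over the uniform level-$n$ partition $\mathcal P_n$ and let $n\to\infty$, which trades the net-structure lemma for a limiting argument; the step that makes this work is precisely your dichotomy for the (at most two) boundary pieces --- affineness gives an exact local estimate on gap pieces, while generating-interval pieces at level $n$ have length at most $\max_{w\in\Omega^n}|I_w|\to0$ by \eqref{eq-length-zero}, so the error $2\theta^*\max_{w\in\Omega^n}|I_w|$ vanishes. Your version is somewhat more self-contained (no analogue of Lemma~\ref{lemma_structure} is needed, and the Lebesgue--Stieltjes measure $\mu_i$ is only a notational convenience for a finite telescoping sum), at the cost of an $\epsilon$-argument; the paper's version yields the sharp two-sided bound directly from an exact decomposition. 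Both establish the same sharp constants $\theta_*,\theta^*$.
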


\begin{proof} We first prove the necessity part of the theorem. Assume that $\phi_0,\phi_1$ are bi-Lipschitz on $[0,1]$, and fix $i\in \{0,1\}$.  Notice that $\phi_i$ maps $G_{\omega} \eqqcolon (a,b)$ onto $G_{i\omega} = (\phi_i(a), \phi_i(b))$ for each $\omega\in \Omega^\ast$.  Applying the bi-Lipschitz assumption on $\phi_i$ to the points  $a$ and $b$,  we see that
\begin{equation}\label{eq:bilip_G}
 0<\inf_{\omega\in \Omega^{\ast}} \left\{\frac{|G_{i\omega}|}{|G_\omega|} \right\} \le \sup_{\omega\in \Omega^{\ast}} \left\{  \frac{|G_{i\omega}|}{|G_\omega|} \right\} <1.
 \end{equation}
 On the other hand, for each $\omega\in \Omega^{\ast}$, $I_\omega = [\pi (\omega0^{\infty}), \pi (\omega1^{\infty})]$, where $j^{\infty}= jjj \dots $ for $j\in\{0,1\}$ and $\pi$ is defined as in~\eqref{e-pi}. Therefore,
 \[
 I_{i\omega} =  [\pi (i\omega 0^{\infty}), \pi (i\omega1^{\infty})] = \phi_i(I_\omega).
 \]
 Applying the bi-Lipschitz assumption on $\phi_i$ to the points $\pi (\omega0^{\infty})$ and $\pi(\omega1^{\infty})$, we see that~\eqref{eq:bilip_G} also holds with $G_\omega$ replaced by $I_\omega$. Hence,  $0<\theta_*\le \theta^*<1$, where $\theta_*$ and $\theta^*$ are defined as in \eqref{e-thm2.1}.

 In what follows, we prove the sufficiency part of the theorem. To this end, assume that $0<\theta_*\le \theta^*<1$. Our goal is to prove that
\begin{equation}\label{eq:claim}
   \theta_*\le \frac{\phi_i(y)-\phi_i(x)}{y-x} \le  \theta^* \quad \mbox{ for all distinct } x,y\in[0,1].
  \end{equation}
  To justify~\eqref{eq:claim}, we let $0\le x<y\le 1$ and  $J=(x,y)$. First, notice that if there exists $\omega\in\Omega^{\ast}$ such that  $x,y\in \overline{G_\omega}$, then by the linearity of $\phi_i$ on $G_{\omega}$ and our assumption that $0<\theta_*\le \theta^*<1$, it is clear that~\eqref{eq:claim} holds. Moreover, this statement is evidently true for $x = 0$ and $y = 1$.

We now assume that $x,y$ are not in the closure of some common gap and $0 \leq x < y \leq 1$, and moreover either $x \neq 0$ or $y \neq 1$. Then $J\cap E\neq \varnothing$. Hence, $J$ contains at least one generating interval that is not $I_{\varnothing}$ since $J$ is open. We now let
\[
{\mathcal U} = \{ \omega \in\Omega^{\ast}\colon I_\omega\subset J, \ \mbox{and} \  I_{\omega^-}\not\subset J\}
\]
(recall that $\omega^{-}$ denotes the word obtained by removing the last letter of $\omega$) and
\[
J' = J\setminus \left(\bigcup_{\omega \in\mathcal{U}} I_{\omega}\right).
\]
The following lemma records some important properties of the sets $\mathcal{U}$ and $J'$.

\begin{lem}\label{lemma_structure}
\begin{itemize}
\item[(i)] $I_{\omega}\cap I_{v} = \varnothing$ for all $\omega,v\in{\mathcal U}$ with $\omega \ne v$.
\item[(ii)]  $E\cap J'= \varnothing$.
\item[(iii)] Suppose that $x,y\in E$. Then for every $\omega \in \Omega^{\ast}$, either $G_{\omega} \subset J'$ or $G_{\omega}\cap J' = \varnothing$.
\item[(iv)] Suppose that $x,y\in E$. Then there exists ${\mathcal V}\subset \Omega^{\ast}$ (${\mathcal V}$ may be empty) such that
\[
J  =\left(\bigcup_{\omega \in {\mathcal U}} I_{\omega}\right) \cup \left(\bigcup_{\omega \in {\mathcal V}} G_{\omega} \right).
\]
\end{itemize}
\end{lem}

\begin{proof}
We first prove (i).  By the construction of generating intervals,  the collection of all generating intervals $\{I_{\omega}:\omega \in\Omega^{\ast}\}$ of $E$ has the following net property: for all $\omega ,\omega'\in \Omega^{\ast}$, either
\begin{equation}\label{eq_net}
I_{\omega'}\subset I_{\omega}, \ \mbox{or} \ I_{\omega}\subset I_{\omega'}, \ \mbox{or} \ I_{\omega'}\cap I_{\omega} = \varnothing.
\end{equation}
Moreover, $I_{\omega'}\subset I_{\omega}$ occurs if and only if $\omega$ is a prefix of $\omega'$. Now suppose on the contrary that  $I_\omega\cap I_{v} \ne \varnothing$ for some distinct $\omega,v\in\mathcal{U}$.  By the aforementioned properties, we may assume, without loss of generality, that  $\omega\in \Omega^m$ and $v\in \Omega^n$, where  $m>n$. Then, again, the net property~\eqref{eq_net} implies that $I_\omega\subset I_v$ and $v$ is a prefix of $\omega$. Consequently we have $I_{\omega^-}\subset I_v\subset J$ since $v$ is also a prefix of $\omega^-$. This contradicts the fact that $\omega\in {\mathcal U}$. Hence, (i) holds.

To prove (ii), suppose on the contrary that $E\cap J'\neq  \varnothing$.  Take $z\in E\cap J'$ and set $\pi^{-1}(z)=(v_n)_{n=1}^\infty$, where $\pi$ is defined as in~\eqref{e-pi}.  Then
\[
\{z\} = \bigcap_{n=1}^{\infty} I_{v_1\ldots v_n}.
\]
Since $J$ is open, $I_{v_1\ldots v_{n}}\subset J$ when $n$ is large enough. Let $n_0$ be the smallest integer so that
$I_{v_1\ldots v_{n_0}}\subset J$. Then $v_1\ldots v_{n_0}\in\mathcal{U}$. This means that  $I_{v_1\ldots v_{n_0}}\cap J' = \varnothing$ and thus $z\not\in J'$. This results in a contradiction.

We now prove (iii). Let $\omega\in \Omega^*$.  Since the endpoints of $J$ are in $E$ and $G_\omega$ does not intersect $E$ since they are the gaps. We have either $G_\omega\subset J$ or $G_\omega\cap J = \varnothing$. We only need to consider the case when $G_\omega\subset J$ and show that either $G_\omega\subset J'$ or $G_\omega\cap J'= \varnothing$. Suppose the conclusion is false, i.e.,  $G_\omega\not\subset J'$ and $G_\omega\cap J'\neq \varnothing$. Since $G_\omega\not\subset J'$, it follows that
\[
G_\omega\cap \left(\bigcup_{v\in {\mathcal U}} I_{v}\right)\ne\varnothing.
\]
Hence, there exists $v\in \mathcal{U}$ such that $G_\omega\cap I_{v}\ne\varnothing$. However, since the two endpoints of $I_{v}$ are in $E$ and $G_\omega\cap E=\varnothing$, we have $G_\omega\subset I_{v}$, which implies that  $G_\omega\cap J' = \varnothing$, and  this leads to a contradiction.

Finally, we prove (iv). Since  $[0,1]$ is the disjoint union of $E$ and $\bigcup_{\omega\in \Omega^{*}}G_{\omega}$ by \eqref{eq-all},  statement (ii) implies that $J'\subset \bigcup_{\omega\in\Omega^{\ast}} G_\omega$. Set
 	\[
 	{\mathcal V}=\{\omega\in \Omega^*: G_{\omega} \subset J'\}.
 	\]
 Now using (ii) and (iii), our assumption that $x,y\in E$, and the fact that $J'\subset \bigcup_{\omega\in\Omega^{\ast}} G_{\omega}$, we must have $J'=\bigcup_{\omega \in {\mathcal V}} G_\omega$. Hence
 	\[
 	J=\left(\bigcup_{\omega \in{\mathcal U}} I_{\omega}\right) \cup J'=\left(\bigcup_{\omega \in{\mathcal U}} I_{\omega}\right) \cup \left(\bigcup_{\omega \in {\mathcal V}} G_{\omega} \right).
 	\]
 	This completes the proof of the lemma.
\end{proof}

We now return back to the proof of Theorem~\ref{thm:bilip}.  Let
	\[ x'=\min\bigl\{ t\in E\colon t\ge x \bigr\} \quad\text{and}\quad y'=\max \bigl\{ t\in E\colon t\le y \bigr\}. \]
	Then $(x,x')\cap E=\varnothing$, $(y',y)\cap E=\varnothing$ and $x',y'\in E$.  By Lemma~\ref{lemma_structure}(iv),
\[
(x',y') =  \left(\bigcup_{\omega \in {\mathcal U}} I_{\omega}\right) \cup \left(\bigcup_{\omega \in {\mathcal V}} G_{\omega} \right),
\]
where  the union is disjoint by Lemma~\ref{lemma_structure}(i). Therefore,
\[
\phi_i(x',y')= \left(\bigcup_{\omega \in {\mathcal U}} I_{i\omega} \right)\cup \left(\bigcup_{\omega \in {\mathcal V}} G_{i\omega}\right).
\]
Decomposing the interval $(x,y)$ as the disjoint union
$$(x,y) = (x,x'] \cup (x',y')\cup [y',y)$$  and using the fact that each $\phi_i$ is increasing, we have
\[
\phi_i(y)-\phi_i(x) =\phi_i(y)-\phi_i(y')+ \sum_{\omega \in{\mathcal U}} |I_{i\omega}| + \sum_{\omega \in{\mathcal V}}|G_{i\omega}|+ \phi_i(x')-\phi_i(x)
\]
We remark that either $x=x'$ or $(x,x')$ is contained in a gap. Similarly, $y=y'$ or $(y,y')$ is contained in a gap. Using these observations, together with the linearity of $\phi_i$ on the gap intervals and the definition of $\theta_*$ (see \eqref{e-thm2.1}),  we have
\[
 \phi_i(y)-\phi_i(x) \geq \theta_* \left( (y-y')+ \sum_{\omega \in\mathcal{U}} |I_{\omega}| + \sum_{\omega \in \mathcal{V}}|G_{\omega}| + (x'-x)\right) = \theta_*(y-x).
\]
A similar argument also works for the upper bound of~\eqref{eq:claim}. This completes the proof of Theorem~\ref{thm:bilip}.
\end{proof}

\section{Bi-Lipschitz IFSs whose attractors have distinct Hausdorff and box dimensions}

\label{S-4}
In this section, we construct a concrete class of Cantor sets, demonstrate that they are the attractors of bi-Lipschitz IFSs, and show that they have distinct Hausdorff and box dimensions.

Throughout the rest of the paper, $\lfloor x \rfloor$ denotes the largest integer less than or equal to~$x$.

Let $a_0=1$, $a_1=2$, $\beta\in (0,1)$, and let $M\geq 100$. Define a length function $\ell\colon\Omega^*\to(0,1]$ by
\begin{equation}\label{eq:length}
	\ell(\varnothing) = 1 \quad\text{and}\quad \ell(\omega_1\ldots \omega_n) = \frac{a_{\omega_1}\dotsm a_{\omega_{\lfloor\beta n\rfloor}}}{(a_{\omega_1}\dotsm a_{\omega_n})^\beta}\cdot M^{-n}.
\end{equation}

 Here we adopt the convention that when $m=0$, the empty product  $a_{\omega_1}\dotsm a_{\omega_m}$ is equal to $1$. Let ${\bf E} = {\bf E}_{\beta,M}$ be the Cantor set generated by the length function~$\ell$, i.e., the generating intervals $\{I_\omega\}_{\omega\in\Omega^*}$ of ${\bf E}$ satisfy $|I_\omega|=\ell(\omega)$. In the remaining part of this section, ${\bf E}$ with boldface always denotes the Cantor set obtained from this length function.
It is easy to see that $\max_{\omega\in \Omega^{n}}|I_\omega| \to 0$ as $n \to \infty$. This shows that ${\bf E}$ is a well-defined Cantor set.

 The definition of $\ell(\omega)$ is inspired by the dimension calculations of Bedford--McMullen carpets in~\cite{McMul84}.
 Indeed, for a certain self-affine measure $\eta$ supported on a Bedford--McMullen carpet, the $\eta$-measure of an approximate square has an expression similar to $\ell(\omega)$; see \cite[Lemma~3]{McMul84}.

The following proposition shows that ${\bf E}$ is the attractor of a bi-Lipschitz IFS.
\begin{prop}\label{p:BLIFS}
Let $\beta\in (0,1)$ and $M\geq 100$. 	Let ${\bf E}={\bf E}_{\beta,M}$ be the Cantor set defined as above. Then the collection $\Phi=\{\phi_0,\phi_1\}$, where each map $\phi_i$ is defined via the Cantor construction of ${\bf E}$ as in \eqref{eq:IFSmap}, forms a bi-Lipschitz IFS whose attractor is ${\bf E}$.
\end{prop}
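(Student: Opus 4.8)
The plan is to deduce everything from Theorem~\ref{thm:bilip}. Since the identity ${\bf E} = \phi_0({\bf E})\cup\phi_1({\bf E})$ already holds by \eqref{e-Cantor}, it suffices to check, for each $i\in\{0,1\}$, that the quantities $\theta_*,\theta^*$ of \eqref{e-thm2.1} satisfy $0<\theta_*\le\theta^*<1$; the maps are then bi-Lipschitz contractions on $[0,1]$, and (as in the footnote after \eqref{ssc}) extend to bi-Lipschitz contractions on $\R$ with the same attractor ${\bf E}$. Thus the entire statement reduces to bounding, uniformly over $\omega\in\Omega^*$, the two families of ratios $|I_{i\omega}|/|I_\omega|=\ell(i\omega)/\ell(\omega)$ and $|G_{i\omega}|/|G_\omega|$ away from $0$ and away from $1$.

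First I would treat the generating intervals. Writing $\omega=\omega_1\cdots\omega_n$ and substituting \eqref{eq:length}, almost all factors of $\ell(i\omega)/\ell(\omega)$ cancel, and the outcome is governed solely by whether $\lfloor\beta(n+1)\rfloor$ equals $\lfloor\beta n\rfloor$ or $\lfloor\beta n\rfloor+1$ (the only possibilities since $\beta\in(0,1)$). A short computation gives $\ell(i\omega)/\ell(\omega)=a_i^{1-\beta}\,a_{\omega_{\lfloor\beta n\rfloor}}^{-1}M^{-1}$ in the first case and $\ell(i\omega)/\ell(\omega)=a_i^{1-\beta}M^{-1}$ in the second (the boundary case $\omega=\varnothing$ gives $a_i^{-\beta}M^{-1}$ and is checked directly). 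Because $a_0=1$ and $a_1=2$, each of these lies in the fixed interval $[\tfrac{1}{2M},\tfrac{2}{M}]$, which is contained in $(0,1)$ since $M\ge100$.

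The main work is the gap ratios, which first requires computing the gap lengths. Using $|G_\omega|=\ell(\omega)-\ell(\omega0)-\ell(\omega1)$ and once more splitting into the two floor-function cases, I would show that $|G_\omega|=c_\omega\,\ell(\omega)$ with $c_\omega=1-a'_\omega(1+2^{-\beta})M^{-1}$ for some $a'_\omega\in\{1,2\}$; hence $c_\omega\in(1-4/M,\,1-1/M)\subset(0,1)$ uniformly in $\omega$. (This simultaneously verifies that every gap has positive length, so that $\ell$ genuinely determines the Cantor set ${\bf E}$.) Consequently $|G_{i\omega}|/|G_\omega|=(c_{i\omega}/c_\omega)\cdot(\ell(i\omega)/\ell(\omega))$, where the factor $c_{i\omega}/c_\omega$ is bounded above and below by constants close to $1$, while $\ell(i\omega)/\ell(\omega)\in[\tfrac{1}{2M},\tfrac{2}{M}]$. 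Thus the gap ratios are also trapped in a fixed subinterval of $(0,1)$. Combining the two families of bounds yields $0<\theta_*\le\theta^*<1$, and Theorem~\ref{thm:bilip} completes the proof.

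The step I expect to be the main obstacle is the bookkeeping in the gap computation: one must carefully separate the two cases for $\lfloor\beta(n+1)\rfloor$ and verify that the jump of the floor function alters $c_\omega$ only by a bounded multiplicative factor, so that $c_\omega$ remains \emph{uniformly} inside $(0,1)$. It is precisely this uniformity (rather than merely $c_\omega>0$ for each individual $\omega$) that is needed to obtain $\theta_*>0$ and $\theta^*<1$ as required by Theorem~\ref{thm:bilip}.
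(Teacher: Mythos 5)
Your proposal is correct and follows essentially the same route as the paper: both verify the hypothesis \eqref{e-thm2.1} of Theorem~\ref{thm:bilip} by computing $\ell(i\omega)/\ell(\omega)$ case by case according to whether $\lfloor\beta(n+1)\rfloor$ jumps, trapping these ratios in $[\tfrac{1}{2M},\tfrac{2}{M}]$, and then controlling $|G_{i\omega}|/|G_\omega|$ by writing it as $\frac{|I_{i\omega}|}{|I_\omega|}$ times the ratio of the factors $1-|I_{\omega 0}|/|I_\omega|-|I_{\omega 1}|/|I_\omega|$ (your $c_{i\omega}/c_\omega$), which is pinched near $1$. The only cosmetic difference is that the case $\lfloor\beta(n+1)\rfloor=\lfloor\beta n\rfloor=0$ can occur for nonempty $\omega$ when $\beta$ is small, not only for $\omega=\varnothing$, but the resulting value $a_i^{-\beta}M^{-1}$ still lies in $[\tfrac{1}{2M},\tfrac{2}{M}]$, so nothing is affected.
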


The proof of Proposition~\ref{p:BLIFS} is an application of Theorem~\ref{thm:bilip} and will be given in Section~\ref{S-IFSattractor}.

Moreover, we are able to determine the precise values of the Hausdorff and  box dimensions of ${\bf E}$.  To state our result, we first introduce some notation. The entropy function $H\colon [0,1]\to \R$ is defined by
\begin{equation}
\label{e-entropy}
H(p) = -p\log p - (1-p) \log (1-p),
\end{equation}
with the convention that $0\log 0 = 0$. For $\lambda>\log 2$, define
\begin{equation}\label{eq:DF}
	D(\lambda) = \max_{(p,q)\in[0,1]^2} \frac{\beta H(p) + (1-\beta) H(q)}{\lambda + \beta(1-\beta)(q-p)\log 2}.
\end{equation}
 The maximum in \eqref{eq:DF} is attained because the function being maximized is continuous on the compact set  $[0,1]^2$.

 Now we are ready to present our result on the Hausdorff and box dimensions of ${\bf E}$.

\begin{thm}\label{t1:dimE}
	Let ${\bf E}={\bf E}_{\beta, M}$, where $\beta\in(0,1)$ and $M\ge 100$.  Then
	\begin{itemize}
		\item[(i)]  $\hdim {\bf E} = \frac{\log 2}{\log M}$;
		\item[(ii)]  $\bdim {\bf E} = D(\log M)$.
\item[(iii)] $\hdim {\bf E}<\bdim {\bf E}$.				
	\end{itemize}
\end{thm}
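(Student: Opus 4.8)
The plan is to compute the three quantities—$\hdim\mathbf E$, $\bdim\mathbf E$, and to verify the strict inequality—by exploiting the explicit form of the length function $\ell(\omega)$ in \eqref{eq:length}. The key observation is that $\ell(\omega_1\ldots\omega_n)$ factors into the ``Bedford--McMullen'' combinatorial weight $a_{\omega_1}\cdots a_{\omega_{\lfloor\beta n\rfloor}}/(a_{\omega_1}\cdots a_{\omega_n})^\beta$ times the scale factor $M^{-n}$. Since $a_0=1$ and $a_1=2$, the weight depends only on the counts of the letter $1$ in the two blocks $\omega_1\ldots\omega_{\lfloor\beta n\rfloor}$ and $\omega_{\lfloor\beta n\rfloor+1}\ldots\omega_n$, which by Table~\ref{*table-1} are $\mathsf N_1(\omega)$ and $\mathsf N_2(\omega)$. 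Explicitly $\ell(\omega)=2^{\mathsf N_1(\omega)}\cdot 2^{-\beta(\mathsf N_1(\omega)+\mathsf N_2(\omega))}M^{-n}$, so $-\log\ell(\omega)=n\log M-(1-\beta)\mathsf N_1(\omega)\log 2+\beta\,\mathsf N_2(\omega)\log 2$. This reduces all dimension computations to counting words with prescribed values of $(\mathsf N_1,\mathsf N_2)$, which is a product of two binomial coefficients, governed asymptotically by the entropy function $H$ via Stirling's formula.

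\textbf{Computing the Hausdorff dimension (part (i)).}
For the upper bound, the natural cover of $\mathbf E$ at level $n$ is by the $2^n$ generating intervals $\{I_\omega:\omega\in\Omega^n\}$. One estimates $\sum_{\omega\in\Omega^n}|I_\omega|^s$ and checks that with $s=\log 2/\log M$ the sum stays bounded (indeed the combinatorial weights contribute a subexponential factor after taking $s$-th powers with this choice of $s$, since the $M^{-ns}=2^{-n}$ exactly cancels the cardinality $2^n$; I would verify that $\sum_\omega (2^{\mathsf N_1-\beta(\mathsf N_1+\mathsf N_2)})^s$ grows subexponentially, which forces $\hdim\mathbf E\le\log2/\log M$). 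For the lower bound I would construct the natural measure $\nu=\pi_*(\text{Bernoulli}(1/2,1/2))$ assigning mass $2^{-n}$ to each order-$n$ interval, and apply the mass distribution principle: one must show $\liminf_{n}\log\nu(I_n(x))/\log|I_n(x)|\ge\log 2/\log M$ for $\nu$-a.e.\ $x$. Since $\nu(I_\omega)=2^{-n}$ while $|I_\omega|=\ell(\omega)$, this amounts to showing $\log 2/(-\frac1n\log\ell(\omega))\to\log 2/\log M$, i.e.\ that the weight term is negligible at scale $n$. By the strong law of large numbers, for a $\nu$-typical point both $\mathsf N_1(\omega)/n\to\beta/2$ and $\mathsf N_2(\omega)/n\to(1-\beta)/2$, so the weight term $\frac1n((1-\beta)\mathsf N_1-\beta\mathsf N_2)\log2\to 0$, giving $-\frac1n\log\ell(\omega)\to\log M$ as required. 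Care is needed because the local dimension of $\nu$ must be controlled using generating intervals rather than balls, but the bounded-overlap/bi-Lipschitz structure (Proposition~\ref{p:BLIFS}) makes this routine.

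\textbf{Computing the box dimension (part (ii)) and the strict inequality (part (iii)).}
For the box dimension I would estimate $N_\delta(\mathbf E)$ by choosing, for each $\delta$, the antichain of generating intervals $I_\omega$ with $|I_\omega|\approx\delta$ and counting them grouped by the value of $(\mathsf N_1(\omega),\mathsf N_2(\omega))=(pn,qn)$; the number of such words is $\binom{\lfloor\beta n\rfloor}{pn}\binom{n-\lfloor\beta n\rfloor}{qn}\approx\exp(n(\beta H(p)+(1-\beta)H(q)))$, while $-\log|I_\omega|\approx n(\log M+\beta(1-\beta)(q-p)\log2)$ after substituting the typical-count expressions (the constraint tying $\delta$ to $n$ introduces exactly the denominator appearing in \eqref{eq:DF}). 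Optimizing $\log N_\delta/(-\log\delta)$ over $(p,q)$ via a Lagrange/variational argument yields the ratio $D(\log M)$, and since the counts concentrate, the $\liminf$ and $\limsup$ agree, so the box dimension exists and equals $D(\log M)$. Finally, part (iii) follows by comparing the two formulas: evaluating the maximand in \eqref{eq:DF} at the Hausdorff-optimal point $p=q=1/2$ gives $\frac{\beta H(1/2)+(1-\beta)H(1/2)}{\log M}=\frac{\log 2}{\log M}=\hdim\mathbf E$, and the presence of the $\beta(1-\beta)(q-p)\log2$ term in the denominator means that a point with $q<p$ strictly increases the ratio, so the maximum $D(\log M)$ is strictly larger. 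I expect the main obstacle to be the box-dimension lower/upper matching: making rigorous that the optimal $(p,q)$ has $q<p$ (so the perturbation genuinely helps) and that the concentration of binomial counts lets one replace sums by their largest term uniformly in $\delta$, rather than just the combinatorial optimization itself.
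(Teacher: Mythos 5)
Your treatment of the box dimension (part (ii)) and of the strict inequality (part (iii)) follows essentially the paper's route: the paper likewise counts the antichain of generating intervals of length about $\delta$ grouped by the pair $({\mathsf N}_1,{\mathsf N}_2)$, controls the binomial coefficients by Stirling's formula (Lemma~\ref{lem-4.3}), gets the upper bound for $\ubdim$ by summing over the polynomially many groups and the lower bound for $\lbdim$ by fixing $(p,q)$ and using the single family $G_n$; and it proves (iii) exactly by perturbing $p=q=1/2$ to $q=1/2-\epsilon$ (Lemma~\ref{l:DF1}). One caveat for (iii): ``decreasing the denominator'' is not by itself a proof, since the numerator also decreases; you need the second-order expansion $H(1/2-\epsilon)=\log 2-2\epsilon^2+O(\epsilon^3)$ to see that the loss in the numerator is $O(\epsilon^2)$ while the gain in the denominator is of order $\epsilon$. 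Your lower bound for $\hdim{\bf E}$ via the $(1/2,1/2)$ Bernoulli measure and the law of large numbers is also the paper's argument.

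The genuine gap is in your upper bound for $\hdim {\bf E}$. The sum $\sum_{\omega\in\Omega^n}\ell(\omega)^s$ does \emph{not} grow subexponentially at $s=\log 2/\log M$: since the letters contribute independently to ${\mathsf N}_1$ and ${\mathsf N}_2$, it factors as $M^{-ns}\,(1+2^{s(1-\beta)})^{\lfloor\beta n\rfloor}(1+2^{-s\beta})^{\,n-\lfloor\beta n\rfloor}$, and by strict convexity of $t\mapsto\log(1+2^t)$ together with $\beta\, s(1-\beta)+(1-\beta)(-s\beta)=0$ one gets $\beta\log(1+2^{s(1-\beta)})+(1-\beta)\log(1+2^{-s\beta})>\log 2$ for every $s>0$. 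Hence at $s=\log 2/\log M$ the exponential growth rate equals $-\log 2+(\text{something}>\log 2)>0$, and the uniform level-$n$ cover only yields the strictly larger exponent solving $-s\log M+\beta\log(1+2^{s(1-\beta)})+(1-\beta)\log(1+2^{-s\beta})=0$. This is precisely the Bedford--McMullen phenomenon: the sharp Hausdorff upper bound cannot be extracted from any single uniform cover. The paper's proof instead rests on the pointwise statement \eqref{e-Mcmullen} (McMullen's Lemma~\ref{mcmullen-lemma}): for \emph{every} $\omega\in\Omega^{\N}$, $\limsup_n\bigl((a_{\omega_1}\dotsm a_{\omega_n})^\beta/(a_{\omega_1}\dotsm a_{\omega_{\lfloor\beta n\rfloor}})\bigr)^{1/n}\ge 1$, so that along an $\omega$-dependent subsequence $|I_n(x)|$ is essentially at most $M^{-n}$, giving $\liminf_n \log\mu(I_n(x))/\log|I_n(x)|\le\log 2/\log M$ at every point, after which the upper-bound direction of Billingsley's lemma (Lemma~\ref{Billingsley-lemma}) applies. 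That lemma is proved by a nontrivial telescoping argument along the orbit of $n\mapsto\lfloor\beta n\rfloor$, and it is the missing ingredient in your plan.
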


The proofs of parts (i), (ii), (iii) of  the above theorem will be given separately in Sections \ref{S-4.2}, \ref{S-4.3} and \ref{S-4.4}, respectively.

\subsection{The proof of Proposition \ref{p:BLIFS}}
\label{S-IFSattractor}

\begin{proof}[Proof of Proposition \ref{p:BLIFS}]

 We now prove that the Cantor set ${\bf E} = {\bf E}_{\beta,M}$, with length function given by~\eqref{eq:length}, is generated by a bi-Lipschitz IFS. To this end, we will show that the assumptions in Theorem~\ref{thm:bilip} are satisfied.

 By \eqref{eq:length},  a direct computation shows that for $\omega\in \Omega^n$ and $i\in \{0,1\}$, 
\begin{equation*} \label{eq-ratios-1}
\frac{|I_{i\omega}|}{|I_{\omega}|}  = \begin{cases}
		u_i M^{-1}, & \text{if $\lfloor\beta(n+1)\rfloor=\lfloor\beta n\rfloor=0$}, \\
		v_ia_{\omega_{\lfloor\beta n\rfloor}}^{-1}M^{-1}, & \text{if $\lfloor\beta(n+1)\rfloor=\lfloor\beta n\rfloor\ge1$}, \\
		v_i M^{-1}, & \text{if $\lfloor\beta(n+1)\rfloor=\lfloor\beta n\rfloor+1$},
	\end{cases}
	\end{equation*}
where $u_0=v_0=1$, $u_1 = 2^{-\beta}$ and $v_1 = 2^{1-\beta}$; and
\begin{equation*} \label{eq-ratios-2} \frac{|I_{\omega i}|}{|I_\omega|}  = \begin{cases}
		 s_i\cdot \frac{1}{M}, & \text{if $\lfloor\beta(n+1)\rfloor=\lfloor\beta n\rfloor$}, \\
		s_i\cdot a_{\omega_{\lfloor\beta n\rfloor+1}}\cdot \frac1{M}, & \text{if $\lfloor\beta(n+1)\rfloor=\lfloor\beta n\rfloor+1$},
	\end{cases}
		\end{equation*}
where $s_0 = 1$ and $s_1 = 2^{-\beta}$.  It follows that
\begin{equation} \label{eq-ratios-3}
  \frac{1}{2 M} \le  \frac{|I_{i\omega}|}{|I_w|} \le \frac{2}{M} \quad \mbox{ and }\quad \frac{1}{2 M} \le  \frac{|I_{\omega i}|}{|I_w|} \le \frac{2}{M}\end{equation}
 for all $\omega\in \Omega^{\ast}$  and $i\in \{0,1\}$. Consequently,
 \begin{equation*}
		\begin{aligned}
			\inf_{\omega\in\Omega^*}\min\biggl\{ \frac{|I_{0\omega}|}{|I_\omega|}, \frac{|I_{1\omega}|}{|I_\omega|} \biggr\} &\ge \frac{1}{2M}>0, \\
\sup_{\omega\in\Omega^*}\max\biggl\{ \frac{|I_{0\omega}|}{|I_\omega|}, \frac{|I_{1\omega}|}{|I_\omega|} \biggr\} &\le \frac{2}{M}<1.
		\end{aligned}
	\end{equation*}

It remains to show that an analogous result holds for the gap intervals. To this end, we note that $|G_\omega|=|I_\omega|-|I_{\omega0}|-|I_{\omega1}|$ and
	\[ \frac{|G_{i\omega}|}{|G_\omega|} = \frac{|I_{i\omega}|}{|I_\omega|}\cdot \frac{1-|I_{i\omega0}|/|I_{i\omega}|-|I_{i\omega1}|/|I_{i\omega}|}{1-|I_{\omega0}|/|I_\omega|-|I_{\omega1}|/|I_\omega|}. \]
	By~\eqref{eq-ratios-3}, we obtain
	\begin{equation*}
		\begin{aligned}
			\inf_{\omega\in\Omega^*} \min \biggl\{ \frac{|G_{0\omega}|}{|G_\omega|}, \frac{|G_{1\omega}|}{|G_\omega|} \biggr\} &\ge \frac{1}{2M} \cdot \frac{M-4}{M-1}>0, \\
			\sup_{\omega\in\Omega^*} \max \biggl\{ \frac{|G_{0\omega}|}{|G_\omega|}, \frac{|G_{1\omega}|}{|G_\omega|} \biggr\} &\le\frac{2}{M}\cdot \frac{M- 1}{M- 4}<1.
		\end{aligned}
	\end{equation*}
 Hence, the condition listed in \eqref{e-thm2.1} is verified.  This completes the proof.
\end{proof}

\subsection{Hausdorff dimension of ${\bf E}$}
\label{S-4.2}
 In this subsection, we prove Theorem~\ref{t1:dimE}(i). We will use the following version of Billingsley's lemma \cite[Lemma~1.4.1]{BisPe17} (from the remarks given after Lemma~1.4.1 in \cite{BisPe17}, we know that the lemma also holds when using the generating intervals instead of dyadic intervals).

\begin{lem}[Billingsley's Lemma] \label{Billingsley-lemma}
Let $\mu$ be a finite Borel measure on a Cantor set $E\subset [0,1]$ whose generating intervals are given by $\{I_{\omega}: \omega\in \Omega^{\ast}\}$. Suppose that
$$
\inf_{\omega\in\Omega^*}\min\biggl\{ \frac{|I_{\omega 0}|}{|I_\omega|}, \frac{|I_{\omega 1}|}{|I_\omega|} \biggr\} >0.
$$
Let   $A\subset E$ with $\mu (A)>0$. Suppose that
\begin{equation}
\label{e-Billingsley}
 \liminf_{n\to\infty} \frac{\log \mu(I_{n} (x))}{\log |I_{n}(x)|} \le\alpha\quad \mbox{for all }x\in A,
\end{equation}
where $I_n(x)$ is defined as in~\eqref{e-interval}. Then $\hdim A\le \alpha$. Moreover, if the inequality $\le$ in \eqref{e-Billingsley} is replaced by $\ge$, then $\hdim A \geq \alpha$.
\end{lem}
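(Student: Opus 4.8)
The plan is to prove the two inequalities separately, and to isolate the one structural consequence of the separation hypothesis that is actually needed. Writing $\kappa \coloneqq \inf_{\omega\in\Omega^*}\min\{|I_{\omega0}|/|I_\omega|,\,|I_{\omega1}|/|I_\omega|\}$, the hypothesis reads $\kappa>0$, and this immediately gives the \emph{bounded geometry} estimate $|I_{n+1}(x)|\ge \kappa\,|I_n(x)|$ for every $x\in E$ and $n\in\N$; equivalently, consecutive generating intervals along a branch never shrink by more than a fixed factor. Together with the net property \eqref{eq_net} of the generating intervals and $\max_{|\omega|=n}|I_\omega|\to0$ from \eqref{eq-length-zero}, this is the only geometric input I will use.

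For the upper bound (hypothesis $\liminf_n \log\mu(I_n(x))/\log|I_n(x)|\le\alpha$), I would argue by producing economical covers directly from the generating intervals, and here $\kappa>0$ is in fact not needed. Fix $\eta>0$. The $\liminf$ condition says that for each $x\in A$ there are arbitrarily large $n$ with $\mu(I_n(x))\ge |I_n(x)|^{\alpha+\eta}$ (the inequality flips because $\log|I_n(x)|<0$). Given $\delta>0$, collect all such intervals with diameter $\le\delta$; they cover $A$, and by the net property their maximal elements are pairwise disjoint and still cover $A$. Summing over this disjoint family, $\sum|I|^{\alpha+\eta}\le\sum\mu(I)\le\mu(E)<\infty$, whence the Hausdorff content satisfies $\mathcal H^{\alpha+\eta}_\delta(A)\le\mu(E)$ for every $\delta$, so $\mathcal H^{\alpha+\eta}(A)<\infty$ and $\hdim A\le\alpha+\eta$. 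Letting $\eta\to0$ gives $\hdim A\le\alpha$.

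For the lower bound (hypothesis with $\ge$), I would use the mass distribution principle. Fix $\eta>0$ and put $s=\alpha-\eta$. For each $x\in A$ there is $N(x)$ with $\mu(I_n(x))\le|I_n(x)|^{s}$ for all $n\ge N(x)$; setting $A_k=\{x\in A: N(x)\le k\}$, the sets $A_k$ increase to $A$, so some $A_k$ has positive (outer) measure, which I fix. The crux is to upgrade the estimate on canonical generating intervals to an estimate on arbitrary small intervals $U$: given $U$ with $|U|=r$ small and $x\in U\cap A_k$, bounded geometry locates a generating interval $I_n(x)$ with $r\le|I_n(x)|<r/\kappa$ and, for $r$ small enough, $n\ge k$ (so that $\mu(I_n(x))\le|I_n(x)|^s$). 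A short combinatorial estimate shows that $U$ can meet at most $C_0=C_0(\kappa)$ pairwise disjoint generating intervals of diameter in $[r,r/\kappa)$; taking the maximal elements of the family $\{I_n(x):x\in U\cap A_k\}$, which are disjoint by the net property, I obtain $\mu(U\cap A_k)\le C_0\,(r/\kappa)^{s}=C\,|U|^{s}$. The mass distribution principle then yields $\mathcal H^{s}(A_k)\ge\mu(A_k)/C>0$, so $\hdim A\ge \hdim A_k\ge s=\alpha-\eta$; letting $\eta\to0$ finishes the proof.

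I expect the main obstacle to be the conversion step in the lower bound: passing from control of $\mu$ on the canonical intervals $I_n(x)$ to control on arbitrary intervals $U$. This is exactly where $\kappa>0$ is essential --- it supplies both a generating interval of size comparable to $|U|$ (via $|I_{n+1}(x)|\ge\kappa|I_n(x)|$) and the uniform bound $C_0(\kappa)$ on the number of disjoint comparable-size generating intervals that an interval of length $r$ can touch. A secondary, purely measure-theoretic point is the continuity $\mu^*(A\cap B_k)\uparrow\mu^*(A)$ used to extract a positive-measure piece $A_k$, which holds because the sets $B_k=\{x\in E: \mu(I_n(x))\le|I_n(x)|^{s}\ \text{for all } n\ge k\}$ are Borel and increasing.
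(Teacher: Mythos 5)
Your proof is correct, but it is worth noting that the paper does not actually prove this lemma at all: it is quoted from Bishop--Peres \cite[Lemma~1.4.1]{BisPe17} together with the remark there that the dyadic-interval version adapts to the generating intervals of a Cantor set. What you have done is supply, from scratch, exactly the adaptation that the paper's citation implicitly relies on, and your account of where the hypothesis $\kappa>0$ enters is accurate: the upper bound needs only the net property of the generating intervals and $\max_{|\omega|=n}|I_\omega|\to 0$ (covers by maximal good generating intervals are admissible for Hausdorff content, with disjointness giving $\sum|I|^{\alpha+\eta}\le\sum\mu(I)\le\mu(E)$), while $\kappa>0$ is used solely in the lower bound to pass from the canonical intervals $I_n(x)$ to arbitrary intervals $U$, by selecting the largest $n$ with $|I_n(x)|\ge |U|$ (so that $|I_n(x)|<|U|/\kappa$, via $|I_{n+1}(x)|\ge\kappa|I_n(x)|$) and forcing $n\ge k$ once $|U|\le \kappa\min_{|\omega|<k}|I_\omega|$. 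Two small remarks. First, in both halves you should note why maximal elements of your families of generating intervals exist: along any containment chain the order strictly decreases and is bounded below, so every member sits inside a maximal one; maximality plus the net property then gives disjointness. Second, your counting constant can be taken independent of $\kappa$: if pairwise disjoint intervals each of length at least $r$ all meet an interval $U$ of length $r$, then all but the leftmost and rightmost are contained in $U$, so there are at most $3$ of them; the role of $\kappa$ is only to guarantee the upper length bound $|I^*|<r/\kappa$, hence $\mu(I^*)\le (r/\kappa)^{s}$. Your measure-theoretic point about extracting $A_k=A\cap B_k$ of positive outer measure is also handled correctly, since the $B_k$ are Borel (level-$n$ generating intervals are pairwise disjoint, so each condition $\mu(I_n(x))\le |I_n(x)|^{s}$ cuts out a finite union of sets $E\cap I_\omega$) and outer measure is continuous from below along increasing measurable sets.
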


As well as Billingsley's lemma, we need the following result due to McMullen \cite{McMul84}.

\begin{lem}\cite[Lemma~4]{McMul84} \label{mcmullen-lemma} Let $a_0 = 1$,  $a_1 = 2$, and $\beta\in (0,1)$.  Then
\begin{equation}
\label{e-Mcmullen}
	\limsup_{n\to\infty}\biggl( \frac{(a_{\omega_1}\dotsm a_{\omega_n})^\beta}{a_{\omega_1}\dotsm a_{\omega_{\lfloor\beta n\rfloor}}} \biggr)^{1/n} \ge 1 \quad\mbox{for all }\;\omega= (\omega_n)_{n=1}^{\infty}\in \Omega^{\N}.
	\end{equation}
Moreover, for every Bernoulli product measure $\nu$ on $\Omega^{\N}$,
\[
\lim_{n\to\infty}\biggl( \frac{(a_{\omega_1}\dotsm a_{\omega_n})^\beta}{a_{\omega_1}\dotsm a_{\omega_{\lfloor\beta n\rfloor}}} \biggr)^{1/n} = 1
\]
for $\nu$-a.e.~$\omega\in \Omega^{\N}$.
\end{lem}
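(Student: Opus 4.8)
The plan is to reduce everything to counting the letter $1$. Since $a_0=1$ and $a_1=2$, for any word we have $a_{\omega_1}\cdots a_{\omega_k}=2^{S_k}$, where $S_k$ denotes the number of indices $i\le k$ with $\omega_i=1$. Hence the quantity inside the limit equals $2^{(\beta S_n-S_{\lfloor\beta n\rfloor})/n}$, and since $t\mapsto 2^t$ is continuous and increasing, both assertions become statements about $g(n):=(\beta S_n-S_{\lfloor\beta n\rfloor})/n$: the estimate \eqref{e-Mcmullen} says $\limsup_{n\to\infty} g(n)\ge 0$ for \emph{every} $\omega$, while the ``moreover'' part says $g(n)\to 0$ for $\nu$-a.e.\ $\omega$.

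The measure-theoretic assertion is the routine one. Writing $p$ for the $\nu$-probability of the symbol $1$, the strong law of large numbers gives $S_n/n\to p$ almost surely; applying it along the subsequence $\lfloor\beta n\rfloor\to\infty$ and using $\lfloor\beta n\rfloor/n\to\beta$ yields $S_{\lfloor\beta n\rfloor}/n\to\beta p$ almost surely. Therefore $g(n)=\beta(S_n/n)-S_{\lfloor\beta n\rfloor}/n\to\beta p-\beta p=0$ $\nu$-almost everywhere, as required.

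The estimate \eqref{e-Mcmullen} is the substantive part, precisely because it must hold for every sequence, not merely almost surely. I would argue by contradiction: suppose $\limsup_n g(n)<0$, so there are $\delta>0$ and $N$ with $\beta S_n-S_{\lfloor\beta n\rfloor}\le-\delta n$ for all $n\ge N$. Fix a large $n$ and define the decreasing sequence $m_0=n$, $m_{j+1}=\lfloor\beta m_j\rfloor$. Rearranging the hypothesis at $m_j$ gives $S_{m_{j+1}}\ge\beta S_{m_j}+\delta m_j$; dividing by $m_{j+1}\le\beta m_j$ and using $m_j/m_{j+1}\ge 1/\beta$ produces the clean recursion
\[
\frac{S_{m_{j+1}}}{m_{j+1}}\ \ge\ \frac{S_{m_j}}{m_j}+\delta,
\]
valid for every $j$ with $m_j\ge N$. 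Since the densities $S_k/k$ all lie in $[0,1]$, iterating the recursion $J$ times forces $1\ge S_{m_J}/m_J\ge J\delta$, i.e.\ $J\le 1/\delta$. But from $m_j\ge\beta^j n-\tfrac{1}{1-\beta}$ the number of admissible steps before $m_j$ drops below $N$ is $J\approx\log(n/N)/\log(1/\beta)$, which tends to infinity as $n\to\infty$. This contradicts $J\le 1/\delta$, so $\limsup_n g(n)\ge 0$, which is \eqref{e-Mcmullen}.

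The conceptual heart of the argument is this last contradiction: a persistently negative $g$ would mean the density of $1$'s strictly increases by a fixed amount at each smaller dyadic-type scale $m_{j+1}=\lfloor\beta m_j\rfloor$, which is impossible since densities are bounded by $1$. The only genuinely delicate point — and the main (mild) obstacle — is the bookkeeping with the floor functions: one must verify $m_{j+1}=\lfloor\beta m_j\rfloor\le\beta m_j$ (so that the factor $\beta m_j/m_{j+1}\ge 1$ may be discarded and $\delta m_j/m_{j+1}\ge\delta$), and confirm that the count $J$ of iterations with all of $m_0,\dots,m_{J-1}\ge N$ really diverges with $n$. Both follow from the two-sided bound $\beta^j n-\tfrac{1}{1-\beta}\le m_j\le\beta^j n$, which I would record as a short preliminary estimate before running the iteration.
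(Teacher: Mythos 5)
Your proof is correct. The paper does not prove this lemma itself --- it cites McMullen --- but your argument (reduce to the density $S_n/n$ of $1$'s, suppose the limsup is negative, and iterate the scale map $n\mapsto\lfloor\beta n\rfloor$ so that the bounded density would have to increase by a fixed $\delta$ at every step) is precisely the telescoping contradiction the authors reproduce when they adapt McMullen's Lemma~4 to prove the analogous estimate \eqref{e-Mcmullen'} in Section~\ref{S-5}, and the strong-law argument for the almost-everywhere limit is likewise the standard route taken there via the Birkhoff ergodic theorem.
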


\begin{proof}[Proof of Theorem~\ref{t1:dimE}(i)] Let $\nu$ be the Bernoulli product measure on $\Omega^\N$ that assigns measure $2^{-n}$ to every cylinder set of order $n$. Let $\mu=\pi_*\nu$ be the push-forward of $\nu$ by the coding map $\pi$. Then $\mu$ is supported on ${\bf E}$ and
\[
\mu(I_\omega)= 2^{-n} \ \ \mbox{ for all } \omega\in \Omega^{n},\;  n\in \N.
\]
For $x\in {\bf E}$, let $(\omega_n)_{n=1}^\infty=\pi^{-1}(x)$ be the coding of $x$. Recalling the definition of $\ell(\cdot)$ in~\eqref{eq:length}, we see that
	\begin{equation}\label{eq:ALL}
\begin{split}
	\limsup_{n\to\infty} \frac{\log |I_n(x)|}{-n} &= \limsup_{n\to\infty} \frac{\log \ell(\omega_1\ldots \omega_n)}{-n} \\
&=\limsup_{n\to\infty}~\log \biggl( \frac{(a_{\omega_1}\dotsm a_{\omega_n})^\beta}{a_{\omega_1}\dotsm a_{\omega_{\lfloor\beta n\rfloor}}} \biggr)^{1/n} + \log M\\
&\geq \log M \qquad\qquad\mbox{(by \eqref{e-Mcmullen})}.
\end{split}	
\end{equation}
 Hence
	\begin{equation}
		\label{eq:Billingsley uniform upper bound}
	 \liminf_{n\to\infty} \frac{\log \mu(I_{n} (x))}{\log |I_{n}(x)|} \le \frac{\log 2}{\log M} \quad \mbox{ for all }x\in {\bf E}.
	\end{equation}
By Lemma~\ref{Billingsley-lemma}, $\hdim {\bf E} \le \log 2/\log M$. On the other hand, the second part of  Lemma~\ref{mcmullen-lemma} implies that the third inequality in \eqref{eq:ALL} is an equality for $\mu$-a.e.~$x$. Hence
\begin{equation*}
	 \liminf_{n\to\infty} \frac{\log \mu(I_{n} (x))}{\log |I_{n}(x)|} = \frac{\log 2}{\log M} \quad \mbox{ for $\mu$-a.e.~}x\in {\bf E}.
	\end{equation*}
By Lemma~\ref{Billingsley-lemma}, there exists a set $A\subset {\bf E}$ with $\mu(A) = \mu({\bf E})$ attaining the  Hausdorff dimension $\log 2/\log M$. Therefore $\hdim {\bf E} = \log 2/\log M$. This completes the proof of Theorem~\ref{t1:dimE}(i).
\end{proof}

\subsection{Box dimension of ${\bf E}$}
\label{S-4.3}
In this subsection, we prove Theorem~\ref{t1:dimE}(ii).

We begin with the following known result, which is a direct consequence of Stirling's formula. See e.g.~\cite[Lemma 2]{FengWang2005} for a proof.

\begin{lem}
\label{lem-4.3}
There exists $c>0$ such that for any integers $n,k$ with $n\geq 1$ and $0\leq k\leq n$,
\begin{equation}
\label{e-4.5}
n^{-c}\exp(nH(k/n))\leq \binom{n}{k}\leq n^c\exp(nH(k/n)),
\end{equation}
where $H(\cdot)$ is defined as in \eqref{e-entropy}.
\end{lem}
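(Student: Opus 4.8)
The final statement to prove is Lemma~\ref{lem-4.3}, the two-sided Stirling-type estimate for binomial coefficients in terms of the entropy function $H$. Let me sketch a proof.

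The plan is to bound $\binom{n}{k}$ both above and below by exponentials of $nH(k/n)$ up to polynomial factors in $n$, treating the boundary cases $k=0$ and $k=n$ separately since there $H(k/n)=0$ and $\binom{n}{k}=1$, so the claimed inequalities hold trivially with $c \geq 0$.

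For the main range $1\le k\le n-1$, the cleanest approach is via Stirling's formula in the form $m!=\sqrt{2\pi m}\,(m/e)^m e^{\theta_m}$ with $|\theta_m|\le 1/(12m)$, or more simply the crude bounds $\sqrt{2\pi m}\,(m/e)^m \le m! \le e\sqrt{m}\,(m/e)^m$ valid for all $m\ge1$. Writing $p=k/n$ and applying these bounds to $n!$, $k!$, and $(n-k)!$ in the expression $\binom{n}{k}=n!/(k!\,(n-k)!)$, one finds that the exponential part collapses exactly to $\exp(nH(p))$: indeed
\[
\frac{(n/e)^n}{(k/e)^k((n-k)/e)^{n-k}} = \frac{n^n}{k^k (n-k)^{n-k}} = \exp\bigl(n H(k/n)\bigr),
\]
after taking logarithms and recognising $-k\log(k/n)-(n-k)\log((n-k)/n)=nH(k/n)$. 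The remaining factors coming from the $\sqrt{2\pi m}$ and $e\sqrt{m}$ terms contribute a ratio of the form $C\sqrt{n/(k(n-k))}$, which for $1\le k\le n-1$ lies between $C/n$ and $C\sqrt{n}$; absorbing these polynomial prefactors into a factor $n^{\pm c}$ for a suitable absolute constant $c>0$ yields both the upper and lower bounds in~\eqref{e-4.5} simultaneously.

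The main (and only real) obstacle is bookkeeping the polynomial prefactors uniformly in $k$: one must check that $\sqrt{n/(k(n-k))}$ is controlled by $n^{\pm c}$ across the whole range $1\le k\le n-1$, including the extreme cases $k=1$ and $k=n-1$ where $k(n-k)=n-1$, so the square-root factor is of order $1$, well within $n^{\pm c}$. Since this is entirely elementary and the result is standard, I would keep the argument brief: cite Stirling's formula, perform the cancellation above, and note that the prefactors are polynomially bounded. As indicated in the statement, this is precisely the content of \cite[Lemma 2]{FengWang2005}, so it suffices to record the identity $n^n/(k^k(n-k)^{n-k})=\exp(nH(k/n))$ and refer to that source for the routine estimate of the prefactors.
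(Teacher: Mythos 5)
Your proposal is correct and follows exactly the route the paper indicates: the paper gives no proof of its own, stating only that the lemma ``is a direct consequence of Stirling's formula'' and citing \cite[Lemma 2]{FengWang2005}, and your argument---the identity $n^n/(k^k(n-k)^{n-k})=\exp(nH(k/n))$, Stirling applied to $n!$, $k!$, $(n-k)!$, separate treatment of $k\in\{0,n\}$, and absorption of the polynomially bounded prefactors into $n^{\pm c}$---is precisely that standard Stirling computation. No gaps; the bookkeeping of the prefactor $\sqrt{n/(k(n-k))}$ over $1\le k\le n-1$ is handled correctly.
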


Recall that the length function $\ell(\cdot)$ is defined as in \eqref{eq:length}. For $0<\rho<1$, define
  \begin{equation}
  \label{e-lengthell}
\Lambda(\rho) = \left\{\omega\in\Omega^{\ast}\colon \ell(\omega)\le \rho <\ell(\omega^{-})\right\}.
\end{equation}
The following lemma is also needed in our proof of Theorem~\ref{t1:dimE}(ii).

\begin{lem}
\label{lem-4.7}
\begin{itemize}
\item[(i)] The upper and lower box dimensions of ${\bf E}$ satisfy the following:
\begin{equation}\label{eq-upper-box-1}
\ubdim {\bf E} =  \limsup_{\rho\to 0} \frac{\log(\# \Lambda(\rho))}{\log (1/\rho)},\qquad \lbdim {\bf E} =  \liminf_{\rho\to 0} \frac{\log(\# \Lambda(\rho))}{\log (1/\rho)},
\end{equation}
where $\#A$ stands for the cardinality of $A$.
\item[(ii)] Assume that $0<\rho<\frac{1}{2M}$. If $\Omega^n\cap \Lambda(\rho)\neq\varnothing$ for some $n\in \N$, then
\begin{equation}
\label{e-Nrho}
\frac{\log(1/\rho)}{\log (2 M)}\leq n\leq \frac{2\log(1/\rho)}{\log (M/2)}.
\end{equation}
\end{itemize}
\end{lem}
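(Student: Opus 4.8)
The plan is to prove Lemma~\ref{lem-4.7} in two parts, establishing (ii) first since it is the key technical estimate that makes (i) possible to state cleanly in terms of the counting function $\#\Lambda(\rho)$.

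For part (ii), the idea is simply to exploit the uniform two-sided control on the ratio of consecutive generating interval lengths. From~\eqref{eq-ratios-3} in the proof of Proposition~\ref{p:BLIFS}, we know that $\frac{1}{2M} \le \frac{|I_{\omega i}|}{|I_\omega|} \le \frac{2}{M}$ for every $\omega \in \Omega^*$ and $i \in \{0,1\}$. Iterating this $n$ times and using $|I_\varnothing| = 1$ gives
\[
(2M)^{-n} \le \ell(\omega) \le (M/2)^{-n} \quad \text{for all } \omega \in \Omega^n.
\]
Now if $\omega \in \Omega^n \cap \Lambda(\rho)$, then by definition $\ell(\omega) \le \rho < \ell(\omega^-)$. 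The lower bound $\ell(\omega) \ge (2M)^{-n}$ combined with $\ell(\omega) \le \rho$ yields $(2M)^{-n} \le \rho$, i.e.\ $n \ge \frac{\log(1/\rho)}{\log(2M)}$. For the other inequality, I would use $\rho < \ell(\omega^-)$ together with the upper bound $\ell(\omega^-) \le (M/2)^{-(n-1)}$, giving $\rho < (M/2)^{-(n-1)}$, hence $(n-1)\log(M/2) < \log(1/\rho)$. A short estimate (using $\rho < \frac{1}{2M}$ to absorb the $-1$, since this forces $n$ to be at least $2$ and $\log(M/2) > 0$ for $M \ge 100$) then upgrades this to the claimed $n \le \frac{2\log(1/\rho)}{\log(M/2)}$. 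This is essentially a routine bookkeeping argument; the condition $M \ge 100$ guarantees all the logarithms are comfortably positive.

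For part (i), the standard strategy is to show that $\#\Lambda(\rho)$ is comparable, up to polynomial factors in $1/\rho$, to the covering number $N_\rho({\bf E})$. The collection $\{I_\omega : \omega \in \Lambda(\rho)\}$ consists of pairwise disjoint generating intervals (by the net property~\eqref{eq_net}) whose union covers ${\bf E}$, and each has length $\ell(\omega) \le \rho$; hence $N_\rho({\bf E}) \le \#\Lambda(\rho)$. Conversely, since the intervals $\{I_\omega : \omega \in \Lambda(\rho)\}$ are disjoint subintervals of $[0,1]$ each of length at least $\frac{\rho}{2M}$ (using $\ell(\omega) \ge \frac{1}{2M}\ell(\omega^-) > \frac{\rho}{2M}$ by~\eqref{eq-ratios-3} and the definition of $\Lambda(\rho)$), a volume argument bounds $\#\Lambda(\rho) \le 2M/\rho$, and more importantly each interval of length $\rho$ used in an optimal cover can meet only a bounded number of these disjoint intervals of length $\ge \frac{\rho}{2M}$, giving $\#\Lambda(\rho) \le C \cdot N_\rho({\bf E})$ for a constant $C$ depending only on $M$. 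Taking $\log$, dividing by $\log(1/\rho)$, and passing to $\limsup$ and $\liminf$ makes the bounded multiplicative constants vanish, yielding~\eqref{eq-upper-box-1}.

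**The main obstacle** I anticipate is the lower comparison $N_\rho({\bf E}) \ge c\,\#\Lambda(\rho)$ in part (i): one must argue carefully that the disjoint generating intervals indexed by $\Lambda(\rho)$ cannot be covered too efficiently, which requires the uniform lower bound on their lengths together with their disjointness and nestedness inside $[0,1]$. Everything else reduces to the elementary length estimates of~\eqref{eq-ratios-3} and logarithmic manipulation; the role of part (ii) is precisely to ensure that a single scale $\rho$ corresponds only to words of length within a bounded multiplicative window, which keeps the counting controlled.
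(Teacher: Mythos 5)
Your proposal is correct and follows essentially the same route as the paper: part (ii) via the uniform ratio bounds \eqref{eq-ratios-3} iterated to give $(2M)^{-n}\le \ell(\omega)\le (M/2)^{-n}$, and part (i) by noting that $\{I_\omega:\omega\in\Lambda(\rho)\}$ is a disjoint cover of ${\bf E}$ by intervals of length between $\rho/(2M)$ and $\rho$, so that $\#\Lambda(\rho)$ and the covering numbers agree up to multiplicative constants that vanish in the logarithmic limit. The only cosmetic difference is that the paper compares $\#\Lambda(\rho)$ with $N_{\rho/(2M)}({\bf E})$ (each interval of length $\rho/(2M)$ meets at most two of the $I_\omega$), whereas you compare directly with $N_\rho({\bf E})$ at the cost of a constant depending on $M$; both are fine.
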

\begin{proof}
Let $0<\rho<1$. By the definition $\Lambda(\rho)$, every element $x=(x_n)_{n=1}^\infty\in \Omega^\N$ (which can be viewed as an infinite word) has a unique initial prefix in $\Lambda(\rho)$. It follows that
$$
\Omega^\N=\bigcup_{\omega\in\Lambda(\rho)}[\omega],
$$
where the union is disjoint. As the coding map $\pi\colon \Omega^\N\to {\bf E}$ satisfies  $\pi (x)\in I_{x_1\ldots x_n}$ for each $x\in \Omega^\N$ and $n\geq 1$, it follows that  the sub-collection $\{I_\omega\colon \omega\in\Lambda(\rho)\}$ of generating intervals forms a disjoint cover of ${\bf E}$. Meanwhile by the second inequality in \eqref{eq-ratios-3} and the definition of $\Lambda(\rho)$, for each $\omega\in \Lambda(\rho)$,
\begin{equation}
\label{e-ellomega-length}
\frac{\rho}{2 M}<\frac{1}{2 M} \ell(\omega^-)\leq\ell(\omega)\leq \rho.
\end{equation}
This implies, in particular, that  the diameter of each element in the sub-collection $\{I_\omega\colon \omega\in\Lambda(\rho)\}$ does not exceed $\rho$, and each interval of length $\frac{\rho}{2 M}$ intersects at most two elements in this sub-collection.
Therefore,
 \begin{equation}
 \label{e-NE}
 N_{\rho}({\bf E})\le \#\Lambda (\rho) \quad\mbox{ and }\quad N_{\frac{\rho}{2 M}}({\bf E})\geq \frac{1}{2}\cdot\big(\#\Lambda (\rho)\big),
 \end{equation}
 where  $N_{\rho}({\bf E})$ is the minimum number of intervals of diameter at most $\rho$ needed to cover ${\bf E}$. It follows that
 $$\frac{1}{2}\cdot\big(\#\Lambda (2 M\rho)\big)\leq N_{\rho}({\bf E})\leq \#\Lambda (\rho)$$
 for each $0<\rho<(2 M)^{-1}$, which implies \eqref{eq-upper-box-1}. This proves (i).

To prove (ii), by  \eqref{eq-ratios-3},
$$
(2 M)^{-n}\leq \ell(\omega)\leq (M/2)^{-n} \quad \mbox{ for all }\omega\in \Omega^n.
$$
Hence, if $\omega\in \Omega^n\cap \Lambda(\rho)$, then by \eqref{e-ellomega-length},
$$
(2 M)^{-n}\leq \ell(\omega)\leq \rho\quad \mbox{ and }\quad (M/2)^{-n}\geq \ell(\omega)\geq \frac{\rho}{2 M},
$$
which implies
\begin{equation*}
\frac{\log(1/\rho)}{\log (2 M)}\leq n\leq \frac{\log (2 M \rho^{-1})}{\log (M/2)}\leq \frac{2\log(1/\rho)}{\log (M/2)},
\end{equation*}
where in the last inequality we have used the assumption $\rho\leq (2 M)^{-1}$. This completes the proof of (ii).
\end{proof}

Now we are ready to prove Theorem~\ref{t1:dimE}(ii).

\begin{proof}[Proof of Theorem \ref{t1:dimE}(ii)]
Write for brevity $D=D(\log M)$.   Below we divide our proof into two steps.

{\it Step 1. $\ubdim {\bf E}\leq D$}.  For $n\in \N$ and $\omega = \omega_1\ldots\omega_n\in \Omega^n$, write
\begin{equation}
\label{e-4split}
\begin{split}
{\mathsf N}_1(\omega) &=  \#\{1\le j\le \lfloor\beta n\rfloor\colon \omega_j = 1\},\\
 {\mathsf N}_2(\omega)&= \#\{\lfloor\beta n\rfloor+1\le j\le n\colon \omega_j = 1\}.
\end{split}
\end{equation}
Let $0<\rho<1$. We claim that if $\omega\in \Omega^n\cap \Lambda(\rho)$ for some $n\in \N$, then
\begin{equation}
\label{e-4.11}
\left( 2^{\beta(1-\beta)\left(\frac{{\mathsf N}_2(\omega)}{n-\lfloor\beta n\rfloor}-\frac{{\mathsf N}_1(\omega)}{\lfloor\beta n\rfloor}\right)} M\right)^n\leq \frac{2 M}{\rho}.
\end{equation}
To see this, let $\omega\in \Omega^n\cap \Lambda(\rho)$.  By \eqref{eq:length},
\begin{equation}\label{eq:length2}
\ell(\omega) = \frac{a_{\omega_1}\dotsm a_{\omega_{\lfloor\beta n\rfloor}}}{(a_{\omega_1}\dotsm a_{\omega_n})^\beta}\cdot  M^{-n} = 2^{(1-\beta){\mathsf N}_1(\omega)-\beta {\mathsf N}_2(\omega)}  M^{-n}.
\end{equation}
Writing $p=\frac{{\mathsf N}_1(\omega)}{\lfloor\beta n\rfloor}$ and $q=\frac{{\mathsf N}_2(\omega)}{n-\lfloor\beta n\rfloor}$,  we see that $p,q\in [0,1]$ and
\begin{align*}
\frac{2 M}{\rho}&\geq \ell(\omega)^{-1}\qquad\qquad \qquad\qquad\qquad\qquad\qquad\qquad\qquad\mbox{(by \eqref{e-ellomega-length})}\\
&=2^{\beta {\mathsf N}_2(\omega)-(1-\beta){\mathsf N}_1(\omega)}  M^{n}\qquad\qquad\qquad\qquad \qquad\qquad\mbox{(by \eqref{eq:length2})}\\
&=2^{\beta (n-\lfloor \beta n\rfloor)q-(1-\beta)\lfloor\beta n\rfloor p}  M^{n}\\
&=2^{n\beta (1-\beta)(q-p)} 2^{(\beta q+(1-\beta)p)(n\beta-\lfloor n \beta\rfloor)} M^{n}\\
&\geq 2^{n\beta (1-\beta)(q-p)} M^{n}.
\end{align*}
This proves \eqref{e-4.11}.

Now write
$$
{\mathcal N}_{\rho}=\left\{n\in \N\colon \Omega^n\cap \Lambda(\rho)\neq \varnothing\right\},
$$
and
$${\mathcal U}_n =  \{0, 1,\dotsc, \lfloor\beta n\rfloor\}\times \{0,\dotsc, n-\lfloor\beta n\rfloor\},\quad n\in \N.$$ Clearly, we have
\begin{equation}\label{eq-Lambda-rho}
\# \Lambda(\rho) = \sum_{n\in {\mathcal N}_{\rho}}\sum_{(N_1,N_2)\in {\mathcal U}_n} \#\left\{\omega\in\Omega^n\cap \Lambda(\rho)\colon {\mathsf N}_1(\omega) = N_1, {\mathsf N}_2(\omega) = N_2\right\}.
\end{equation}
  Observe that for $n\in {\mathcal N}_{\rho}$ and $(N_1,N_2)\in {\mathcal U}_n$,
\begin{align*}
\# &\left\{\omega\in \Omega^n \cap \Lambda(\rho)\colon {\mathsf N}_1(\omega) = N_1, {\mathsf N}_2(\omega) = N_2\right\} \\
&\le   ~\binom{\lfloor\beta n\rfloor}{N_1}\cdot \binom{n-\lfloor\beta n\rfloor}{N_2}\\
&\leq n^{2c}  \exp\left( \lfloor\beta n\rfloor H\left(\frac{N_1}{\lfloor\beta n\rfloor}\right)+ (n-\lfloor n\beta\rfloor) H\left(\frac{N_2}{n-\lfloor\beta n\rfloor}\right)\right) \qquad\mbox{(by \eqref{e-4.5})}\\
&\leq n^{2c}\cdot 2\cdot\exp\left(n\left( \beta  H\left(\frac{N_1}{\lfloor \beta n\rfloor}\right)+ (1-\beta) H\left(\frac{N_2}{n-\lfloor\beta n\rfloor}\right)\right)\right)\\
&\leq n^{2c}\cdot 2\cdot \exp\left(Dn\left( \beta (1-\beta) \left(\frac{N_2}{n-\lfloor\beta n\rfloor}-\frac{N_1}{\lfloor\beta n\rfloor}\right)\log 2 +\log M\right)\right)\\
&=n^{2c}\cdot 2\cdot  \left(2^{\beta (1-\beta)\left(\frac{N_2}{n-\lfloor\beta n\rfloor}-\frac{N_1}{\lfloor\beta n\rfloor}\right)} M\right)^{Dn}\\
&\leq n^{2c}\cdot 2\cdot \left(\frac{2 M}{\rho}\right)^{D}\qquad\qquad\qquad\qquad\qquad\qquad\qquad\qquad  \mbox{(by \eqref{e-4.11})},
\end{align*}
where in the fourth inequality, we have used the definition of $D=D(\log M)$ (see \eqref{eq:DF}).
For $0<\rho\leq (2 M)^{-1}$, plugging the above inequality into \eqref{eq-Lambda-rho} and using the facts that $\#{\mathcal U}_n\leq n^2$ and $$n\leq \frac{2\log(1/\rho)}{\log (M/2)}$$ for each $n\in \mathcal N_\rho$  (see \eqref{e-Nrho}),
we obtain
$$
\# \Lambda(\rho)\leq \left(\frac{2\log(1/\rho)}{\log (M/2)}\right)^{3+2c} \cdot 2 \cdot \left(\frac{2 M}{\rho}\right)^{D}.
$$
Combining this with \eqref{eq-upper-box-1} yields that $\ubdim {\bf E}\leq D$.
\medskip

{\it Step 2:  $\lbdim {\bf E}\geq D$}.  Let $p,q\in (0,1)$ and
write $$\gamma=\gamma_{p,q}=\log M+\beta(1-\beta)(q-p)\log 2.$$
For $n\in \N$, define
$$
G_n=\left\{\omega \in \Omega^n\colon {\mathsf N}_1(\omega)=\lfloor p\beta n\rfloor \mbox{ and } {\mathsf N}_2(\omega) =\lfloor q(1-\beta)n\rfloor\right\},
$$
where ${\mathsf N}_1(\omega),{\mathsf N}_2(\omega)$ are defined as in \eqref{e-4split}. By~\eqref{eq:length2},  for each $\omega\in G_n$,
$$
 \delta_n: = \ell(\omega) = e^{-n \left(\log M-(1-\beta)\frac{\lfloor p\beta n\rfloor }{n}\log 2+\beta \frac{\lfloor q(1-\beta)n\rfloor}{n}\log 2\right)}   = e^{-n(\gamma+ \epsilon_n)}
$$
where
\[
\epsilon_n = \beta(1-\beta) \log 2 \cdot  \left(\left(\frac{\lfloor q(1-\beta)n\rfloor }{(1-\beta)n}-q\right)-\left(\frac{\lfloor p\beta n\rfloor}{\beta n}-p\right)\right).
\]
  As $\left|\frac{\lfloor p\beta n\rfloor}{\beta n}-p\right|\le\frac{1}{\beta n}$ and $\left|\frac{\lfloor q(1-\beta)n\rfloor}{(1-\beta)n}-q\right|\le\frac{1}{(1-\beta) n}$,
  \[
  |\epsilon_n|\le \frac{(1-\beta)\log 2}{n}+\frac{\beta \log 2}{n}= \frac{\log 2}{n} .
  \]
Since $\delta_n=e^{-n(\gamma+ \epsilon_n)}$, this shows that
\[
\frac12e^{-n\gamma}\leq \delta_n\leq 2e^{-n\gamma}.
\]
Note that the intervals in the collection $\{I_\omega \colon \omega\in G_n\}$ are disjoint, all have the same length $\delta_n$, and each contains points in ${\bf E}$.
So each interval of length $\delta_n$ intersects at most two elements in this collection. It follows that
\[
N_{\delta_n}({\bf E}) \ge \frac{1}{2}(\#G_n)  = \frac{1}{2}\binom{\lfloor\beta n\rfloor}{\lfloor p\beta n\rfloor}\cdot \binom{n-\lfloor\beta n\rfloor}{\lfloor q(1-\beta)n\rfloor}.
  \]
By \eqref{e-4.5}, for all  sufficiently large $n$,
\begin{align*}
\frac{1}{2}& \binom{\lfloor\beta n\rfloor}{\lfloor p\beta n\rfloor}\cdot \binom{n-\lfloor\beta n\rfloor}{\lfloor q(1-\beta)n\rfloor}\\ &\ge \frac{1}{2}n^{-2c} \cdot
\exp\left(\lfloor \beta n\rfloor H\left(\frac{\lfloor p\beta n\rfloor}{\lfloor \beta n\rfloor}\right)+(n-\lfloor\beta n\rfloor)H\left(\frac{\lfloor q(1-\beta)n\rfloor}{n-\lfloor \beta n\rfloor}\right)\right)\\
&\geq n^{-3c} \exp(n(\beta H(p)+(1-\beta)H(q))),
\end{align*}
where $c$ is the constant in \eqref{e-4.5}, and in the second inequality we have used the estimates
$$
\left|H\left(\frac{\lfloor p\beta n\rfloor}{\lfloor \beta n\rfloor}\right)-H(p)\right|= O\left(\frac{1}{n}\right), \qquad \left|H\left(\frac{\lfloor q(1-\beta)n\rfloor}{n-\lfloor\beta n\rfloor}\right)-H(q)\right|= O\left(\frac{1}{n}\right),
$$
which follow from the finiteness of the derivatives of $H$ at $p$ and $q$. Here we use the standard big $O$ notation.
Hence, for all sufficiently small $\delta>0$, by choosing  $n$ such that $\delta\in(\frac12e^{-(n+1)\gamma},\;\frac12e^{-n\gamma})$, we obtain
\[
\frac{\log N_{\delta}({\bf E})}{-\log \delta}\ge \frac{\log N_{\delta_n}({\bf E})}{-\log (\frac12e^{-(n+1)\gamma})} \ge \frac{\log (\frac{1}{2}\#G_n)}{(n+1)\gamma+\log 2}. \]
Using our estimate for $\#G_n$, it follows that
\[\frac{\log N_{\delta}({\bf E})}{-\log \delta}\ge \frac{-3c\log n+n(\beta H(p)+(1-\beta)H(q))}{(n+1)\gamma+\log 2}.
\]
Taking the lower limit gives
\[
\lbdim {\bf E}\ge \frac{\beta H(p)+ (1-\beta)H(q)}{\gamma}  = \frac{\beta H(p)+(1-\beta)H(q)}{\log M+\beta(1-\beta) (q-p)\log 2}.
\]
Hence, $\lbdim {\bf E}\ge\Phi (p,q)$ for all $(p,q)\in (0,1)^2$, where
\[
\Phi(p,q) = \frac{\beta H(p)+(1-\beta)H(q)}{\log M+\beta(1-\beta) (q-p)\log 2}.
\]
Noting that $\Phi$ is a continuous function on $[0,1]^2$ and $D = \max_{[0,1]^2}\Phi(p,q)$,  we obtain that  $\lbdim {\bf E}\ge D$. This completes our proof.
\end{proof}

\begin{rem}
\label{rem-4.4}
By Theorem~\ref{t1:dimE}(ii) and \eqref{eq-upper-box-1},
$$
\bdim {\bf E}=\lim_{\rho\to 0} \frac{\log (\#\Lambda(\rho))}{\log (1/\rho)}.
$$
\end{rem}

\subsection{The proof of Theorem \ref{t1:dimE}(iii)}
\label{S-4.4}
We begin with a simple lemma.
\begin{lem}\label{l:DF1}
Let $D(\cdot)$ be defined as in \eqref{eq:DF}. Then for all  $\lambda > \log 2$,
\[
D(\lambda) > \frac{\log 2 }{ \lambda}.
\]
\end{lem}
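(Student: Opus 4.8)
The statement to prove is that $D(\lambda) > \frac{\log 2}{\lambda}$ for all $\lambda > \log 2$, where
\[
D(\lambda) = \max_{(p,q)\in[0,1]^2} \frac{\beta H(p) + (1-\beta) H(q)}{\lambda + \beta(1-\beta)(q-p)\log 2}.
\]
The plan is to exhibit a single well-chosen pair $(p,q) \in [0,1]^2$ at which the ratio defining $D(\lambda)$ strictly exceeds $\frac{\log 2}{\lambda}$; since $D(\lambda)$ is the maximum over all such pairs, this suffices. The natural first candidate is $(p,q) = (1/2, 1/2)$, which maximizes both entropy terms simultaneously. At this point the denominator collapses because $q - p = 0$, giving denominator $\lambda$, and the numerator becomes $\beta \log 2 + (1-\beta)\log 2 = \log 2$. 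Thus the ratio at $(1/2,1/2)$ equals exactly $\frac{\log 2}{\lambda}$, which matches the bound but does not beat it strictly.

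Since the symmetric point only gives equality, the key idea is to perturb off $(1/2,1/2)$ so as to \emph{decrease} the denominator while losing only a second-order amount of numerator. Concretely, I would set $p = 1/2 + t$ and $q = 1/2 - t$ for small $t > 0$: this makes $q - p = -2t < 0$, so the denominator becomes $\lambda - 2\beta(1-\beta)t\log 2 < \lambda$, a first-order decrease. Because $H$ attains its maximum at $1/2$ with $H'(1/2)=0$, the numerator $\beta H(1/2+t) + (1-\beta)H(1/2-t)$ decreases only to second order in $t$, i.e. it equals $\log 2 - O(t^2)$. I would then expand the ratio
\[
\frac{\log 2 - O(t^2)}{\lambda - 2\beta(1-\beta)t\log 2}
\]
to first order in $t$ and check that the leading correction is positive: the first-order term in $t$ comes entirely from the denominator and contributes $+\frac{2\beta(1-\beta)(\log 2)^2}{\lambda^2}\,t > 0$, which dominates the second-order numerator loss for small $t$. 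This shows the ratio strictly exceeds $\frac{\log 2}{\lambda}$ for all sufficiently small $t > 0$, and since $\beta \in (0,1)$ guarantees $\beta(1-\beta) > 0$, the perturbation is genuinely effective.

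The main (though still modest) obstacle is purely bookkeeping: one must verify that the first-order gain from the denominator really dominates, and confirm that the chosen $(p,q)$ stays inside $[0,1]^2$, which holds automatically for small $t$. An even cleaner route, which I would prefer to write up, avoids calculus entirely: take $q = 1/2$ fixed and choose $p < 1/2$ with $p$ close to $1/2$. Then $q - p > 0$ would \emph{increase} the denominator, so instead I would take $p > 1/2$ close to $1/2$ so that $q-p<0$; the numerator is $\beta H(p) + (1-\beta)\log 2$, which stays close to $\log 2$, while the denominator drops below $\lambda$. Alternatively, the slickest argument is to observe directly that at $(1/2,1/2)$ the ratio equals $\frac{\log 2}{\lambda}$ and that the directional derivative of the ratio in the direction $(p,q) = (1/2,1/2) + s(1,-1)$ is strictly positive at $s=0$ (the numerator has zero gradient there by $H'(1/2)=0$, while the denominator is strictly decreasing), so moving infinitesimally in that direction strictly increases the ratio, contradicting the possibility that the maximum value $D(\lambda)$ equals $\frac{\log 2}{\lambda}$. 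I expect the write-up to be only a few lines once the perturbation direction is fixed.
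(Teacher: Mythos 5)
Your proposal is correct and is essentially the paper's own argument: the paper also notes the ratio equals $\frac{\log 2}{\lambda}$ at the symmetric point and then perturbs (taking $p=\tfrac12$, $q=\tfrac12-\varepsilon$) so that the denominator decreases at first order while, since $H'(\tfrac12)=0$, the numerator loses only $O(\varepsilon^2)$. Your symmetric perturbation $(\tfrac12+t,\tfrac12-t)$ is an immaterial variant of the same idea.
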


\begin{proof}
 On the right-hand side of \eqref{eq:DF}, by taking $p =\frac12$ and  $q = \frac12-\varepsilon$ for some small $\varepsilon>0$,  we have
$$D(\lambda) \ge \frac{\beta H(\frac12)+(1-\beta)H(\frac12-\epsilon)}{\lambda - \beta(1-\beta)\varepsilon\log 2}.
$$
Note that
\[
H\left(\frac12-\varepsilon\right) = -\left(\frac12+\varepsilon\right)\log \left(\frac12+\varepsilon\right) -\left(\frac12-\varepsilon\right)\log \left(\frac12-\varepsilon\right).
\]
Expanding $\log \left(\frac12+\varepsilon\right)  = -\log2+ 2\varepsilon -2\varepsilon^2+O(\varepsilon^3)$ as a Taylor series, we see that
$H(\frac12-\varepsilon) = \log 2- 2\varepsilon^2+ O(\varepsilon^3)$. Hence,
\begin{equation*}
D(\lambda) \geq  \frac{\log 2-2(1-\beta)\varepsilon^2+O(\varepsilon^3)}{\lambda - \beta(1-\beta)\varepsilon\log 2}.
\end{equation*}
Since the right-hand side of the above inequality exceeds $\frac{\log 2}{\lambda}$ when $\varepsilon$ is sufficiently small,  we have $D(\lambda)>\frac{\log 2}{\lambda}$.
\end{proof}

\begin{proof}[Proof of Theorem \ref{t1:dimE}(iii)]
It is a straightforward application of Lemma~\ref{l:DF1}, where we take $\lambda=\log M$.
\end{proof}

\section{The proof of Theorem \ref{t:!bdim}}
\label{S-5}
In this section, we reformulate Theorem~\ref{t:!bdim} as Theorem~\ref{thm-5.1}, stating that  a concretely constructed Cantor set ${\bf E}^*$  is the attractor of a bi-Lipschitz IFS, and has all distinct dimensions. Moreover,  it provides  the precise values of these dimensions of ${\bf E}^*$.

 We begin with the construction of ${\bf E}^*$.  Fix $\beta\in (0,1)$ and $M\geq 100$. Let $(N_k)_{k=1}^\infty$ be a strictly increasing sequence of natural numbers such that $N_1=1$ and
\begin{equation}
\label{e:Nk}
\lim_{k\to \infty}\frac{N_{k+1}}{N_k}=\infty.
\end{equation}
Define a sequence $(M_i)_{i=1}^\infty$ by
\begin{equation}
\label{e-Nk}
M_i=\left\{
\begin{array}{ll}
M,&\mbox{ if }i\in [N_{2k-1}, N_{2k}) \mbox{ for some } k,\\
M+1, &\mbox{ if }i\in [N_{2k}, N_{2k+1}) \mbox{ for some } k.
\end{array}
\right.
\end{equation}
Let ${\bf E}^{\ast}$ be the Cantor set corresponding to the length function $\ell^{\ast}\colon \Omega^*\to (0,1]$ defined by
\begin{equation}
\label{e-ell*}
\ell^{\ast}(\omega_1\ldots\omega_n) = \frac{a_{\omega_1}\cdots a_{\omega_{\lfloor \beta n\rfloor}}}{(a_{\omega_1}\cdots a_{\omega_n})^{\beta}} \cdot \left(\prod_{i=1}^n M_i\right)^{-1},
\end{equation}
where $a_0=1$ and $a_1=2$. That is, ${\bf E}^{\ast}$ is the unique Cantor set whose generating intervals $\{I_\omega\}_{\omega\in\Omega^*}$  satisfy $|I_\omega|=\ell^*(\omega)$. Clearly, ${\bf E}^{\ast}$ depends on the choices of $\beta$, $M$, and $(N_k)_{k=1}^\infty$.

Let ${\bf E}_{\beta,M}$ and ${\bf E}_{\beta,M+1}$ be the Cantor sets corresponding to the length functions $\ell_M$ and $\ell_{M+1}$, respectively, where $\ell_{M}$ and $\ell_{M+1}$ are defined by
\begin{equation}
\label{e-ell12}
\begin{split}
&\ell_{M}(\omega_1\ldots\omega_n) = \frac{a_{\omega_1}\cdots a_{\omega_{\lfloor\beta n\rfloor}}}{(a_{\omega_1}\cdots a_{\omega_n})^{\beta}} \cdot M^{-n}, \\
&\ell_{M+1}(\omega_1\ldots\omega_n) = \frac{a_{\omega_1}\cdots a_{\omega_{\lfloor\beta n\rfloor}}}{(a_{\omega_1}\cdots a_{\omega_n})^{\beta}} \cdot (M+1)^{-n}.
\end{split}
\end{equation}

Recall that $H(\cdot)$ and $D(\cdot)$ are defined respectively as in \eqref{e-entropy} and \eqref{eq:DF}.   The main result of this section is the following.

\begin{thm}
\label{thm-5.1}
 The Cantor set ${\bf E}^*$ is the attractor of a bi-Lipschitz IFS on $\R$ and has distinct lower, Hausdorff, lower box, upper box and Assouad dimensions, which are given precisely by
$$
\hdim {\bf E}^*=\frac{\log 2}{\log (M+1)},\quad \lbdim {\bf E}^*=D(M+1),\quad \ubdim {\bf E}^*=D(M),
$$
and
$$
\adim {\bf E}^* =\sup_{p\in [0,1]}\frac{H(p)}{\log M-\beta(1-p)\log 2},\quad \ldim {\bf E}^*=\sup_{p\in [0,1]}\frac{H(p)}{\log (M+1)+\beta p\log 2}.
$$
\end{thm}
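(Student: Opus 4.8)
The plan is to establish Theorem~\ref{thm-5.1} in three stages: first identify $\mathbf{E}^*$ as the attractor of a bi-Lipschitz IFS, then compute the five dimensions. For the IFS claim, I would verify the hypotheses of Theorem~\ref{thm:bilip} exactly as in the proof of Proposition~\ref{p:BLIFS}. The key point is that the ratios $|I_{i\omega}|/|I_\omega|$ and $|I_{\omega i}|/|I_\omega|$ for $\ell^*$ differ from those for $\ell$ only in that the factor $M^{-1}$ is replaced by $M_{n+1}^{-1}$ or a shift thereof. Since $M_i\in\{M,M+1\}$ and $M\ge 100$, the uniform bounds $\frac{1}{2(M+1)}\le |I_{i\omega}|/|I_\omega|\le \frac{2}{M}$ hold for all $\omega$, and the gap estimates transfer verbatim from the proof of Proposition~\ref{p:BLIFS}. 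Thus $0<\theta_*\le\theta^*<1$ and Theorem~\ref{thm:bilip} applies.

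For the Hausdorff and box dimensions, the strategy is to exploit the fact that the sequence $(M_i)$ is eventually constant at scale: on the blocks $[N_{2k-1},N_{2k})$ it equals $M$ and on $[N_{2k},N_{2k+1})$ it equals $M+1$, with the ratio condition \eqref{e:Nk} ensuring that each block dominates the cumulative product at its endpoints. For the \textbf{Hausdorff dimension}, I would run the Billingsley argument of Section~\ref{S-4.2} with the same Bernoulli measure $\mu=\pi_*\nu$ assigning $2^{-n}$ to order-$n$ cylinders. The local dimension involves $\liminf_n \frac{-n\log 2}{\log\ell^*(\omega_1\ldots\omega_n)}$, and since $\log\prod_{i=1}^n M_i$ is dominated along a subsequence by stretches where $M_i=M+1$, the $\liminf$ picks out the larger base $M+1$, giving $\hdim\mathbf{E}^*=\frac{\log 2}{\log(M+1)}$. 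For the \textbf{box dimensions}, I would adapt Lemma~\ref{lem-4.7}: the upper box dimension is a $\limsup$ over scales and is governed by the blocks where $M_i=M$ (the smaller base, producing larger $\Lambda(\rho)$), yielding $\ubdim\mathbf{E}^*=D(\log M)$; dually the $\liminf$ is governed by the $M+1$ blocks, giving $\lbdim\mathbf{E}^*=D(\log(M+1))$. Condition \eqref{e:Nk} is exactly what lets one choose scales $\rho$ concentrated entirely inside a single long block, so the counting estimate from the proof of Theorem~\ref{t1:dimE}(ii) transfers with $\log M$ replaced by $\log M$ or $\log(M+1)$ respectively.

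The genuinely new work is the \textbf{Assouad and lower dimensions}, which are local, scale-pair quantities and so see the transitions between blocks. Here the variational formulas
\[
\adim\mathbf{E}^* =\sup_{p\in[0,1]}\frac{H(p)}{\log M-\beta(1-p)\log 2},\qquad \ldim\mathbf{E}^*=\sup_{p\in[0,1]}\frac{H(p)}{\log(M+1)+\beta p\log 2}
\]
suggest that the extremal local behaviour arises from looking at a generating interval $I_\omega$ of order $N_{2k}$ (just as the product crosses from base $M$ to base $M+1$) and counting offspring down to scale $Rr$. The idea is to take $R\sim|I_\omega|$ and let $r\to 0$ so that the subinterval $E\cap[x-R,x+R]$ is essentially $I_\omega\cap\mathbf{E}^*$ rescaled; the relevant covering number is then controlled by words extending $\omega$ whose statistics maximize the entropy-to-length ratio. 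Crucially, because the $a_{\omega_j}$ weighting in $\ell^*$ contributes the first $\lfloor\beta n\rfloor$ symbols, choosing the prefix $\omega$ to consist of a controlled proportion $p$ of $1$'s lets one tune the exponent: for the Assouad dimension one maximizes the local growth using base $M$ and the favorable sign of the $\beta(1-p)\log 2$ correction, while for the lower dimension one minimizes using base $M+1$.

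The main obstacle will be making the Assouad/lower dimension computation rigorous, since one must produce \emph{matching} upper and lower bounds for a supremum over all choices of centre $x$, outer scale $R$, and inner scale $r$, and verify that the worst (resp.\ best) case is indeed realized at a block boundary $N_{2k}$ and by a prefix with the optimal digit frequency $p$. Concretely, the lower bound requires exhibiting, for each $p$ achieving the supremum, an explicit sequence of triples $(x,R,r)$ whose counting ratio approaches the claimed value; I would build these by fixing a prefix of length $\approx N_{2k}$ with $\lfloor\beta\cdot\rfloor$-proportion of $1$'s equal to $p$ and extending by words of length $\to\infty$ lying entirely in an $M$-block (for $\adim$) or $M+1$-block (for $\ldim$), using Lemma~\ref{lem-4.3} to count. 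The upper bound is harder: one must show no other configuration of scales does better, which requires a uniform estimate over \emph{all} admissible $(x,R,r)$, splitting the word between $x$ and the relevant order according to the block structure and bounding each contribution by the relevant variational expression. The condition $\lim_k N_{k+1}/N_k=\infty$ is again essential, since it guarantees that the contribution from any fixed number of block transitions inside a window $[x-R,x+R]$ is asymptotically negligible compared to the dominant homogeneous stretch, so the extremal ratio is attained in the single-base regime captured by the stated suprema.
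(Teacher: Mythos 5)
Your overall architecture matches the paper's: Theorem~\ref{thm:bilip} for the IFS claim, Billingsley plus a McMullen-type lemma for the Hausdorff dimension, comparison of the counting sets for $\ell^*$ with those for $\ell_M$ and $\ell_{M+1}$ (together with scales placed deep inside a single block) for the box dimensions, and localized prefix-counting for the Assouad and lower dimensions. However, there are concrete gaps in the two hardest computations. First, the extremal configuration for $\adim$ is misidentified. In the formula $\sup_p H(p)/(\log M-\beta(1-p)\log 2)$ the numerator $H(p)$ comes from counting \emph{extension} words $\omega\in\Omega^n$ with ${\mathsf L}_1(\omega)=\lfloor pn\rfloor$, so $p$ must be the digit frequency of the extension, not of the prefix. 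Using \eqref{e-lengthuw}, if the prefix has $1$-frequency $q$ in the relevant tail segment and the extension has frequency $p$, then $\log(1/\rho)\approx n\bigl(\log M-\beta(q-p)\log 2\bigr)$, and the claimed value is attained only with $q=1$, i.e.\ prefix $u=1^{N_{2j-1}}$ (all ones, ending at the start of an $M$-block; note also that a prefix of length $N_{2k}$ is followed by an $(M+1)$-block, so your indexing for the $\adim$ witness is off). Your ``prefix with a controlled proportion $p$ of ones'' does not reproduce the denominator $\log M-\beta(1-p)\log 2$; dually, the lower-dimension witness must be $u=0^{N_{2j}}$.

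Second, your account of $\ldim$ inverts the logical roles of the two bounds. Since $\ldim$ is an infimum over centres and outer scales, exhibiting a configuration gives the \emph{upper} bound, while the lower bound $\ldim{\bf E}^*\ge D_l-\epsilon$ is the uniform statement that $\#\Lambda^{*,u}(\rho)\ge\rho^{-(D_l-\epsilon)}$ for \emph{every} prefix $u$ and every small $\rho$. This cannot be obtained by the binomial upper-counting that handles the Assouad upper bound: one must produce many words in $\Lambda^{*,u}(\rho)$ concentrated at a single level $n$, and the paper does this by equipping $\Omega^\N$ with the Bernoulli measure of weights $(1-p_1,p_1)$, invoking the Shannon--McMillan--Breiman theorem and Egorov to control cylinder measures and digit frequencies on a set of measure $>1/2$, and pigeonholing over the $O(\log(1/\rho))$ admissible levels. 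Your proposal contains no mechanism for this direction. A smaller but genuine gap of the same flavour occurs for the Hausdorff upper bound: one needs $\limsup_n\bigl(\tfrac{(a_{\omega_1}\cdots a_{\omega_n})^\beta}{a_{\omega_1}\cdots a_{\omega_{\lfloor\beta n\rfloor}}}\prod_{i=1}^nM_i\bigr)^{1/n}\ge M+1$ for \emph{every} $\omega$, and treating the two oscillatory factors separately does not give this, since the $a$-factor could a priori be small precisely along the subsequence where $\prod_{i\le n}M_i\approx(M+1)^n$; the paper reruns McMullen's telescoping argument along iterates of $n\mapsto\lfloor\beta n\rfloor$ for the combined quantity.
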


The proof of the above result will be presented in the following subsections by verifying ${\bf E}^*$ as the attractor of a bi-Lipschitz IFS, and determining the various notions of dimension of ${\bf E}^*$ separately.

\subsection{${\bf E}^*$ as the attractor of a bi-Lipschitz IFS} The main result of this subsection is the following.
\begin{prop}
\label{prop-Attractor}
The collection $\Phi=\{\phi_0,\phi_1\}$, where each map $\phi_i$ is defined via the Cantor construction of ${\bf E}^*$ as in \eqref{eq:IFSmap}, forms a bi-Lipschitz IFS whose attractor is ${\bf E}^*$.
\label{prop-attractor}
\end{prop}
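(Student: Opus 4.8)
The plan is to verify that the two maps $\phi_0,\phi_1$ associated with the Cantor set ${\bf E}^*$ satisfy the hypotheses of Theorem~\ref{thm:bilip}. That is, I must exhibit uniform bounds $0<\theta_*\le \theta^*<1$ on the ratios $|I_{i\omega}|/|I_\omega|$ and $|G_{i\omega}|/|G_\omega|$ over all $\omega\in\Omega^*$ and $i\in\{0,1\}$. This is precisely the structure of the proof of Proposition~\ref{p:BLIFS}, and the key point is that ${\bf E}^*$ differs from ${\bf E}={\bf E}_{\beta,M}$ only in that the uniform contraction factor $M^{-n}$ is replaced by $(\prod_{i=1}^n M_i)^{-1}$, where each $M_i\in\{M,M+1\}$. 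Since $M\ge 100$, both $M$ and $M+1$ are comfortably large, so the same computations should go through with only minor changes to the numerical constants.

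Concretely, I would first compute $|I_{i\omega}|/|I_\omega|$ directly from the length function~\eqref{e-ell*}. For $\omega\in\Omega^n$, prepending the symbol $i$ shifts every index up by one and changes the denominator product $\prod_{j=1}^{n+1}M_j$ only by introducing the factor $M_{n+1}^{-1}\in\{M^{-1},(M+1)^{-1}\}$. The combinatorial prefactor $a_{\omega_1}\cdots a_{\omega_{\lfloor\beta n\rfloor}}/(a_{\omega_1}\cdots a_{\omega_n})^\beta$ transforms exactly as in Proposition~\ref{p:BLIFS}, contributing a bounded factor in $\{1,2^{-\beta},2^{1-\beta}\}$ depending on the three cases for how $\lfloor\beta(n+1)\rfloor$ compares to $\lfloor\beta n\rfloor$. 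Combining the bounded combinatorial factor with $M_{n+1}^{-1}$, I obtain analogues of~\eqref{eq-ratios-3}, namely bounds of the form
\[
\frac{1}{2(M+1)}\le \frac{|I_{i\omega}|}{|I_\omega|}\le \frac{2}{M}\qquad\text{and}\qquad \frac{1}{2(M+1)}\le \frac{|I_{\omega i}|}{|I_\omega|}\le \frac{2}{M},
\]
valid for all $\omega\in\Omega^*$ and $i\in\{0,1\}$. Both the lower bound and upper bound are uniform because the number of distinct values of $M_i$ is finite (just $M$ and $M+1$).

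Having these interval ratio bounds, the gap estimate follows exactly as in Proposition~\ref{p:BLIFS}: I use the identity $|G_\omega|=|I_\omega|-|I_{\omega0}|-|I_{\omega1}|$ to write
\[
\frac{|G_{i\omega}|}{|G_\omega|}=\frac{|I_{i\omega}|}{|I_\omega|}\cdot \frac{1-|I_{i\omega0}|/|I_{i\omega}|-|I_{i\omega1}|/|I_{i\omega}|}{1-|I_{\omega0}|/|I_\omega|-|I_{\omega1}|/|I_\omega|},
\]
and then bound the second factor above and below using the two-sided interval bounds just established. Since each child ratio lies in $[1/(2(M+1)),2/M]$, the quantities $1-|I_{\omega 0}|/|I_\omega|-|I_{\omega 1}|/|I_\omega|$ lie in a compact subinterval of $(0,1)$ bounded away from $0$ (using $M\ge 100$, so that $4/M<1$), giving uniform positive lower and strictly-less-than-one upper bounds for $|G_{i\omega}|/|G_\omega|$. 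This verifies condition~\eqref{e-thm2.1}, and Theorem~\ref{thm:bilip} then guarantees that $\phi_0,\phi_1$ are bi-Lipschitz contractions with attractor ${\bf E}^*$ by~\eqref{e-Cantor}.

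I do not expect any genuine obstacle here; the proof is essentially a transcription of Proposition~\ref{p:BLIFS} with $M$ replaced in the lower bounds by $M+1$ wherever the denominator factor could be the larger value. The one point requiring a little care is ensuring that the bounds are genuinely \emph{uniform} in $n$ despite the $n$-dependence of the sequence $(M_i)$: this is immediate because $M_i$ takes only the two values $M$ and $M+1$, so every per-step ratio $M_{n+1}^{-1}$ lies in the fixed finite set $\{M^{-1},(M+1)^{-1}\}$ regardless of $n$. Thus the suprema and infima in~\eqref{e-thm2.1} are controlled by constants depending only on $M$ and $\beta$, and the conclusion follows.
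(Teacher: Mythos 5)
Your proposal is correct and follows essentially the same route as the paper: the paper's proof of Proposition~\ref{prop-Attractor} likewise reduces to Theorem~\ref{thm:bilip} by replacing \eqref{eq-ratios-3} with the bounds $\frac{1}{2M+2}\le |I_{i\omega}|/|I_\omega|\le \frac{2}{M}$ (and similarly for children), and then deriving the gap-ratio bounds exactly as in Proposition~\ref{p:BLIFS}. Your observation that uniformity in $n$ is automatic because $M_{n+1}\in\{M,M+1\}$ is precisely the point the paper relies on.
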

\begin{proof}
This is achieved  by  a minor modification of the proof of Proposition~\ref{p:BLIFS}. To avoid repetition,  we only highlight the main differences below.

Instead of \eqref{eq-ratios-3}, we have \begin{equation} \label{eq-ratios-3*}
 \frac{1}{2 M+2} \le  \frac{|I_{i\omega}|}{|I_w|} \le \frac{2}{M} \quad \mbox{ and }\quad \frac{1}{2 M+2} \le  \frac{|I_{\omega i}|}{|I_w|} \le \frac{2}{M}\quad \end{equation}
 for all $\omega\in \Omega^{\ast}$ and $i\in \{0,1\}$, from which we can derive directly
  \begin{equation*}
			\inf_{\omega\in\Omega^*}\min\biggl\{ \frac{|I_{0\omega}|}{|I_\omega|}, \frac{|I_{1\omega}|}{|I_\omega|} \biggr\} \ge \frac{1}{2M+2},\quad
\sup_{\omega\in\Omega^*}\max\biggl\{ \frac{|I_{0\omega}|}{|I_\omega|}, \frac{|I_{1\omega}|}{|I_\omega|} \biggr\} \le \frac{2}{M},
		\end{equation*}
 and
 	\begin{equation*}
				\inf_{\omega\in\Omega^*} \min \biggl\{ \frac{|G_{0\omega}|}{|G_\omega|}, \frac{|G_{1\omega}|}{|G_\omega|} \biggr\} \ge \frac{1}{2M+2} \cdot \frac{M-4}{M}, \quad
			\sup_{\omega\in\Omega^*} \max \biggl\{ \frac{|G_{0\omega}|}{|G_\omega|}, \frac{|G_{1\omega}|}{|G_\omega|} \biggr\} \le\frac{2}{M-4}.
	\end{equation*}
Then by Theorem~\ref{thm:bilip}, $\Phi$ is a bi-Lipschitz IFS.
\end{proof}
\subsection{Hausdorff dimension of ${\bf E}^*$.}
Here, we prove the following proposition.
\begin{prop}
\label{prop-Hausdorff}
$\displaystyle \hdim {\bf E}^*=\frac{\log 2}{\log (M+1)}$.
\end{prop}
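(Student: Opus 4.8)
The plan is to apply Billingsley's Lemma (Lemma~\ref{Billingsley-lemma}) with the uniform Bernoulli measure $\mu = \pi_*\nu$, where $\nu$ assigns mass $2^{-n}$ to every cylinder of order $n$, so that $\mu(I_\omega) = 2^{-n}$ for $\omega\in\Omega^n$; the hypothesis of Lemma~\ref{Billingsley-lemma} is satisfied thanks to the ratio bounds \eqref{eq-ratios-3*} established in Proposition~\ref{prop-attractor}. For $x\in{\bf E}^*$ with coding $\omega=\pi^{-1}(x)$, writing $P_n=\prod_{i=1}^n M_i$ and using \eqref{e-ell*}, I would decompose
\[
\frac{-\log|I_n(x)|}{n} = g(n)+f(n),\qquad g(n)=\frac1n\log P_n,\quad f(n)=\frac1n\log\frac{(a_{\omega_1}\cdots a_{\omega_n})^\beta}{a_{\omega_1}\cdots a_{\omega_{\lfloor\beta n\rfloor}}},
\]
so that $\dfrac{\log\mu(I_n(x))}{\log|I_n(x)|}=\dfrac{\log 2}{g(n)+f(n)}$ and hence $\displaystyle\liminf_n\frac{\log\mu(I_n(x))}{\log|I_n(x)|}=\frac{\log 2}{\limsup_n(g(n)+f(n))}$. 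For the lower bound, note that $g(n)\le\log(M+1)$ for all $n$ since each $M_i\le M+1$, while the second part of Lemma~\ref{mcmullen-lemma} gives $f(n)\to 0$ for $\mu$-a.e.\ $x$. Thus $\limsup_n(g(n)+f(n))\le\log(M+1)$ a.e., so $\liminf_n\frac{\log\mu(I_n(x))}{\log|I_n(x)|}\ge\frac{\log 2}{\log(M+1)}$ on a set of full $\mu$-measure, and Lemma~\ref{Billingsley-lemma} yields $\hdim{\bf E}^*\ge\frac{\log 2}{\log(M+1)}$.

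For the upper bound, by Lemma~\ref{Billingsley-lemma} applied with $A={\bf E}^*$ (which has $\mu({\bf E}^*)=1$) it suffices to prove that $\limsup_n(g(n)+f(n))\ge\log(M+1)$ for \emph{every} $\omega\in\Omega^{\N}$. Fix $\epsilon>0$. Since $M_i=M+1$ on each block $[N_{2k},N_{2k+1})$ while $\log(M+1)-\log M_i\le\log\frac{M+1}{M}$ otherwise, the deficit obeys $\log(M+1)-g(n)\le\frac{N_{2k}}{n}\log\frac{M+1}{M}$, and hence $g(n)\ge\log(M+1)-\epsilon$ for all $n$ in the range $[C_\epsilon N_{2k},N_{2k+1})$ with $C_\epsilon=\epsilon^{-1}\log\frac{M+1}{M}$. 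By \eqref{e:Nk} this range has length ratio $N_{2k+1}/(C_\epsilon N_{2k})\to\infty$ as $k\to\infty$. It therefore remains to locate, inside such a long range, an index $n$ at which $f(n)$ is not too negative.

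The main obstacle is exactly this last point: the global bound \eqref{e-Mcmullen} controls $\limsup_n f(n)$ only along an uncontrolled subsequence, which need not meet the good ranges for $g$. I would resolve it with a localized, quantitative version of Lemma~\ref{mcmullen-lemma}: for every $\epsilon>0$ there is $T=T(\epsilon,\beta)$ such that whenever $b/a\ge T$ (and $a$ is large), some $n\in[a,b]$ satisfies $f(n)\ge-\epsilon$. To prove this, suppose for contradiction that $f(n)<-\epsilon$ throughout $[a,b]$; writing ${\mathsf L}_1(\omega|n)$ for the number of $1$'s among $\omega_1\ldots\omega_n$ and $c=\epsilon/\log 2$, this reads ${\mathsf L}_1(\omega|\lfloor\beta n\rfloor)>\beta\,{\mathsf L}_1(\omega|n)+cn$. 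Setting $m_0=b$, $m_{j+1}=\lfloor\beta m_j\rfloor$, and $s_j={\mathsf L}_1(\omega|m_j)/m_j\in[0,1]$, the inequality together with $m_{j+1}\le\beta m_j$ telescopes to $s_{j+1}\ge s_j+c/\beta$, so $s_J\ge Jc/\beta$; choosing $J=\lceil\beta/c\rceil+1$ forces $s_J>1$, a contradiction, provided all of $m_0,\dots,m_{J-1}$ lie in $[a,b]$, which holds once $b/a\ge T:=2\beta^{-J}$ and $a$ is large (so the $O(1)$ floor errors in $m_j\approx\beta^j b$ are negligible). Applying this with $[a,b]=[C_\epsilon N_{2k},N_{2k+1})$ for $k$ large produces $n_k$ in the good range with $f(n_k)\ge-\epsilon$ and $g(n_k)\ge\log(M+1)-\epsilon$, whence $\limsup_n(g(n)+f(n))\ge\log(M+1)-2\epsilon$. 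Letting $\epsilon\to 0$ gives the upper bound, and combining it with the lower bound yields $\hdim{\bf E}^*=\frac{\log 2}{\log(M+1)}$.
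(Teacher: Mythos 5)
Your proof is correct and follows essentially the same route as the paper: Billingsley's lemma applied to the uniform Bernoulli measure, a lower bound from the a.e.\ statement $f(n)\to 0$ together with $g(n)\le\log(M+1)$, and an upper bound from a deterministic $\limsup$ estimate proved by iterating $n\mapsto\lfloor\beta n\rfloor$ finitely many times inside a long block where $M_i=M+1$ and telescoping to contradict a trivial bound. Your ``localized'' quantitative lemma is a clean repackaging of the paper's global contradiction argument (its function $f(n)=\frac1n\log(a_{\omega_1}\dotsm a_{\omega_n})\in[0,\log 2]$ is exactly $\log 2$ times your digit frequency $s_j\in[0,1]$), so the two proofs are the same computation in different coordinates.
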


We start from the following lemma, which is analogous to Lemma~\ref{e-Mcmullen}.

\begin{lem}For all $\omega= (\omega_n)_{n=1}^{\infty}\in \Omega^{\N}$,
\begin{equation}
\label{e-Mcmullen'}
	\limsup_{n\to\infty}\biggl( \frac{(a_{\omega_1}\dotsm a_{\omega_n})^\beta}{a_{\omega_1}\dotsm a_{\omega_{\lfloor\beta n\rfloor}}}\prod_{i=1}^n M_i \biggr)^{1/n} \ge M+1.
	\end{equation}
Moreover, for every Bernoulli product measure $\nu$ on $\Omega^{\N}$,
\[
\limsup_{n\to\infty}\biggl( \frac{(a_{\omega_1}\dotsm a_{\omega_n})^\beta}{a_{\omega_1}\dotsm a_{\omega_{\lfloor\beta n\rfloor}}}\prod_{i=1}^n M_i \biggr)^{1/n} = M+1
\]
for $\nu$-a.e.~$\omega\in \Omega^{\N}$.
\end{lem}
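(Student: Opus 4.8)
The plan is to treat the two factors separately. Write $P_n(\omega) = (a_{\omega_1}\cdots a_{\omega_n})^{\beta}/(a_{\omega_1}\cdots a_{\omega_{\lfloor\beta n\rfloor}})$ and $Q_n = \prod_{i=1}^n M_i$, so that the quantity under study is $(P_n(\omega)Q_n)^{1/n}$. Since $M_i \in \{M, M+1\}$ we have $M \le Q_n^{1/n}\le M+1$ for all $n$, and because the number of indices $i \le N_{2k+1}$ with $M_i = M$ is at most $N_{2k} = o(N_{2k+1})$ by \eqref{e:Nk}, we get $Q_n^{1/n}\to M+1$ along $n = N_{2k+1}$ and hence $\limsup_n Q_n^{1/n} = M+1$.

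The a.e.\ equality is the easy part and is independent of \eqref{e-Mcmullen'}: for $\nu$-a.e.\ $\omega$ the second part of Lemma \ref{mcmullen-lemma} gives $P_n(\omega)^{1/n}\to 1$, and multiplying a sequence by one tending to $1$ leaves the $\limsup$ unchanged, so $\limsup_n (P_n(\omega)Q_n)^{1/n} = \limsup_n Q_n^{1/n} = M+1$. The substance of the lemma is therefore the universal lower bound \eqref{e-Mcmullen'}, which I address next. Here one cannot simply combine $\limsup_n P_n^{1/n}\ge 1$ with $\limsup_n Q_n^{1/n} = M+1$, since the times at which $P_n^{1/n}$ is near $1$ need not meet the block-tops $n \approx N_{2k+1}$ where $Q_n^{1/n}$ is near $M+1$.

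The key step is a \emph{localized} form of McMullen's estimate: for fixed $\omega$, $\beta$, and $\epsilon > 0$ there are constants $\kappa, n_0$ so that any window $[L,N]$ with $n_0 \le L$ and $N/L \ge \kappa$ contains some $n$ with $P_n(\omega)^{1/n}\ge e^{-\epsilon}$. I would prove this by running McMullen's telescoping argument inside the window. Setting $\phi(m) = {\mathsf L}_1(\omega|m)/m$ and using $\tfrac1n\log P_n(\omega) = \beta(\log 2)(\phi(n)-\phi(\lfloor\beta n\rfloor)) + O(1/n)$, the failure of the claim (that is, $P_n^{1/n} < e^{-\epsilon}$ throughout $[L,N]$) forces $\phi(\lfloor\beta n\rfloor) > \phi(n) + \eta$ there, with $\eta = \epsilon/(2\beta\log 2)$; iterating $n_0 = N$, $n_{j+1} = \lfloor\beta n_j\rfloor$ while $n_j \ge L$ yields $\phi(n_J) > J\eta$, contradicting $\phi \le 1$ once $J \ge 1/\eta$. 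Since one may take $J = \lceil 1/\eta\rceil$ steps while staying above $L$ as soon as $\log(N/L)/\log(1/\beta) \ge \lceil 1/\eta\rceil$, the claim holds with $\kappa = (1/\beta)^{\lceil 1/\eta\rceil}$, the floors and the $O(1/n)$ term being absorbed by enlarging $n_0$.

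Finally I would combine the two ingredients. Fix $\epsilon > 0$ and put $L_k = \sqrt{N_{2k}N_{2k+1}}$, which lies in $[N_{2k}, N_{2k+1})$ for large $k$. On $[L_k, N_{2k+1})$ every index carries $M_i = M+1$, so for any $n_k$ in this window $\tfrac1{n_k}\log Q_{n_k}\ge \log(M+1) - \tfrac{N_{2k}}{L_k}\log\tfrac{M+1}{M}\to \log(M+1)$ by \eqref{e:Nk}. Moreover $N_{2k+1}/L_k = \sqrt{N_{2k+1}/N_{2k}}\to\infty$, so for large $k$ the window is long enough for the localized estimate to provide $n_k \in [L_k, N_{2k+1})$ with $P_{n_k}(\omega)^{1/n_k}\ge e^{-\epsilon}$. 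Then $(P_{n_k}(\omega)Q_{n_k})^{1/n_k}\ge e^{-\epsilon}(M+1)(1-o(1))$ with $n_k \to \infty$, and letting $\epsilon \downarrow 0$ gives \eqref{e-Mcmullen'}. The main obstacle is precisely this localized estimate: the bare Lemma \ref{mcmullen-lemma} gives no information about \emph{where} its good times occur, and the telescoping must be redone inside windows whose relative length $N/L$ blows up thanks to \eqref{e:Nk}.
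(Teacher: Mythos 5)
Your proof is correct and is essentially the paper's argument: the paper also runs McMullen's telescoping of $n\mapsto\lfloor\beta n\rfloor$ (via $f(n)=\tfrac1n\log(a_{\omega_1}\cdots a_{\omega_n})$, which is your $\phi(n)\log 2$) inside a window at the top of an $(M+1)$-block, with \eqref{e:Nk} guaranteeing the window is long enough relative to the bounded number of iterates, and obtains a contradiction with $f\in[0,\log 2]$. The only difference is presentational: you decouple the two factors and extract a standalone ``localized McMullen'' lemma, whereas the paper folds the $\prod M_i$ estimate into a single contradiction argument; the handling of the a.e.\ statement is identical.
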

\begin{proof}
We first prove \eqref{e-Mcmullen'} by adopting the idea used in the proof of  \cite[Lemma 4]{McMul84}.  Suppose on the contrary that \eqref{e-Mcmullen'} does not hold for some $\omega= (\omega_n)_{n=1}^{\infty}\in \Omega^{\N}$. Then there exist $\epsilon\in (0,1)$ and $n_0\in \N$  such that
\begin{equation*}
\biggl( \frac{(a_{\omega_1}\dotsm a_{\omega_n})^\beta}{a_{\omega_1}\dotsm a_{\omega_{\lfloor\beta n\rfloor}}}\prod_{i=1}^n M_i \biggr)^{1/n} \leq (M+1)^{1-\epsilon}\qquad \mbox{ for all }n\geq n_0.
	\end{equation*}
Writing $f(n) \coloneqq (1/n)\log (a_{\omega_1}\dotsm a_{\omega_n})$ and taking the logarithm, we obtain
$$
\beta f(n)-\frac{\lfloor \beta n\rfloor}{n} f(\lfloor\beta n\rfloor)\leq (1-\epsilon)\log (M+1)-\frac{1}{n}\sum_{i=1}^n \log M_i \qquad \mbox{ for all }n\geq n_0.
$$
Since $f(n)\in [0, \log 2]$ is uniformly bounded and $\lfloor\beta n\rfloor/n\to \beta$ as $n\to \infty$, there exists $n_1>n_0$ such that
\begin{equation}
\label{e-fnn}
\beta (f(n)-f(\lfloor\beta n\rfloor))\leq \left(1-\frac{\epsilon}{2}\right)\log (M+1)-\frac{1}{n}\sum_{i=1}^n \log M_i \qquad \mbox{ for all }n\geq n_1.
\end{equation}

Choose $p\in \N$ such that
\begin{equation}
\label{e-rangep}
p\epsilon \log (M+1)>4\log 2.
\end{equation}
Then choose a sufficiently large integer $k$ such that
$$
N_{2k}>n_1   \quad \mbox{ and } \quad \frac{N_{2k+1}}{N_{2k}}\geq \left(\frac{2}{\beta}\right)^p \left(\frac{4}{\epsilon}\right).
$$
By the definition of the sequence $(M_i)$, we see that for $4N_{2k}/\epsilon<n\leq N_{2k+1}$,
$$
\frac{1}{n}\sum_{i=1}^{n}\log M_i\geq \frac{1}{n}\sum_{i=N_{2k}+1}^{n}\log M_i=\left(1-\frac{N_{2k}}{n}\right)\log (M+1)\geq \left(1-\frac{\epsilon}{4}\right)\log (M+1).
$$
Plugging this into \eqref{e-fnn} yields
\begin{equation}
\label{e-fnn1}
\beta (f(n)-f(\lfloor\beta n\rfloor))\leq -\frac{\epsilon}{4}\log (M+1)\qquad \mbox{ for all }\;\frac{4N_{2k}}{\epsilon}<n\leq N_{2k+1}.
\end{equation}

Define $\tau\colon \N\to \N\cup \{0\}$ by $\tau(n)=\lfloor\beta n\rfloor$. Clearly, $\tau(n)\geq \beta n-1\geq (\beta/2)n$ for $n\geq 2/\beta$. Since
$$
\frac{N_{2k+1}}{N_{2k}}\geq \left(\frac{2}{\beta}\right)^p \left(\frac{4}{\epsilon}\right),$$
it follows that $$
\frac{4N_{2k}}{\epsilon}\leq \tau^{i}(N_{2k+1})\leq N_{2k+1},\quad i=1,2,\ldots, p,
$$
where $\tau^i=\underbrace{\tau\circ \cdots \circ \tau}_{i}$ denotes the $i$-th iterate of $\tau$. Hence
\begin{align*}
f(N_{2k+1})-f(\tau^p(N_{2k+1}))&= \sum_{i=0}^{p-1} \left(f(\tau^i(N_{2k+1}))-f(\tau^{i+1}(N_{2k+1}))\right)\\
&\leq -\frac{p\epsilon}{4\beta}\log (M+1)\qquad \qquad \mbox{(by \eqref{e-fnn1})}\\
&<-\log 2\qquad \qquad \qquad \qquad\; \mbox{(by \eqref{e-rangep})},
\end{align*}
which contradicts the fact that  $f(n)\in [0,\log 2]$ for all $n$. Hence \eqref{e-Mcmullen'} always holds.

Next, we prove the second part of the lemma. By the Birkhoff ergodic theorem (see, e.g.,~\cite{Walters}),
$$
\lim_{n\to\infty}\biggl( \frac{(a_{\omega_1}\dotsm a_{\omega_n})^\beta}{a_{\omega_1}\dotsm a_{\omega_{\lfloor\beta n\rfloor}}} \biggr)^{1/n}=1\qquad \mbox{ for $\nu$-a.e.~}(\omega_n)_{n=1}^\infty\in \Omega^\N.
$$
Meanwhile, by the construction of $(M_i)$,
$$
\limsup_{n\to\infty}\left(\prod_{i=1}^n M_i\right)^{1/n}=M+1.
$$
The second part of the lemma then follows by combining the above two limits.
\end{proof}

\begin{proof}[Proof of Proposition \ref{prop-Hausdorff}]
It is done by a routine modification of the proof of Theorem~\ref{t1:dimE}(i), in which we simply use Lemma~\ref{e-Mcmullen'} to replace Lemma~\ref{e-Mcmullen}.
\end{proof}

\subsection{Lower and upper box dimensions of ${\bf E}^*$.}
Here we prove the following.
\begin{prop}
\label{prop-box}
$\lbdim {\bf E}^*=D(M+1)$, and\ $\ubdim {\bf E}^*=D(M)$.
\end{prop}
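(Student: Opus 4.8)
The plan is to compute the upper and lower box dimensions of ${\bf E}^*$ by adapting the machinery developed in Section~\ref{S-4.3} for ${\bf E}_{\beta,M}$, tracking carefully how the oscillation of the sequence $(M_i)$ between the values $M$ and $M+1$ affects the counting. Define, analogously to \eqref{e-lengthell}, the set $\Lambda^*(\rho) = \{\omega\in\Omega^*\colon \ell^*(\omega)\le \rho<\ell^*(\omega^-)\}$. Exactly as in Lemma~\ref{lem-4.7}(i), using the uniform two-sided bounds on ratios from \eqref{eq-ratios-3*}, one shows that $\ubdim {\bf E}^* = \limsup_{\rho\to 0}\frac{\log(\#\Lambda^*(\rho))}{\log(1/\rho)}$ and $\lbdim {\bf E}^* = \liminf_{\rho\to 0}\frac{\log(\#\Lambda^*(\rho))}{\log(1/\rho)}$. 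So the entire problem reduces to estimating $\#\Lambda^*(\rho)$ along appropriate subsequences of scales.

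For the upper box dimension, the idea is that the sup is realized along scales $\rho$ for which the relevant words $\omega\in\Lambda^*(\rho)$ have length $n$ lying inside a block $[N_{2k-1},N_{2k})$ where $M_i\equiv M$. On such a block $\prod_{i=1}^n M_i$ behaves essentially like $M^{n}$ up to a factor coming from the earlier (bounded-proportion, by \eqref{e:Nk}) initial segment, so $\ell^*(\omega)\approx \ell_M(\omega)$ in exponential scale. The key point is that the condition \eqref{e:Nk} forces $N_{2k}/N_{2k-1}\to\infty$, so for $n$ near $N_{2k}$ the contribution of the prefix of length $N_{2k-1}$ to $\frac1n\log\prod M_i$ is negligible; hence the counting estimate from Step~1 of the proof of Theorem~\ref{t1:dimE}(ii) applies with $\log M$ in place of $\log M$, giving the upper bound $\ubdim {\bf E}^*\le D(M)$, while choosing $p,q$ optimally as in Step~2 along $n\to N_{2k}^-$ gives the matching lower bound $\ubdim {\bf E}^*\ge D(M)$. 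Symmetrically, the lower box dimension is realized along scales where $n$ lies deep inside an $(M+1)$-block $[N_{2k},N_{2k+1})$, where the same reasoning with $\log(M+1)$ yields $\lbdim {\bf E}^* = D(M+1)$; note $D$ is decreasing in its argument, consistent with $\lbdim\le\ubdim$.

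Concretely I would prove two inequalities in each direction. For $\ubdim {\bf E}^*\le D(M)$ and $\lbdim {\bf E}^*\ge D(M+1)$, I would rerun the Step~1 counting argument of Theorem~\ref{t1:dimE}(ii): for $\omega\in\Omega^n\cap\Lambda^*(\rho)$, write $\ell^*(\omega)=2^{(1-\beta){\mathsf N}_1(\omega)-\beta{\mathsf N}_2(\omega)}(\prod_{i=1}^n M_i)^{-1}$, and replace the bound \eqref{e-4.11} by one using $\prod_{i=1}^n M_i$. Since $M\le M_i\le M+1$, we have $M^n\le \prod_{i=1}^n M_i\le (M+1)^n$, but more precisely $\frac1n\log\prod_{i=1}^n M_i\to \log M$ along $n\to N_{2k}$ (after the long preceding $M$-block) and $\to\log(M+1)$ along $n\to N_{2k+1}$. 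Feeding these asymptotics into the binomial/Stirling estimate (Lemma~\ref{lem-4.3}) and the definition \eqref{eq:DF} of $D$ gives the universal upper bound governed by $D(M)$ and the universal lower bound governed by $D(M+1)$. For the reverse inequalities I would run the Step~2 lower-bound construction, placing the controlled words $G_n$ at lengths $n$ chosen just below $N_{2k}$ (for $\ubdim$) and just below $N_{2k+1}$ (for $\lbdim$), where the local growth rate of $\prod M_i$ is as close to $\log M$, respectively $\log(M+1)$, as desired.

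The main obstacle will be making the interchange between "word length $n$" and "scale $\rho$" rigorous in the presence of the two-block structure, i.e.\ controlling precisely the quantity $\frac1n\log\prod_{i=1}^n M_i$ uniformly as $n$ ranges over a block and verifying that the extremal scales for the $\limsup$ and $\liminf$ are indeed the ones I claim. This is exactly where \eqref{e:Nk} is essential: it guarantees that as $n$ traverses a block $[N_{2k-1},N_{2k})$ the average $\frac1n\sum_{i=1}^n\log M_i$ sweeps monotonically and spends an asymptotically full proportion of its range near the block's characteristic value, so that neither the prefix from earlier blocks nor the boundary effects contaminate the limit. I would isolate this as a short lemma stating that $\liminf_n \frac1n\log\prod_{i=1}^n M_i=\log M$ and $\limsup_n\frac1n\log\prod_{i=1}^n M_i=\log(M+1)$, with the extrema attained along $n=N_{2k}$ and $n=N_{2k+1}$ respectively; once this is in hand, the dimension formulas follow by combining it with the now-routine counting estimates and the monotonicity and continuity of $\lambda\mapsto D(\lambda)$.
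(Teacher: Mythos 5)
Your overall strategy is viable but genuinely different from the paper's. The paper does not re-run the entropy/Stirling counting for $\ell^*$ at all: after reducing both box dimensions to the asymptotics of $\#\Lambda^*(\rho)$ exactly as you propose, it compares $\#\Lambda^*(\rho)$ directly with the corresponding counting functions $\#\Lambda_M(\rho)$ and $\#\Lambda_{M+1}(\rho)$ for the pure Cantor sets ${\bf E}_{\beta,M}$ and ${\bf E}_{\beta,M+1}$ of Section~\ref{S-4}. Since $\ell_{M+1}\le\ell^*\le\ell_M$ pointwise, every word of $\Lambda^*(\rho)$ has a unique prefix in $\Lambda_{M+1}(\rho)$ and this prefix map is surjective (the cylinders of each family partition $\Omega^\N$), giving $\#\Lambda_{M+1}(\rho)\le\#\Lambda^*(\rho)\le\#\Lambda_M(\rho)$ for every $\rho$, hence $\bdim {\bf E}_{\beta,M+1}\le\lbdim{\bf E}^*$ and $\ubdim{\bf E}^*\le\bdim{\bf E}_{\beta,M}$ with no new counting; the hypothesis \eqref{e:Nk} is used only for the two reverse inequalities, again via prefix-map surjections at special scales such as $\rho_1=\exp(-N_{2k+1}/(2C_2))$, where $\prod_{i=1}^n M_i$ differs from $(M+1)^n$ only by a factor $(M+1)^{N_{2k}}\le\rho_1^{-\epsilon}$. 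Everything is then quoted from Theorem~\ref{t1:dimE}(ii) and Remark~\ref{rem-4.4} as a black box. Your plan instead redoes the Step~1/Step~2 arguments with $\prod_{i=1}^nM_i$ in place of $M^n$; this can be made to work and your key lemma on $\liminf$ and $\limsup$ of $\frac1n\log\prod_{i=1}^nM_i$ is correct, but it duplicates Section~\ref{S-4.3} where the paper's comparison avoids it.

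There is, however, one concrete misassignment that would fail as written. You attribute $\lbdim{\bf E}^*\le D(M+1)$ to ``the Step~2 lower-bound construction, placing the controlled words $G_n$ at lengths just below $N_{2k+1}$''. Step~2 is a packing argument: it exhibits disjoint intervals meeting the attractor and therefore only ever produces \emph{lower} bounds on $N_\delta({\bf E}^*)$; it cannot bound $\lbdim$ from above. The inequality $\lbdim{\bf E}^*\le D(M+1)$ requires an \emph{upper} bound on $\#\Lambda^*(\rho)$ along a sequence of scales, i.e., the Step~1 covering count run at scales $\rho$ chosen (using the length control of Lemma~\ref{lem-4.7}(ii) together with \eqref{e:Nk}) so that every $\omega\in\Omega^n\cap\Lambda^*(\rho)$ has $N_{2k}\le\epsilon n$ and $n\le N_{2k+1}$, whence $\prod_{i=1}^nM_i\ge(M+1)^{(1-\epsilon)n}$ and the count is at most a constant times $\rho^{-D'}$ with $D'\to D(M+1)$ as $\epsilon\to0$ by continuity of $D$ in \eqref{eq:DF}. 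Symmetrically, the ``universal lower bound governed by $D(M+1)$'', i.e., $\lbdim{\bf E}^*\ge D(M+1)$, comes from Step~2 with the uniform bound $\prod_{i=1}^n M_i\le(M+1)^n$, not from the Step~1 counting as your text suggests. In short: your four target inequalities and your four available arguments are all correct, but two of them are paired with an argument that bounds the covering number in the wrong direction; once covering counts are used for the two upper bounds and packing constructions for the two lower bounds, the proof goes through.
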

\begin{proof}
We first prove that
\begin{equation}
\label{e-relation}
\bdim {\bf E}_{\beta, M+1}\leq  \lbdim {\bf E}^*,\qquad \ubdim {\bf E}^*\leq \bdim {\bf E}_{\beta, M}.
\end{equation}
To show this, define for $0<\rho<1$,
\begin{equation}
\label{e-Lambda*M}
\begin{split}
\Lambda^*(\rho)&=\{\omega\in \Omega^*\colon \ell^*(\omega)\leq \rho<\ell^*(\omega^{-})\},\\
\Lambda_{M}(\rho)&=\{\omega\in \Omega^*\colon \ell_M(\omega)\leq \rho<\ell_M(\omega^{-})\},\\
\Lambda_{M+1}(\rho)&=\{\omega\in \Omega^*\colon \ell_{M+1}(\omega)\leq \rho<\ell_{M+1}(\omega^{-})\}.
\end{split}
\end{equation}
By Remark~\ref{rem-4.4},
\begin{equation}
\label{e-MM+1}
\bdim {\bf E}_{\beta, M}=\lim_{\rho\to 0}\frac{\log (\#\Lambda_{M}(\rho))}{\log (1/\rho)},\quad \bdim {\bf E}_{\beta, M+1}=\lim_{\rho\to 0}\frac{\log (\#\Lambda_{M+1}(\rho))}{\log (1/\rho)}.
\end{equation}
Meanwhile, an argument similar to the proof of  Lemma~\ref{lem-4.7}(i) shows that
\begin{equation}
\label{e-E*}
\lbdim {\bf E}^*=\liminf_{\rho\to 0}\frac{\log (\#\Lambda^*(\rho))}{\log (1/\rho)},\qquad \ubdim {\bf E}^*=\limsup_{\rho\to 0}\frac{\log (\#\Lambda^*(\rho))}{\log (1/\rho)}.
\end{equation}
Hence to prove \eqref{e-relation}, it suffices to show that
\begin{equation}
\label{e-claimell}
\#\Lambda_{M+1}(\rho)\leq \#\Lambda^*(\rho)\leq \#\Lambda_{M}(\rho).
\end{equation}

 To avoid repetition, we prove only the first inequality in \eqref{e-claimell}; the second follows by a similar argument.  For each $\omega\in \Lambda^*(\rho)$, since
 $$
 \ell_{M+1}(\omega)\leq \ell^*(\omega)\leq \rho,
 $$
 there exists a unique element $\widetilde{\omega}\in \Lambda_{M+1}(\rho)$ which is a prefix of $\omega$. Hence, to show that
 $\#\Lambda_{M+1}(\rho)\leq \#\Lambda^*(\rho),$ it suffices to show that the mapping $\omega\mapsto \widetilde{\omega}$, from $\Lambda^*(\rho)$ to $\Lambda_{M+1}(\rho)$, is onto. Notice that
 $$
 \bigcup_{\omega\in \Lambda^*(\rho)}[\widetilde{\omega}]\supset  \bigcup_{\omega\in \Lambda^*(\rho)}[\omega]=\Omega^\N,
 $$
and that $\{[u]\colon u\in \Lambda_{M+1}(\rho)\}$ is a partition of $\Omega^\N$. It follows that the mapping $\omega\mapsto \widetilde{\omega}$ is surjective.  This proves \eqref{e-claimell}, and therefore \eqref{e-relation}.

In the remaining part of the proof, we show that
\begin{equation}
\label{e-relation*}
\bdim {\bf E}_{\beta, M+1}\geq  \lbdim {\bf E}^*,\qquad  \ubdim {\bf E}^*\geq \bdim {\bf E}_{\beta, M}.
\end{equation}
To this end, by \eqref{e-MM+1} and \eqref{e-E*}, it suffices to show that for all $\epsilon, \delta\in (0,1)$, there exist $\rho_1, \rho_2\in (0,\delta)$ such that
\begin{equation}
\label{e-lambda*}
\# \Lambda_{M+1}(\rho_1)\geq \#\Lambda^*(\rho_1^{1-\epsilon}),\qquad \# \Lambda^*(\rho_2)\geq \#\Lambda_{M}(\rho_2^{1-\epsilon}).
\end{equation}
Without loss of generality, below we will only show that for given $\epsilon,\delta\in (0,1)$, the first inequality in \eqref{e-lambda*} holds for some $\rho_1\in (0,\delta)$. The proof of the second inequality is similar.

Recall that by Lemma~\ref{lem-4.7}(ii), for each $\rho\in (0,1)$, if $\Omega^n\cap \Lambda_{M+1}(\rho)\neq \varnothing$, then
\begin{equation}
\label{e-C1C2}
C_1\log (1/\rho)\leq n\leq C_2\log (1/\rho),
\end{equation}
where  $$C_1=\frac{1}{\log (2M+2)}\quad \mbox{ and }\quad C_2=\frac{2}{\log ((M+1)/2)}.
$$

Let $\epsilon, \delta\in (0,1)$. Take a sufficiently large integer $k$ such that
\begin{equation}
\label{e-N2k}
\frac{N_{2k}}{N_{2k+1}}\leq \min\left\{\frac{\epsilon }{2C_2 \log (M+1)}, \;\frac{C_1}{2C_2}\right\}\quad \mbox{ and }\quad \exp\left(-\frac{N_{2k+1}}{2 C_2}\right)<\delta,
\end{equation}
and set $$\rho_1=\exp\left(-\frac{N_{2k+1}}{2 C_2}\right).$$
Clearly $\rho_1\in (0,\delta)$. In what follows we show that  $\#\Lambda_{M+1}(\rho_1)\geq \# \Lambda^*(\rho_1^{1-\epsilon})$.

 We first claim that
\begin{equation}
\label{e-ell*omega}
\ell^*(\omega)\leq \rho_1^{1-\epsilon}\qquad \mbox{ for each }\omega\in \Lambda_{M+1}(\rho_1).
\end{equation}
To see this, let $\omega=\omega_1\ldots \omega_n\in  \Lambda_{M+1}(\rho_1)$. Then $\ell_{M+1}(\omega)\leq \rho_1$.  By the first inequality in \eqref{e-N2k}, and \eqref{e-C1C2} (in which we take $\rho=\rho_1$), we obtain
$$
\frac{C_1\log(M+1)}{\epsilon}N_{2k}\leq \frac{C_1}{2C_2}\cdot N_{2k+1}\leq n\leq \frac{N_{2k+1}}{2}.
$$
This implies, in particular, that
\begin{equation}
(M+1)^{N_{2k}}\leq \exp\left(\frac{\epsilon}{2C_2}\cdot N_{2k+1}\right)=\rho_1^{-\epsilon}
\end{equation}
and that
$$
N_{2k}<n\leq \frac{N_{2k+1}}{2}.
$$
Hence by definition,  $M_i=M+1$ for $N_{2k}<i\leq n$. It follows that
$$
\ell^*(\omega)=\ell_{M+1}(\omega)\cdot \frac{(M+1)^n}{\prod_{i=1}^nM_i}\leq \ell_{M+1}(\omega)\cdot (M+1)^{N_{2k}}\leq \rho_1\cdot \rho_1^{-\epsilon}=\rho_1^{1-\epsilon}.
$$
This proves  \eqref{e-ell*omega}.

Now by \eqref{e-ell*omega}, we see that for each $\omega\in \Lambda_{M+1}(\rho_1)$, there exists a unique $\omega'\in \Lambda^*(\rho_1^{1-\epsilon})$ which is a prefix of $\omega$.  Since
$$
\bigcup_{\omega\in \Lambda_{M+1}(\rho_1)}[\omega']\supset \bigcup_{\omega\in \Lambda_{M+1}(\rho_1)}[\omega]=\Omega^\N
$$
and $\{[u]\colon u\in \Lambda^*(\rho_1^{1-\epsilon})\}$ is a partition of $\Omega^\N$, we see that the mapping $\omega\mapsto \omega'$, $\Lambda_{M+1}(\rho_1)\to \Lambda^*(\rho_1^{1-\epsilon})$, is onto. Therefore, $\#\Lambda_{M+1}(\rho_1)\geq \# \Lambda^*(\rho_1^{1-\epsilon})$, as desired. This proves \eqref{e-relation*}.

Combining \eqref{e-relation} and \eqref{e-relation*}, we obtain that $\lbdim {\bf E}^*=\bdim {\bf E}_{\beta, M+1}$ and $\ubdim {\bf E}^*=\bdim {\bf E}_{\beta, M}$. The proposition then follows from Theorem~\ref{t1:dimE}(ii).
\end{proof}

 \subsection{Assouad and lower dimensions of ${\bf E}^*$.}
 \label{S-ALdim}
  The main result of this subsection is the following.

\begin{prop}
\label{prop-lowerassouad}
We have
\begin{equation}\label{eq:adim}
		\adim\mathbf{E}^* ~=~ \max_{p\in[0,1]}\frac{H(p)}{\log M - \beta(1-p)\log2},
	\end{equation}
and
\begin{equation}\label{eq:ldim}
		\ldim\mathbf{E}^* ~=~ \max_{p\in[0,1]}\frac{H(p)}{\log (M+1) + \beta p \log2}.
	\end{equation}		
\end{prop}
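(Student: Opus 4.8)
The plan is to reduce both dimensions to a purely symbolic counting problem over ``windows'' of the coding space and then to locate the extremal window using the block structure \eqref{e-Nk}. First I would establish a local analogue of Lemma~\ref{lem-4.7}(i): for a prefix $u\in\Omega^*$ with $R=\ell^*(u)=|I_u|$, the covering number $N_{Rr}(I_u\cap{\bf E}^*)$ is comparable, up to a bounded multiplicative factor, to the number of continuations $v$ with $\ell^*(uv)\le Rr<\ell^*(uv^-)$, since $\{I_{uv}\}$ forms a disjoint cover of $I_u\cap{\bf E}^*$ whose consecutive length ratios are controlled by \eqref{eq-ratios-3*}. This turns the definitions of $\adim$ and $\ldim$ into
\[
\adim{\bf E}^*=\limsup_{r\to0}\ \sup_{u\in\Omega^*}\frac{\log \#\bigl\{v\in\Omega^*:\ \ell^*(uv)\le \ell^*(u)\,r<\ell^*(uv^-)\bigr\}}{\log(1/r)},
\]
together with the corresponding $\liminf$--$\inf$ expression for $\ldim{\bf E}^*$.

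For the lower bound ``$\ge$'' in \eqref{eq:adim} I would exhibit an explicit extremal window inside a long $M$-block. Using \eqref{e-Nk}, for large $k$ all positions in $(\lfloor\beta^2 N_{2k}\rfloor,N_{2k}]$ lie in $[N_{2k-1},N_{2k})$, where $M_i=M$. Take $u$ of length $s=\lfloor\beta N_{2k}\rfloor$ whose digits on $(\lfloor\beta s\rfloor,s]$ are all equal to $1$, and count the continuations $v$ of the fixed length $(1-\beta)N_{2k}$ carrying a $p$-fraction of $1$s. At the terminal length $t=N_{2k}$ the cutoff $\lfloor\beta t\rfloor$ equals $s$, so every window digit sits above the cutoff and contributes a factor $a_{v_j}^{-\beta}$, while the fixed prefix block on $(\lfloor\beta s\rfloor,s]$ switches from above to below the cutoff, contributing an overall factor $2^{\beta(1-\beta)N_{2k}}$. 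A direct computation from \eqref{e-ell*} then gives $\log(1/r)\approx(1-\beta)N_{2k}(\log M-\beta(1-p)\log2)$, and by Lemma~\ref{lem-4.3} the number of such equal-length, disjoint continuations is $\approx\exp((1-\beta)N_{2k}H(p))$, so this window realises the exponent $H(p)/(\log M-\beta(1-p)\log2)$; maximising over $p$ proves ``$\ge$'' in \eqref{eq:adim}. The symmetric window inside an $(M+1)$-block with prefix digits all equal to $0$ yields exponent $H(p)/(\log(M+1)+\beta p\log2)$, furnishing the bound ``$\le$'' in \eqref{eq:ldim}.

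The reverse inequalities are the crux. For ``$\le$'' in \eqref{eq:adim} I would take an arbitrary prefix $u$ of length $s$ and terminal length $t$, split the window $(s,t]$ at the cutoff $\lfloor\beta t\rfloor$ into a below-cutoff part (each $1$ contributing $2^{1-\beta}$) and an above-cutoff part (each $1$ contributing $2^{-\beta}$), and absorb the prefix positions that switch sides into a fixed factor $2^{k_0}$, which is maximised by taking those prefix digits equal to $1$. Writing $\alpha\in(0,\beta]$ for the proportion of the window below the cutoff and bounding the count by Lemma~\ref{lem-4.3}, the normalised exponent of any window contained in an $M$-block is at most
\[
\frac{\alpha H(p_1)+(1-\alpha)H(p_2)}{\log M-(\beta-\alpha)\log2-(1-\beta)\alpha p_1\log2+\beta(1-\alpha)p_2\log2},
\]
uniformly in $\alpha\in(0,\beta]$ and $p_1,p_2\in[0,1]$, and the claim reduces to showing this never exceeds $\max_{p}H(p)/(\log M-\beta(1-p)\log2)$ (the boundary $\alpha\to0$ gives equality, while $\alpha=\beta$ degenerates to the box value $D(\log M)$). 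Windows in $(M+1)$-blocks only enlarge the leading denominator term and hence lower the exponent, and windows straddling a block boundary are negligible because \eqref{e-Nk} forces the fraction of the window spent in the shorter block to tend to $0$. The lower bound ``$\ge$'' in \eqref{eq:ldim} follows from the mirror-image inequality, now with the prefix digits chosen equal to $0$ and with $(M+1)$-blocks providing the sparsest local structure.

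I expect the analytic inequality displayed above — showing that no window beats the one-parameter extremal family — to be the main obstacle: it is a constrained optimisation in the three variables $\alpha,p_1,p_2$, and one must verify that the supremum is attained in the limit $\alpha\to0$, where it collapses to a single effective parameter. A secondary technical difficulty is the quantifier management for $\ldim$: since the lower dimension is a $\liminf_{r\to0}\inf$, the matching lower bound must hold \emph{uniformly} over all small scales and all locations, and this is precisely where the rapid-growth hypothesis \eqref{e-Nk} is needed to exclude anomalously sparse windows near the block interfaces.
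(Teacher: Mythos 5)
Your reduction to the symbolic counting problem (the analogue of Lemma~\ref{lem-5.4}), your extremal windows for the lower bound of $\adim$ (all-$1$ prefix deep inside an $M$-block) and for the upper bound of $\ldim$ (all-$0$ prefix deep inside an $(M+1)$-block) all match the paper's Steps~2 and~3, and the exponent computations there are correct. But the two remaining inequalities, which you yourself identify as the crux, are not actually proved, and in both cases the missing step is substantial. For the upper bound of $\adim$ you reduce matters to showing that a three-parameter expression in $(\alpha,p_1,p_2)$ never exceeds $\max_p H(p)/(\log M-\beta(1-p)\log 2)$, and you leave this constrained optimisation open. This inequality is true but it is also unnecessary: the paper sidesteps it entirely by using the crude two-sided bound \eqref{e-lengthuw1}, $\ell^*(u\omega)/\ell^*(u)\le 2^{1+\beta n-\beta{\mathsf L}_1(\omega)}M^{-n}$, so that the \emph{only} statistic needed is the total number $j={\mathsf L}_1(\omega)$ of $1$s in the window; the count $\binom{n}{j}$ is then compared to $(M^n2^{\beta(j-n)})^{D_a}\lesssim (1/\rho)^{D_a}$ directly from the \emph{definition} of $D_a$, with no optimisation over how the window sits relative to the cutoff $\lfloor\beta t\rfloor$. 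Splitting the window at the cutoff, as you do, refines the bound but creates exactly the analytic obstacle you then cannot resolve.

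The more serious gap is the lower bound of $\ldim$. This is not the ``mirror image'' of the $\adim$ upper bound, because the quantifiers reverse: you must bound $\#\Lambda^{*,u}(\rho)$ from \emph{below}, uniformly over \emph{every} prefix $u$ and every small $\rho$, and you cannot choose the prefix digits or the block. The difficulty is not, as you suggest, windows straddling block interfaces (the universal bounds $\prod_i M_{k+i}\le(M+1)^n$ and $a_{v|\cdot}\ge1$ in \eqref{e-lengthuw1} already handle all locations uniformly, so \eqref{e:Nk} is not needed here at all --- it is needed for the \emph{other} two bounds, to locate extremal windows deep inside blocks). The real issue is that $\Lambda^{*,u}(\rho)$ is a cut-set containing words of roughly $\log(1/\rho)$ different lengths, so one must show that words with $1$-frequency near the optimal $p_1$ are not spread too thinly across lengths or absorbed into short prefixes. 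The paper resolves this with a measure-theoretic pigeonhole: take the Bernoulli measure $\eta$ with weights $(1-p_1,p_1)$, use the Shannon--McMillan--Breiman theorem and Egoroff to get a set $X$ with $\eta(X)>1/2$ on which cylinder measures and digit frequencies are controlled, note that the cylinders $\{[\omega]:\omega\in\Lambda^{*,u}(\rho)\}$ partition $\Omega^{\N}$, and conclude that some single length $\widetilde n$ captures $\eta$-mass $\gtrsim 1/\log(1/\rho)$ of $X$, forcing $\#\Lambda^{*,u}(\rho)\gtrsim e^{\widetilde n(H(p_1)-\delta)}/\log(1/\rho)$ while $1/\rho\le 2^{\beta(p_1+\delta)\widetilde n}(M+1)^{\widetilde n}$. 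Your proposal contains no mechanism playing this role, so the $\liminf$--$\inf$ bound is unestablished.
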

Before proving the above proposition, we first introduce some notation. For $u\in \Omega^*$ and $0<\rho<1$, we define
\begin{equation}
\label{e-Lambda*u}
\Lambda^{*,u}(\rho)=\left\{\omega\in \Omega^*\colon \frac{\ell^*(u\omega)}{\ell^*(u)}\leq \rho<\frac{\ell^*(u\omega^{-})}{\ell^*(u)}\right\}.
\end{equation}

Recall that $\{I_\omega\colon \omega\in \Omega^*\}$ denotes the collection of generating intervals of the Cantor set ${\bf E}^*$. The following lemma simply follows from the Cantor construction of ${\bf E}^*$ and the easily verified fact that $\ell^*(\omega)/\ell^*(\omega^-)\geq \frac{1}{2M+2}$ for each $\omega$.

\begin{lem}
\begin{itemize}
\item[(i)] For every generating interval $I_\omega$ of ${\bf E}^*$, the two gap intervals adjacent to $I_\omega$  have lengths greater than $|I_\omega|$.
\item[(ii)] Let $u\in \Omega^*$ and $\rho\in (0,1)$. Then the intervals $I_{u\omega}$, for $\omega\in \Lambda^{*,u}(\rho)$, are disjoint subintervals of $I_u$,  each having length between $\frac{1}{2M+2}\rho \ell^*(u)$ and $\rho \ell^*(u)$. Moreover,
$$
I_u\cap {\bf E}^*=\left(\bigcup_{\omega\in \Lambda^{*,u}(\rho)} I_{u\omega}\right)\cap {\bf E}^*.
$$
\end{itemize}
\end{lem}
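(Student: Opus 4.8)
The plan is to read off both statements from the nested structure of the generating intervals together with the ratio bounds recorded in \eqref{eq-ratios-3*}, namely $\tfrac{1}{2M+2}\le \tfrac{|I_{\omega i}|}{|I_\omega|}\le \tfrac{2}{M}$ for all $\omega\in\Omega^*$, $i\in\{0,1\}$, and the consequence $\ell^*(\eta)/\ell^*(\eta^-)\ge \tfrac{1}{2M+2}$ (take $\eta=\omega i$). Everything reduces to careful bookkeeping; there is no deep obstacle, so the main work is locating the relevant gaps and intervals precisely.

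For part (i), I would first identify the two gaps adjacent to a given $I_\omega$. Since $I_v=I_{v0}\cup G_v\cup I_{v1}$ is a disjoint decomposition, the gap immediately to the right of a generating interval is found by stripping the trailing $1$'s: if $\omega=w01^k$ then the right-adjacent gap is $G_w$ and $I_\omega\subseteq I_{w0}$, while if $\omega=1^n$ no right-adjacent gap exists; symmetrically the left-adjacent gap is $G_{w'}$ with $w'1$ a prefix of $\omega$ (so $I_\omega\subseteq I_{w'1}$), unless $\omega=0^n$. In every case the relevant gap is $G_w$ with $I_\omega$ contained in one child of $I_w$. From \eqref{eq-ratios-3*} I then estimate $|G_w|=|I_w|-|I_{w0}|-|I_{w1}|\ge (1-\tfrac4M)|I_w|$ and $|I_w|\ge \tfrac{M}{2}|I_{wi}|$, so that $|G_w|\ge \tfrac{M-4}{2}\,|I_{wi}|\ge \tfrac{M-4}{2}\,|I_\omega|>|I_\omega|$, using $M\ge 100$. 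This yields (i), with the understanding that the boundary intervals $I_{0^n}$, $I_{1^n}$, $I_\varnothing$ have a one-sided or empty collection of adjacent gaps, to which the same bound applies.

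For part (ii), the key structural point is that $\Lambda^{*,u}(\rho)$ is a prefix-free stopping set that exactly covers the cylinder $[u]$. For each $\eta\in\Omega^\N$ the quantities $r_n\coloneqq \ell^*(u(\eta_1\ldots\eta_n))/\ell^*(u)$ satisfy $r_0=1$, are strictly decreasing (each step multiplies by a factor $\le 2/M<1$ by \eqref{eq-ratios-3*}) and tend to $0$; since $\rho<1$ there is a unique $n\ge1$ with $r_n\le\rho<r_{n-1}$, i.e.\ a unique prefix $\eta_1\ldots\eta_n\in\Lambda^{*,u}(\rho)$. This simultaneously shows that every element of $\Lambda^{*,u}(\rho)$ is nonempty, that $\Lambda^{*,u}(\rho)$ is prefix-free, and that $\{[u\omega]:\omega\in\Lambda^{*,u}(\rho)\}$ partitions $[u]$. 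Disjointness of the intervals $I_{u\omega}$ then follows from the net property \eqref{eq_net} (no word is a prefix of another), and $I_{u\omega}\subseteq I_u$ is immediate since $u$ is a prefix of $u\omega$. The length bounds are read off directly: $|I_{u\omega}|=\ell^*(u\omega)\le\rho\,\ell^*(u)$ from the defining inequality, while $\ell^*(u\omega)\ge\tfrac{1}{2M+2}\,\ell^*(u\omega^-)>\tfrac{1}{2M+2}\,\rho\,\ell^*(u)$ using $\ell^*(\eta)/\ell^*(\eta^-)\ge\tfrac{1}{2M+2}$ together with $\ell^*(u\omega^-)>\rho\,\ell^*(u)$.

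It remains to establish the covering identity, where the inclusion $\supseteq$ is trivial. For $\subseteq$, I would take $x\in I_u\cap{\bf E}^*$ and use surjectivity of $\pi$ to write $x=\pi(\xi)$; then $x$ lies in both the level-$|u|$ intervals $I_u$ and $I_{\xi_1\ldots\xi_{|u|}}$, which by \eqref{eq_net} must coincide, forcing $\xi$ to begin with $u$, say $\xi=u\eta$. Choosing the unique prefix $\omega\in\Lambda^{*,u}(\rho)$ of $\eta$ gives $x=\pi(u\eta)\in I_{u\omega}$, completing the inclusion. The only place requiring genuine care, rather than treating it as obvious, is this last step: one must invoke the net property to rule out the apparent ambiguity at shared endpoints of adjacent generating intervals and conclude that the coding of $x$ really does extend $u$.
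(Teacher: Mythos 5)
Your proof is correct and takes essentially the route the paper intends: the paper states this lemma without proof, remarking that it ``simply follows from the Cantor construction of ${\bf E}^*$ and the easily verified fact that $\ell^*(\omega)/\ell^*(\omega^-)\geq \frac{1}{2M+2}$,'' and your write-up is precisely that routine verification, carried out correctly via the ratio bounds \eqref{eq-ratios-3*}, the net property \eqref{eq_net}, and the unique-stopping-prefix structure of $\Lambda^{*,u}(\rho)$. The only superfluous point is your closing caution about shared endpoints: since every gap has positive length (indeed $|G_w|\ge (1-\frac{4}{M})|I_w|>0$), distinct same-level generating intervals are genuinely disjoint and the coding map is injective, so no ambiguity actually arises there, though your appeal to \eqref{eq_net} at that step is harmless and valid.
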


The following result follows from the above lemma and the definitions of  Assouad and lower dimensions by a routine argument; we omit the details.

\begin{lem}
\label{lem-5.4}
 We have
$$
\adim{{\bf E}^*}=\limsup_{\rho\to 0}\sup_{u\in \Omega^*} \frac{\log(\#\Lambda^{*,u}(\rho))}{\log(1/\rho)},
$$
and
$$
\ldim{{\bf E}^*}=\liminf_{\rho\to 0}\inf_{u\in \Omega^*} \frac{\log(\#\Lambda^{*,u}(\rho))}{\log(1/\rho)}.
$$

\end{lem}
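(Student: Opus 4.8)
The plan is to prove both identities by sandwiching the ball-counting quantity appearing in the definitions of $\adim$ and $\ldim$ (Section~\ref{S-2}) between constant multiples of the combinatorial counts $\#\Lambda^{*,u}(\rho)$. The bridge between the two is a purely local comparison, valid for every $u\in\Omega^*$ and $\rho\in(0,1)$:
\[
\frac{1}{2M+4}\,\#\Lambda^{*,u}(\rho)\ \le\ N_{\rho\ell^*(u)}(\mathbf{E}^*\cap I_u)\ \le\ \#\Lambda^{*,u}(\rho).
\]
The upper bound is immediate from part (ii) of the preceding lemma, since the intervals $\{I_{u\omega}\colon\omega\in\Lambda^{*,u}(\rho)\}$ already cover $\mathbf{E}^*\cap I_u$ by intervals of length at most $\rho\ell^*(u)$. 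For the lower bound I would use part (i): two spatially consecutive intervals $I_{u\omega},I_{u\omega'}$ are separated by a gap $G_{uw}$, where $uw$ is their branch point, and part (i) forces $|G_{uw}|>|I_{uw0}|\ge\frac{1}{2M+2}\rho\ell^*(u)$; hence a single covering interval of length $\rho\ell^*(u)$ meets at most $2M+4$ of the disjoint, $E$-charged intervals $I_{u\omega}$, giving the stated count.

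Next I would translate between arbitrary balls $[x-R,x+R]$ centred at $x\in\mathbf{E}^*$ and single generating intervals, in the two opposite directions required. For the directions $\adim\le$ and (by exhibiting specific balls) $\ldim\le$, given $x$ and $R$ I take $u$ to be the longest word with $\mathbf{E}^*\cap[x-R,x+R]\subseteq I_u$; then the ball meets both children of $I_u$, so it crosses $G_u$, and part (i) yields $|I_u|=|I_{u0}|+|G_u|+|I_{u1}|<3|G_u|\le 6R$. For the directions $\adim\ge$ and $\ldim\ge$ I instead place a generating interval \emph{inside} the ball: writing $(\omega_n)=\pi^{-1}(x)$ and letting $u$ be the shortest prefix with $I_u\subseteq[x-R,x+R]$, the interval $I_{u^-}$ protrudes from the ball, so $|I_{u^-}|>R$ and hence $|I_u|\ge\frac{1}{2M+2}|I_{u^-}|>\frac{R}{2M+2}$. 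In every case $|I_u|\asymp R$ with constants depending only on $M$, and setting $\rho=Rr/\ell^*(u)$ makes $\rho\asymp r$.

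Assembling the four inequalities then combines the local comparison at scale $\rho=Rr/\ell^*(u)$ with the observation that $\log(1/\rho)/\log(1/r)\to 1$ as $r\to0$, because $\rho\asymp r$ (or, in the enclosing case, $\rho>r/6$). The enclosed interval $I_u\subseteq[x-R,x+R]$ gives $N_{Rr}(\mathbf{E}^*\cap[x-R,x+R])\ge N_{\rho\ell^*(u)}(\mathbf{E}^*\cap I_u)\ge\frac{1}{2M+4}\#\Lambda^{*,u}(\rho)$, yielding $\adim\ge$ and $\ldim\ge$; the enclosing interval gives the reverse, yielding $\adim\le$ and $\ldim\le$. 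To realise the counts sharply when proving $\adim\ge$ and $\ldim\le$, I would centre the ball at the left endpoint $x=\pi(u0^\infty)$ of a chosen $I_u$ and take $R=|I_u|$: part (i) makes the left-adjacent gap longer than $R$, so $[x-R,x]$ contains no point of $\mathbf{E}^*$ other than $x$, while $[x,x+R]=I_u$; thus $\mathbf{E}^*\cap[x-R,x+R]=\mathbf{E}^*\cap I_u$ and the comparison becomes an equality up to the constant $2M+4$. The $\limsup$--$\sup$ bookkeeping for $\adim$ and the $\liminf$--$\inf$ bookkeeping for $\ldim$ then transfer directly, using that for each fixed small $\rho$ the count $\#\Lambda^{*,u}(\rho)$ is bounded uniformly in $u$ (the per-symbol ratios lie in $[\frac{1}{2M+2},\frac{2}{M}]$) in order to discard the degenerate range where $\rho$ stays away from~$0$.

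The main obstacle is the lower bound for $\ldim$, i.e.\ forcing the count to be large for \emph{every} ball, since $\mathbf{E}^*\cap[x-R,x+R]$ is not a generating interval and could a priori carry far less mass than the surrounding $I_u$. The resolution is exactly the enclosed-interval construction above, which guarantees that every ball swallows a full generating interval of length comparable to $R$; this step, together with the clean-ball construction giving the matching upper bound, is where part (i) of the preceding lemma—that gaps dominate their neighbouring intervals—is indispensable. The substitution of these comparisons into the dimension formulas of Theorem~\ref{thm-5.1} is then a separate, subsequent computation, and the remaining manipulation of the ratios $\log(1/\rho)/\log(1/r)$ I would leave implicit.
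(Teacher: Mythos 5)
Your proposal is correct and is precisely the ``routine argument'' the paper deliberately omits: it derives both formulas from parts (i) and (ii) of the preceding unnumbered lemma, using the covering comparison $\frac{1}{2M+4}\#\Lambda^{*,u}(\rho)\le N_{\rho\ell^*(u)}(\mathbf{E}^*\cap I_u)\le\#\Lambda^{*,u}(\rho)$ together with the gap-domination property to pass between arbitrary balls and generating intervals with $|I_u|\asymp R$, exactly as the authors intend. All four directions, the degenerate regime where $\rho$ stays bounded away from $0$, and the clean-ball construction at $x=\pi(u0^\infty)$ check out (the only unmentioned triviality being the boundary case $u=0^k$, where $[x-R,x)$ misses $\mathbf{E}^*$ simply because $\mathbf{E}^*\subset[0,1]$).
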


Recall that $a_0=1$ and $a_1=2$. For $I=i_1\ldots i_m\in \Omega^m$, we write for brevity $a_I=\prod_{k=1}^ma_{i_k}$. For a word $\omega\in \Omega^*$, let  ${\mathsf L}_1(\omega)$ denote the number of times the letter $1$ appears in  $\omega$. Moreover, for $\omega=\omega_1\ldots \omega_k\in \Omega^*$, write $\omega|n \coloneqq \omega_1\ldots \omega_n$ for $n\leq k$.
\begin{lem}
Let $u=u_1\ldots u_k\in \Omega^k$ and $\omega=\omega_1\ldots \omega_n\in \Omega^n$. Then
\begin{equation}
\label{e-lengthuw}
\frac{\ell^*(u\omega)}{\ell^*(u)}=a_\omega^{-\beta}\cdot a_{v|(\lfloor\beta(n+k)\rfloor-\lfloor\beta k\rfloor)}\cdot \frac{1}{\prod_{i=1}^n M_{k+i}},
\end{equation}
where $\nu \coloneqq u_{\lfloor\beta k\rfloor+1}\ldots u_k \omega_1\ldots \omega_n$. Consequently,
\begin{equation}
\label{e-lengthuw1}
 2^{-\beta {\mathsf L}_1(\omega)}(M+1)^{-n}\leq \frac{\ell^*(u\omega)}{\ell^*(u)}\leq 2^{1+\beta n-\beta {\mathsf L}_1(\omega)}M^{-n}.
\end{equation}
\end{lem}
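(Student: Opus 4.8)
The plan is to expand the definition \eqref{e-ell*} of $\ell^*$ for the two words $u\omega$ (of length $k+n$) and $u$ (of length $k$), and to factor the quotient $\ell^*(u\omega)/\ell^*(u)$ into three independent pieces: the ratio of the ``$M$-products'', the ratio of the denominators $(a_{\,\cdot}\cdots)^{\beta}$, and the ratio of the numerators $a_{\,\cdot}\cdots a_{\,\cdot}$. Writing $m=\lfloor\beta(n+k)\rfloor$ and $m_0=\lfloor\beta k\rfloor$, the $M$-product ratio collapses immediately, since $\prod_{i=1}^{k+n}M_i^{-1}\big/\prod_{i=1}^{k}M_i^{-1}=\prod_{i=k+1}^{k+n}M_i^{-1}=\prod_{i=1}^{n}M_{k+i}^{-1}$, and the denominator ratio collapses to $a_\omega^{-\beta}$ because all the factors $a_{u_j}$ cancel and only $(a_{\omega_1}\cdots a_{\omega_n})^{-\beta}$ survives. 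These two reductions are routine bookkeeping.

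The crux is the numerator ratio $\big(a_{(u\omega)_1}\cdots a_{(u\omega)_m}\big)\big/\big(a_{u_1}\cdots a_{u_{m_0}}\big)$, which I claim equals $a_{v|(m-m_0)}$. First I would note that $m\ge m_0$ by monotonicity of the floor, and that $m_0\le k$, so the first $m_0$ symbols of $u\omega$ are exactly $u_1,\dots,u_{m_0}$; hence the ratio is the single product $\prod_{j=m_0+1}^{m}a_{(u\omega)_j}$ of $m-m_0$ consecutive factors. The remaining task is purely combinatorial: reindexing by $r=j-m_0$, I would verify, splitting into the cases $m_0+r\le k$ and $m_0+r>k$, that the $r$-th factor $a_{(u\omega)_{m_0+r}}$ coincides with $a_{v_r}$, where $v=u_{m_0+1}\ldots u_k\,\omega_1\ldots\omega_n$ is the word from the statement; in the first case $(u\omega)_{m_0+r}=u_{m_0+r}=v_r$, and in the second $(u\omega)_{m_0+r}=\omega_{m_0+r-k}=v_r$. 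I would also record that $v|(m-m_0)$ is well-defined, since $\beta<1$ forces $m\le k+n-1$, so $m-m_0$ does not exceed the length $(k-m_0)+n$ of $v$. This index bookkeeping, with the boundary at $j=k$, is the only genuinely delicate point, and is where I expect to spend the most care.

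For the consequence \eqref{e-lengthuw1}, I would bound the three factors of \eqref{e-lengthuw} separately. Since $a_0=1$ and $a_1=2$, one has $a_\omega=2^{{\mathsf L}_1(\omega)}$, hence $a_\omega^{-\beta}=2^{-\beta{\mathsf L}_1(\omega)}$ exactly. For the middle factor I would use $1\le a_{v|(m-m_0)}\le 2^{\,m-m_0}$ together with the estimate $m-m_0=\lfloor\beta(n+k)\rfloor-\lfloor\beta k\rfloor<\beta n+1$, yielding $1\le a_{v|(m-m_0)}\le 2^{\beta n+1}$. For the $M$-factor I would use $M^n\le\prod_{i=1}^{n}M_{k+i}\le(M+1)^n$, which holds because each $M_{k+i}\in\{M,M+1\}$ by \eqref{e-Nk}. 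Multiplying the three lower bounds gives the left-hand inequality $2^{-\beta{\mathsf L}_1(\omega)}(M+1)^{-n}$, and multiplying the three upper bounds gives the right-hand inequality $2^{1+\beta n-\beta{\mathsf L}_1(\omega)}M^{-n}$, which completes the proof.
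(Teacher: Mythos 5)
Your proposal is correct and follows essentially the same route as the paper: expand the definition \eqref{e-ell*} for $u\omega$ and $u$, cancel the $M$-products and the $a_u^\beta$ factors to get \eqref{e-lengthuw}, and then deduce \eqref{e-lengthuw1} from $a_\omega=2^{{\mathsf L}_1(\omega)}$, $1\le a_{v|(\lfloor\beta(n+k)\rfloor-\lfloor\beta k\rfloor)}\le 2^{\beta n+1}$, and $M^n\le\prod_{i=1}^n M_{k+i}\le (M+1)^n$. The only difference is that you spell out the index bookkeeping for the numerator ratio (the case split at $j=k$), which the paper leaves implicit.
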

\begin{proof}
Clearly,  \eqref{e-lengthuw1} follows from  \eqref{e-lengthuw}, using the inequality $\lfloor\beta(n+k)\rfloor-\lfloor\beta k\rfloor\leq 1+\beta n$.  To see \eqref{e-lengthuw}, by the definition of $\ell^*(\cdot)$ (see \eqref{e-ell*}), we obtain
\begin{align*}
\frac{\ell^*(u\omega)}{\ell^*(u)}&=\frac{a_{(u\omega)|\lfloor\beta(n+k)\rfloor}}{a_{u\omega}^\beta}\cdot \frac{a_u^\beta}{a_{u|\lfloor\beta k\rfloor}}\cdot \frac{1}{\prod_{i=1}^n M_{k+i}}\\
&=a_\omega^{-\beta}\cdot a_{v|(\lfloor\beta(n+k)\rfloor-\lfloor\beta k\rfloor)}\cdot \frac{1}{\prod_{i=1}^n M_{k+i}},
\end{align*}
as desired.
\end{proof}
As a direct consequence of the above lemma, we have the following.

\begin{cor}
\label{cor-5.6}
 Let $u=u_1\ldots u_k\in \Omega^k$ with $u=0^k$ or $1^k$, and $\omega=\omega_1\ldots \omega_n\in \Omega^n$. Suppose that $n\leq \frac{(1-\beta)k}{\beta}$. Then
    $$
\frac{\ell^*(u\omega)}{\ell^*(u)}=\left\{
\begin{array}{ll}
2^{-\beta {\mathsf L}_1(\omega)}\cdot\left(\prod_{i=1}^n M_{k+i}\right)^{-1}, &\mbox{ if }u=0^k,\\[\medskipamount]
2^{-\beta {\mathsf L}_1(\omega)+\lfloor\beta(n+k)\rfloor-\lfloor\beta k\rfloor}\cdot \left(\prod_{i=1}^n M_{k+i}\right)^{-1},&\mbox{ if }u=1^k.
\end{array}
\right.
    $$
\end{cor}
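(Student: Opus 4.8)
The plan is to derive the corollary directly from equation~\eqref{e-lengthuw} in the preceding lemma, whose right-hand side reads $a_\omega^{-\beta}\cdot a_{v|m}\cdot(\prod_{i=1}^n M_{k+i})^{-1}$ with $m \coloneqq \lfloor\beta(n+k)\rfloor-\lfloor\beta k\rfloor$ and $v = u_{\lfloor\beta k\rfloor+1}\ldots u_k\,\omega_1\ldots\omega_n$. The entire task reduces to simplifying the two factors $a_\omega^{-\beta}$ and $a_{v|m}$ under the two special forms of $u$ and the size constraint on $n$.

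First I would record that, since $a_0=1$ and $a_1=2$, one has $a_\omega=2^{{\mathsf L}_1(\omega)}$ for every word $\omega$, hence $a_\omega^{-\beta}=2^{-\beta{\mathsf L}_1(\omega)}$. This factor is common to both cases and accounts for the $2^{-\beta{\mathsf L}_1(\omega)}$ appearing in each branch.

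The crux is to evaluate $a_{v|m}$, and here the hypothesis $n\le (1-\beta)k/\beta$ is used exactly once. Rearranging it gives $\beta(n+k)\le k$, so $\lfloor\beta(n+k)\rfloor\le k$ and therefore $m=\lfloor\beta(n+k)\rfloor-\lfloor\beta k\rfloor\le k-\lfloor\beta k\rfloor$. Since the first $k-\lfloor\beta k\rfloor$ symbols of $v$ are precisely the tail $u_{\lfloor\beta k\rfloor+1}\ldots u_k$ of $u$, the bound $m\le k-\lfloor\beta k\rfloor$ guarantees that the prefix $v|m$ is built entirely from symbols of $u$, with none of the $\omega_j$ contributing. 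This index-tracking step---confirming that $\omega$ never enters $v|m$---is the only genuine (and very minor) obstacle; everything else is substitution.

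Finally I would split into the two cases. When $u=0^k$, all symbols of $v|m$ equal $0$, so $a_{v|m}=1$ and \eqref{e-lengthuw} collapses to $2^{-\beta{\mathsf L}_1(\omega)}(\prod_{i=1}^n M_{k+i})^{-1}$. When $u=1^k$, all symbols of $v|m$ equal $1$, so $a_{v|m}=2^m=2^{\lfloor\beta(n+k)\rfloor-\lfloor\beta k\rfloor}$, and \eqref{e-lengthuw} becomes $2^{-\beta{\mathsf L}_1(\omega)+\lfloor\beta(n+k)\rfloor-\lfloor\beta k\rfloor}(\prod_{i=1}^n M_{k+i})^{-1}$. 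These are exactly the two asserted formulas, which completes the proof.
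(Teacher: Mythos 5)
Your proof is correct and follows exactly the route the paper intends: the paper states Corollary~\ref{cor-5.6} as ``a direct consequence'' of \eqref{e-lengthuw} without writing out the details, and your argument supplies precisely those details, with the key observation (that $n\le(1-\beta)k/\beta$ forces $\lfloor\beta(n+k)\rfloor-\lfloor\beta k\rfloor\le k-\lfloor\beta k\rfloor$, so the prefix $v|m$ draws only on symbols of $u$) handled correctly.
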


We still need the following result, whose part (i) is analogous to Lemma~\ref{lem-4.7}(ii).
\begin{lem}
\label{lem-5.6}
Let $u\in \Omega^*$ and $0<\rho<1$. Then the following properties hold.
\begin{itemize}
\item[(i)] If $\Omega^n\cap \Lambda^{*,u}(\rho)\neq \varnothing$, then
$$
\frac{\log (1/\rho)}{\log (2M+2)}\leq n\leq  \frac{2\log (1/\rho)}{\log (M/2)}
$$
provided that $0<\rho<(2M+2)^{-1}$.
\item[(ii)] For every $\omega\in \Omega^n\cap \Lambda^{*,u}(\rho)$,
$$
 \frac{1}{4M+4}\cdot M^n 2^{\beta ({\mathsf L}_1(\omega)-n)}\leq \frac{1}{\rho}\leq 2^{\beta {\mathsf L}_1(\omega)}(M+1)^n.
$$
\end{itemize}
\end{lem}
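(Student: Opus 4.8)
The plan is to derive both statements from the two-sided estimate \eqref{e-lengthuw1}, together with the defining inequalities of $\Lambda^{*,u}(\rho)$ and the elementary ratio bound $\ell^*(u\omega)/\ell^*(u\omega^-)\ge (2M+2)^{-1}$ recorded just above. I would prove (ii) first, and then obtain (i) from (ii) by inserting the trivial bounds ${\mathsf L}_1(\omega)\in[0,n]$.

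For (ii), fix $\omega=\omega_1\ldots\omega_n\in\Omega^n\cap\Lambda^{*,u}(\rho)$, so that
\[
\frac{\ell^*(u\omega)}{\ell^*(u)}\le\rho<\frac{\ell^*(u\omega^-)}{\ell^*(u)}.
\]
The upper estimate $1/\rho\le 2^{\beta{\mathsf L}_1(\omega)}(M+1)^n$ is immediate: combine $\rho\ge\ell^*(u\omega)/\ell^*(u)$ with the left inequality of \eqref{e-lengthuw1}. For the lower estimate I would use the right-hand defining inequality, first bounding $\ell^*(u\omega^-)\le(2M+2)\,\ell^*(u\omega)$ via the ratio bound and then applying the right inequality of \eqref{e-lengthuw1}; this gives $\rho<(4M+4)\,2^{\beta(n-{\mathsf L}_1(\omega))}M^{-n}$, which rearranges at once into the desired $1/\rho\ge(4M+4)^{-1}M^n2^{\beta({\mathsf L}_1(\omega)-n)}$. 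The lower bound on $n$ in (i) then follows by feeding ${\mathsf L}_1(\omega)\le n$ into the upper estimate of (ii): $1/\rho\le 2^{\beta n}(M+1)^n\le (2M+2)^n$, whence $n\ge\log(1/\rho)/\log(2M+2)$.

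The main obstacle is the upper bound on $n$, where getting the constant right takes a little care. A direct substitution of ${\mathsf L}_1(\omega)\ge 0$ into the lower estimate of (ii) yields only $n\log(M/2)\le\log(4M+4)+\log(1/\rho)$, which is off by $\log 2$ from the claimed bound under the hypothesis $\rho<(2M+2)^{-1}$. To close this gap I would instead apply \eqref{e-lengthuw1} to the truncation $\omega^-\in\Omega^{n-1}$ and use the strict inequality $\rho<\ell^*(u\omega^-)/\ell^*(u)$ directly. This gives $\rho<2^{1+\beta(n-1)}M^{-(n-1)}\le 2\,(M/2)^{-(n-1)}$ (the last step using $(1-\beta)(n-1)\ge0$), hence $(n-1)\log(M/2)<\log 2+\log(1/\rho)$, and adding $\log(M/2)$ to both sides gives $n\log(M/2)<\log M+\log(1/\rho)$.

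Finally, since $\rho<(2M+2)^{-1}<1/M$ forces $\log M<\log(1/\rho)$, the right-hand side is bounded by $2\log(1/\rho)$, yielding $n<2\log(1/\rho)/\log(M/2)$ and in particular the claimed bound $n\le 2\log(1/\rho)/\log(M/2)$. For the degenerate case $n=0$ the upper bound is trivial, and the already-established lower bound on $n$ shows that the hypothesis $\rho<(2M+2)^{-1}$ forces $n\ge1$, so the truncation $\omega^-$ is well defined whenever it is needed.
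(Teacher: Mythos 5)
Your proof is correct and follows essentially the same route as the paper: part (ii) is obtained exactly as in the paper by combining the defining inequalities of $\Lambda^{*,u}(\rho)$ with \eqref{e-lengthuw1} and the ratio bound $\ell^*(u\omega)/\ell^*(u\omega^-)\ge(2M+2)^{-1}$. The only divergence is in the upper bound of (i): the paper derives it directly from the iterated per-symbol estimate $\ell^*(u\omega)/\ell^*(u)\le(M/2)^{-n}$ together with $\rho/(2M+2)<\ell^*(u\omega)/\ell^*(u)$, which avoids the spare factor of $2$ that you correctly identified and then absorbed by applying \eqref{e-lengthuw1} to $\omega^-$ and using $\log M<\log(1/\rho)$ --- your workaround is valid, just slightly less direct.
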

\begin{proof}
By the definition of the length function $\ell^*$ (see \eqref{e-ell*}) and an argument similar to the proof of \eqref{eq-ratios-3}, we have
$$
\frac{1}{2M+2}\leq \frac{\ell^*(\omega i)}{\ell^*(\omega)}\leq \frac{2}{M} \quad \mbox{ for all }\omega\in \Omega^* \mbox{ and }i\in \{0,1\}.
$$
It follows that
$$
(2M+2)^{-n}\leq \frac{\ell^*(u\omega)}{\ell^*(u)}\leq (M/2)^{-n} \qquad \mbox{ for all }\omega\in \Omega^n
$$
and that
\begin{equation}
\label{e-2M+2}
\frac{\rho}{2M+2}<\frac{1}{2M+1}\cdot \frac{\ell^*(u\omega^{-})}{\ell^*(u)}\leq \frac{\ell^*(u\omega)}{\ell^*(u)}\leq \rho\quad \mbox{ if }\omega\in \Omega^n\cap \Lambda^{*,u}(\rho).
\end{equation}
Hence if $\omega\in \Omega^n\cap \Lambda^{*,u}(\rho)$, then
$$
(2M+2)^{-n}\leq \frac{\ell^*(u\omega)}{\ell^*(u)}\leq \rho\quad\mbox{ and }\quad  \frac{\rho}{2M+2}<\frac{\ell^*(u\omega)}{\ell^*(u)}\leq (M/2)^{-n},
$$
which implies
$$
\frac{\log (1/\rho)}{\log (2M+2)}\leq n\leq \frac{\log ((2M+2)\rho^{-1})}{\log (M/2)}\leq \frac{2\log(1/\rho)}{\log (M/2)},
$$
provided that  $0<\rho\leq (2M+2)^{-1}$. This proves (i).

Part (ii) follows directly from \eqref{e-2M+2} and \eqref{e-lengthuw1}.
  \end{proof}

\begin{proof}[Proof of Proposition \ref{prop-lowerassouad}] We divide the proof into 4 steps.

{\it Step 1: upper bound for $\adim{\bf E}^*$}. For our convenience, write
\begin{equation}
\label{e-Da}
D_a \coloneqq \max_{p\in[0,1]}\frac{H(p)}{\log M - \beta(1-p)\log2}.
\end{equation}
Let $c$ be the constant given as in Lemma~\ref{lem-4.3}.  We will show that for each $u\in \Omega^*$ and $\rho\in (0,\frac{1}{2M})$,
\begin{equation}
\label{e-upperassouad1}
\#\Lambda^{*,u}(\rho)\leq 2\cdot \left(\frac{2\log(1/\rho)}{\log (M/2)}\right)^{2+c}  \cdot \left(\frac{4 M+4}{\rho}\right)^{D_a},
\end{equation}
which, together with Lemma~\ref{lem-5.4}, implies that $\adim {\bf E}^*\leq D_a$.

To prove \eqref{e-upperassouad1}, fix $u\in \Omega^*$ and $\rho\in (0,\frac{1}{2M})$. Let ${\mathcal N}_{u,\rho}$ denote the set of integers $n$ such that $\Omega^n\cap \Lambda^{*,u}(\rho)\neq \varnothing$. By Lemma~\ref{lem-5.6}(i),
\begin{equation}
\label{e-nLambda}
\frac{\log (1/\rho)}{\log (2M+2)}\leq n\leq  \frac{2\log (1/\rho)}{\log (M/2)}\quad \mbox{ for each }n\in {\mathcal N}_{u,\rho}.
\end{equation}
Notice that for  $n\in {\mathcal N}_{u,\rho}$ and $0\leq j\leq n$, if the set
$$
\{\omega\in \Omega^n\cap \Lambda^{*,u}(\rho)\colon\; {\mathsf L}_1(\omega)=j\}
$$
is non-empty, then by Lemma~\ref{lem-5.6}(ii),
\begin{equation}
\label{e-rho4M}
 \frac{1}{4M+4}\cdot M^n 2^{\beta (j-n)}\leq \frac{1}{\rho}
\end{equation}
and hence
\begin{align*}
\#&\{\omega\in \Omega^n\cap \Lambda^{*,u}(\rho)\colon\; {\mathsf L}_1(\omega)=j\}\\
\mbox{}&\leq \binom{n}{j}\\
&\leq n^c \exp\left(nH\left(\frac{j}{n}\right)\right)\qquad\qquad\qquad\quad\qquad\qquad\mbox{(by \eqref{e-4.5})}\\\\
&\leq n^c \exp\left(D_an\left(\log M-\beta(1-j/n)\log 2\right)\right)\qquad\mbox{(by the definition of $D_a$)}\\
&=n^c \left(M^n 2^{\beta (j-n)}\right)^{D_a}\\
&\leq n^c\left(\frac{4M+4}{\rho} \right)^{D_a}\qquad\qquad\qquad\quad\quad \qquad\qquad \mbox{(by \eqref{e-rho4M})}.
\end{align*}
It follows that
\begin{align*}
\#\Lambda^{*,u}(\rho)&= \sum_{n\in {\mathcal N}_{u,\rho}}\sum_{j=0}^n\#\{\omega\in \Omega^n\cap \Lambda^{*,u}(\rho)\colon\; {\mathsf L}_1(\omega)=j\}\\
&\leq \sum_{n\in {\mathcal N}_{u,\rho}} (n+1)n^c\left(\frac{4M+4}{\rho} \right)^{D_a}\\
&\leq 2\cdot\left(\frac{2\log (1/\rho)}{\log (M/2)}\right)^{2+c}\cdot\left(\frac{4M+4}{\rho} \right)^{D_a}\qquad \mbox{(by \eqref{e-nLambda})},
\end{align*}
which proves \eqref{e-upperassouad1}.
\medskip

{\it Step 2: lower bound of $\adim {\bf E}^*$}. Recall that $D_a$ is defined as in \eqref{e-Da}. By Lemma~\ref{lem-5.4}, to prove that $\adim {\bf E}^*\geq D_a$, it suffices to show that for all $\epsilon>0$, there exist $u\in \Omega^*$ and $0<\rho<\epsilon$ such that
$$
\#\Lambda^{*,u}(\rho)\geq (1/\rho)^{D_a-\epsilon}.
$$

To this end, let $\epsilon>0$, and let $p_0\in [0,1]$ be a point at which the maximum on the right-hand side of \eqref{e-Da} is attained. Then we can take a sufficiently large $n\in \N$ such that
\begin{equation}
\label{e-Mnepsilon}
M^n 2^{\beta \lfloor p_0n\rfloor-\beta n+1}\geq \epsilon^{-1}
\end{equation}
and
\begin{equation}
\label{e-entropy1}
\frac{nH\left(\frac{\lfloor p_0n\rfloor}{n}\right)-c\log n } {\log\left( M^n 2^{\beta \lfloor p_0n\rfloor-\beta n+1}\right) }\geq D_a-\epsilon.
\end{equation}

Next we pick a large positive integer $j$ such that
$$
\frac{(1-\beta)}{\beta}N_{2j-1}>n,\quad \mbox{ and }\quad N_{2j}-N_{2j-1}>n.
$$
Set $u=1^{N_{2j-1}}\in \Omega^{N_{2j-1}}$. By Corollary~\ref{cor-5.6} and the definition of $(M_i)$ (see \eqref{e-Nk}),
$$
\frac{\ell^*(u\omega)}{\ell^*(u)}=2^{-\beta {\mathsf L}_1(\omega)+\lfloor\beta(n+k)\rfloor-\lfloor\beta k\rfloor}\cdot M^{-n}\quad \mbox{ for all } \omega\in \Omega^n.
$$
Set $\displaystyle\rho=2^{-\beta \lfloor p_0n\rfloor+\lfloor\beta(n+k)\rfloor-\lfloor\beta k\rfloor}\cdot M^{-n}.$ Then the above equality implies that
$$
\Lambda^{*,u}(\rho)\supset \{\omega\in \Omega^n\colon {\mathsf L}_1(\omega)=\lfloor p_0n\rfloor\}.
$$
Hence by \eqref{e-4.5},
$$
\#\Lambda^{*,u}(\rho)\geq \binom{n}{\lfloor p_0n\rfloor}\geq n^{-c}\exp\left(nH\left(\frac{\lfloor p_0n\rfloor}{n}\right)\right).
$$
Combining this with \eqref{e-entropy1} yields
$$
\#\Lambda^{*,u}(\rho)\geq \left( M^n 2^{\beta \lfloor p_0n\rfloor-\beta n+1}\right) ^{D_a-\epsilon}\geq \left( M^n 2^{\beta \lfloor p_0n\rfloor-\lfloor\beta(n+k)\rfloor+\lfloor\beta k\rfloor}\right) ^{D_a-\epsilon}= \left(\frac{1}{\rho}\right)^{D_a-\epsilon},
$$
where in the second inequality we have used the easily verified fact that $\lfloor\beta(n+k)\rfloor-\lfloor\beta k\rfloor\geq \beta n-1$.
Note that this fact, together with \eqref{e-Mnepsilon}, also implies that $$\rho\leq M^{-n}  2^{-\beta \lfloor p_0n\rfloor+\beta n-1}<\epsilon.$$
This completes the proof of the lower bound of $\adim {\bf E}^*$.

{\it Step 3: upper bound for $\ldim {\bf E}^*$}.  For our convenience, write
\begin{equation}
\label{e-Dl}
D_l \coloneqq \max_{p\in[0,1]}\frac{H(p)}{\log (M+1) + \beta p\log2}.
\end{equation}
To prove that $\ldim {\bf E}^*\leq D_l$, by Lemma~\ref{lem-5.4}, it suffices to show that for all $0<\rho<\frac{1}{2M}$, there exists $u\in \Omega^*$ such that
\begin{equation}
\label{e-upperassouad1*}
\#\Lambda^{*,u}(\rho)\leq 2\cdot \left(\frac{2\log(1/\rho)}{\log (M/2)}\right)^{2+c}  \cdot \left(\frac{2 M+2}{\rho}\right)^{D_a}.
\end{equation}

To this end, fix  $\rho\in (0,\frac{1}{2M})$. Take a large $j\in \N$ such that
\begin{equation}
\label{e-Njj}
\min\left\{\frac{(1-\beta)N_{2j}}{\beta},\; N_{2j+1}-N_{2j}\right\}\geq \frac{2\log (1/\rho)}{\log (M/2)}.
\end{equation}
Then set $u=0^{N_{2j}}\in \Omega^{N_{2j}}$. We show below that \eqref{e-upperassouad1*} holds.

Let ${\mathcal N}_{u,\rho}$ denote the set of integers $n$ such that $\Omega^n\cap \Lambda^{*,u}(\rho)\neq \varnothing$. By Lemma~\ref{lem-5.6}(i) and \eqref{e-Njj}, if $n\in {\mathcal N}_{u,\rho}$, then
\begin{equation}
\label{e-nLambda*}
 n\leq  \frac{2\log (1/\rho)}{\log (M/2)}\leq \min\left\{\frac{(1-\beta)N_{2j}}{\beta},\; N_{2j+1}-N_{2j}\right\}.
\end{equation}
Hence by Corollary~\ref{cor-5.6} and the definition of $(M_i)$, for every $\omega\in \Omega^n\cap \Lambda^{*,u}(\rho)$, we have
$$
2^{-\beta {\mathsf L}_1(\omega)}(M+1)^{-n}=\frac{\ell^*(u\omega)}{\ell^*(u)}\geq \frac{1}{2M+2}\cdot \frac{\ell^*(u\omega^{-})}{\ell^*(u)}> \frac{\rho}{2M+2},
$$
which is equivalent to
$$
\frac{1}{\rho}>\frac{2^{\beta {\mathsf L}_1(\omega)}(M+1)^{n}}{2M+2}.
$$
It follows that for  $n\in {\mathcal N}_{u,\rho}$ and $0\leq j\leq n$, if the set
$$
\{\omega\in \Omega^n\cap \Lambda^{*,u}(\rho)\colon\; {\mathsf L}_1(\omega)=j\}
$$
is non-empty, then
\begin{equation}
\label{e-rho4M*}
 \frac{(M+1)^n 2^{\beta j}}{2M+2}< \frac{1}{\rho}
\end{equation}
and hence
\begin{align*}
\#&\{\omega\in \Omega^n\cap \Lambda^{*,u}(\rho)\colon\; {\mathsf L}_1(\omega)=j\}\\
\mbox{}&\leq \binom{n}{j}\\
&\leq n^c \exp\left(nH\left(\frac{j}{n}\right)\right)\qquad\qquad\qquad\quad\qquad\qquad\mbox{(by \eqref{e-4.5})}\\\\
&\leq n^c \exp\left(D_{\tb l} n\left(\log (M+1)+\beta (j/n)\log 2\right)\right)\qquad\mbox{(by the definition of $D_l$)}\\
&=n^c \left((M+1)^n 2^{\beta j}\right)^{D_l}\\
&\leq n^c\left(\frac{2M+2}{\rho} \right)^{D_l}\qquad\qquad\qquad\quad\quad \qquad\qquad \mbox{(by \eqref{e-rho4M*})}.
\end{align*}
It follows that
\begin{align*}
\#\Lambda^{*,u}(\rho)&= \sum_{n\in {\mathcal N}_{u,\rho}}\sum_{j=0}^n\#\{\omega\in \Omega^n\cap \Lambda^{*,u}(\rho)\colon\; {\mathsf L}_1(\omega)=j\}\\
&\leq \sum_{n\in {\mathcal N}_{u,\rho}} (n+1)n^c\left(\frac{2M+2}{\rho} \right)^{D_a}\\
&\leq 2\cdot\left(\frac{2\log (1/\rho)}{\log (M/2)}\right)^{2+c}\cdot\left(\frac{2M+2}{\rho} \right)^{D_a}\qquad \mbox{(by \eqref{e-nLambda*})},
\end{align*}
which proves \eqref{e-upperassouad1*}.
\medskip

{\it Step 4: lower bound for $\ldim{\bf E}^*$}.  Recall that  $D_l$ is defined as in \eqref{e-Dl}. We will show that  $\ldim {\bf E}^*>D_l-\epsilon$ for all $\epsilon>0$. To this end, let $\epsilon>0$. Let $p_1$ be a point in $[0,1]$ at which the maximum on the right-hand side of \eqref{e-Dl} is attained, and set $p_0=1-p_1$. By continuity, there exists $\delta>0$ such that
\begin{equation}
\label{e-5.33}
\frac{H(p_1)-\delta}{\log (M+1) + \beta (p_1+\delta)\log2}\geq D_l-\epsilon.
\end{equation}

Next let $\eta$ be the Bernoulli product measure on $\Omega^\N$ generated by the probability vector $(p_0, p_1)$. That is, $\eta$ is the unique Borel probability measure on $\Omega^\N$ such that
$$
\eta([i_1\ldots i_n])=p_{i_1}\cdots p_{i_n}
$$
for every cylinder set $[i_1\ldots i_n]$. By the Shannon--McMillan--Breiman theorem and the Birkhoff ergodic theorem (or the law of large numbers), for $\eta$-a.e.~$x\in \Omega^\N$,
$$
\lim_{n\to \infty} \frac{\log \eta([x|n])}{n}=-H(p_1),\qquad \lim_{n\to \infty} \frac{{\mathsf L}_1(x|n)}{n}=p_1,
$$
where $x|n$ denotes the prefix $x_1\ldots x_n$ of $x=(x_i)_{i=1}^\infty$. Hence, by the Egoroff theorem, there exist a Borel set $X\subset \Omega^\N$ with $\eta(X)>1/2$ and $N\in \N$ such that for all $x\in X$ and $n\geq N$,
\begin{equation}
\label{e-5.34}
\eta([x|n])\leq \exp(-nH(p_1)+n\delta),\qquad {\mathsf L}_1(x|n)\leq (p_1+\delta)n.
\end{equation}

Now let $\rho\in (0, 1/(2M+2))$ be sufficiently small so that
$$
\frac{\log (1/\rho)}{\log (2M+2)}>N.
$$
Let $u\in \Omega^*$. By Lemma~\ref{lem-5.6}(i), for $n\in \N$, if $\Omega^n\cap \Lambda^{*,u}(\rho)\neq \varnothing$ (i.e., $n\in \mathcal N_{u,\rho}$), then
\begin{equation}
\frac{\log (1/\rho)}{\log (2M+2)}\leq n\leq \frac{2 \log (1/\rho)}{\log (M/2)}.
\end{equation}
Consequently, $n\geq N$ for all $n\in \mathcal N_{u,\rho}$, and $\displaystyle\#\mathcal N_{u,\rho}\leq \frac{2 \log (1/\rho)}{\log (M/2)}$.

Note that $\bigcup_{\omega\in \Lambda^{*,u}(\rho)}([\omega])=\Omega^\N$. It follows that
$$
\eta\left(\bigcup_{n\in \mathcal N_{u,\rho}}\bigcup_{\omega\in \Omega^n\cap\Lambda^{*,u}(\rho)}([\omega]\cap X) \right)=\eta\left(\bigcup_{\omega\in \Lambda^{*,u}(\rho)}([\omega]\cap X)\right)=\eta(X)>\frac{1}{2}.
$$
Hence there exists $\widetilde{n}\in \mathcal N_{u,\rho}$ such that
\begin{equation}
\label{e-5.36}
\eta\left(\bigcup_{\omega\in \Omega^{\widetilde{n}}\cap\Lambda^{*,u}(\rho)}([\omega]\cap X) \right)\geq \frac{1}{2\cdot\#\mathcal N_{u,\rho}}\geq \frac{\log (M/2)}{4 \log (1/\rho)}\geq \frac{\log (M/2)}{4 \log (2M+2)}\cdot \frac{1}{\widetilde{n}},
\end{equation}
where we have used the proven facts that $\displaystyle\#\mathcal N_{u,\rho}\leq \frac{2 \log (1/\rho)}{\log (M/2)}$ in the second inequality, and $\displaystyle\widetilde{n}\geq \frac{\log (1/\rho)}{\log (2M+2)}$ in the last inequality. Notice that $\widetilde{n}\geq N$. So by \eqref{e-5.34}, for every $\omega\in \Omega^{\widetilde{n}}$ with $[\omega]\cap X\neq \varnothing$,  we have
\begin{equation}
\label{e-5.37}
\eta([\omega])\leq \exp(-\widetilde{n}H(p_1)+\widetilde{n}\delta),\quad\mbox{ and }\quad
\mathsf L_1(\omega)\leq (p_1+\delta)\widetilde{n}.
\end{equation}
It follows that
\begin{align*}
\eta\left(\bigcup_{\omega\in \Omega^{\widetilde{n}}\cap\Lambda^{*,u}(\rho)}([\omega]\cap X) \right)&\leq \#(\Omega^{\widetilde{n}}\cap\Lambda^{*,u}(\rho))\cdot \exp(-\widetilde{n}H(p_1)+\widetilde{n}\delta)\\
&\leq (\# \Lambda^{*,u}(\rho))\cdot \exp(-\widetilde{n}H(p_1)+\widetilde{n}\delta).
\end{align*}
Combining this with \eqref{e-5.36} gives
\begin{equation}
\label{e-5.38}
\# \Lambda^{*,u}(\rho)\geq \frac{\log (M/2)}{4 \log (2M+2)}\cdot \frac{1}{\widetilde{n}}\cdot \exp(\widetilde{n}H(p_1)-\widetilde{n}\delta).
\end{equation}
Meanwhile, taking $\omega\in \Omega^{\widetilde{n}}$ with $[\omega]\cap X\neq \varnothing$, and applying Lemma~\ref{lem-5.6}(ii) and the second inequality in \eqref{e-5.37}, we obtain
$$
\frac{1}{\rho}\leq 2^{\beta {\mathsf L}_1(\omega)}(M+1)^{\widetilde{n}}\leq 2^{\beta (p_1+\delta)\widetilde{n}}(M+1)^{\widetilde{n}}.
$$
This, together with \eqref{e-5.38}, yields
$$
\frac{\log (\# \Lambda^{*,u}(\rho))}{\log (1/\rho)}\geq \frac{\log \left(\frac{\log (M/2)}{4 \log (2M+2)}\cdot \frac{1}{\widetilde{n}}\cdot \exp(\widetilde{n}H(p_1)-\widetilde{n}\delta)\right)}{\widetilde{n} (\log (M+1)+\beta (p_1+\delta)\log 2 )}.
$$
Note that $\displaystyle\widetilde{n}\geq \frac{\log (1/\rho)}{\log (2M+2)}$, so $\widetilde{n}\to \infty$ as $\rho\to 0$.  Hence, by the above inequality, Lemma~\ref{lem-5.4}, and \eqref{e-5.33}, we obtain
$$
\ldim{{\bf E}^*}=\liminf_{\rho\to 0}\inf_{u\in \Omega^*} \frac{\log(\#\Lambda^{*,u}(\rho))}{\log(1/\rho)}\geq D_l-\epsilon,
$$
as desired.
\end{proof}
\subsection{The proof of Theorem \ref{thm-5.1}}
We begin with the following lemma.
\begin{lem}
\label{lem-Last}
The following two inequalities  hold:
\begin{equation}
\label{e-in1}
\max_{p\in[0,1]}\frac{H(p)}{\log (M+1) + \beta p\log2}<\frac{\log 2}{\log (M+1)},
\end{equation}
and
\begin{equation}
\label{e-in2}
\max_{(p,q)\in[0,1]^2} \frac{\beta H(p) + (1-\beta) H(q)}{\log M + \beta(1-\beta)(q-p)\log 2}<\max_{p\in[0,1]}\frac{H(p)}{\log M -\beta (1-p)\log2}.
\end{equation}
\end{lem}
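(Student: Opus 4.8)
The plan is to prove the two inequalities separately. Inequality \eqref{e-in1} (which amounts to $\ldim\mathbf{E}^*<\hdim\mathbf{E}^*$) reduces to an elementary pointwise estimate, while \eqref{e-in2} (which amounts to $\ubdim\mathbf{E}^*<\adim\mathbf{E}^*$) requires a concavity argument combined with an identity relating the two denominators; the genuine difficulty there is extracting \emph{strict} inequality from a chain of non-strict ones.

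For \eqref{e-in1}, since all denominators are positive (as $M\geq 100$) and the left-hand maximum is attained on the compact set $[0,1]$, it suffices to establish the pointwise bound
\[
H(p)\log(M+1) < \log 2\cdot\bigl(\log(M+1)+\beta p\log 2\bigr)\qquad\text{for every } p\in[0,1].
\]
Rearranging, this is equivalent to $\bigl(\log 2-H(p)\bigr)\log(M+1)+\beta p(\log 2)^2 > 0$. Here the first summand is nonnegative and vanishes exactly when $p=\tfrac12$ (since $H(p)\le\log 2$ with equality only there), while the second is nonnegative and vanishes exactly when $p=0$. As these vanishing conditions are incompatible, the sum is strictly positive, which yields \eqref{e-in1}.

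For \eqref{e-in2}, write $F(p,q)=\beta H(p)+(1-\beta)H(q)$ and $G(p,q)=\log M+\beta(1-\beta)(q-p)\log 2$, and set $r=\beta p+(1-\beta)q\in[0,1]$. First I would apply Jensen's inequality, using the strict concavity of $H$, to get $F(p,q)\le H(r)$, with equality if and only if $p=q$. The key second step is the algebraic identity, to be checked by direct expansion,
\[
G(p,q)=\bigl(\log M-\beta(1-r)\log 2\bigr)+\beta(1-p)\log 2,
\]
so that $G(p,q)\ge \log M-\beta(1-r)\log 2>0$, with equality if and only if $p=1$. Combining these (all quantities being positive) gives, for every $(p,q)\in[0,1]^2$,
\[
\frac{F(p,q)}{G(p,q)}\le\frac{H(r)}{\log M-\beta(1-r)\log 2}\le\max_{s\in[0,1]}\frac{H(s)}{\log M-\beta(1-s)\log 2},
\]
and taking the maximum over $(p,q)$ yields the non-strict form of \eqref{e-in2}.

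The main obstacle will be upgrading this to a strict inequality, and the plan is to argue by contradiction. If equality held, then at a maximizing pair $(p^*,q^*)$ (which exists by compactness) both estimates above would be equalities simultaneously. The equality case of Jensen forces $p^*=q^*$, while equality in the second estimate forces either $p^*=1$ or $H(r^*)=0$; in every case one finds $p^*=q^*\in\{0,1\}$, whence $F(p^*,q^*)=0$ and the common maximal value would be $0$. This contradicts $D(\log M)>0$, which holds by Lemma~\ref{l:DF1} (indeed $D(\log M)>\tfrac{\log 2}{\log M}$). Hence \eqref{e-in2} is strict, completing the proof.
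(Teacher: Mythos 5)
Your proof is correct and follows essentially the same route as the paper: for \eqref{e-in1} the observation that $H(p)\le\log 2$ with equality only at $p=\tfrac12$ while the term $\beta p\log 2$ vanishes only at $p=0$, and for \eqref{e-in2} Jensen's inequality for the strictly concave $H$ together with the identity $\log M+\beta(1-\beta)(q-p)\log 2=\bigl(\log M-\beta(1-r)\log 2\bigr)+\beta(1-p)\log 2$ (the paper's \eqref{e-align}), with strictness obtained by the same contradiction that equality would force the maximum to be $0$.
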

\begin{proof} Inequality \eqref{e-in1}  is straightforward to verify  since the maximum on its left-hand side is attained at some $p>0$, and $H(p)\leq \log 2$.

To see \eqref{e-in2}, let $(p_0,q_0)$ be a point in $[0,1]^2$ at which the maximum on the left-hand side of \eqref{e-in2} is attained.  Since the entropy  function $H$ is concave on $[0,1]$,  by Jensen's inequality,
\begin{equation}
\label{e-entropyH}
\beta H(p_0) + (1-\beta) H(q_0)\leq H(\beta p_0 + (1-\beta) q_0).
\end{equation}
Set $t=\beta p_0 + (1-\beta) q_0$. Clearly, $t\in [0,1]$ and
\begin{align}
\label{e-align}
(1-\beta)(q_0-p_0)+(1-t)=1-p_0\geq 0,
\end{align}
which implies  $\log M + \beta(1-\beta)(q_0-p_0)\log 2\geq \log M-\beta(1-t)\log 2$. Hence
\begin{align}
\label{e-Last}
\frac{\beta H(p_0) + (1-\beta) H(q_0)}{\log M + \beta(1-\beta)(q_0-p_0)\log 2}\leq \frac{ H(t)}{\log M - \beta(1-t)\log 2}.
\end{align}
We claim that the inequality in  \eqref{e-Last} is strict. Otherwise, the inequalities in  \eqref{e-entropyH} and \eqref{e-align} would  all be equalities, which implies that $p_0=q_0$ (by the strict concavity of $H$) and $p_0=1$, leading to a contradiction since the left-hand side of \eqref{e-in2} would then be equal to zero. Hence, the claim is true, and \eqref{e-in2} follows.
\end{proof}
\begin{proof}[Proof of Theorem \ref{thm-5.1}] By Proposition~\ref{prop-Attractor}, ${\bf E}^*$ is the attractor of a bi-Lipschitz IFS.
As shown in Propositions \ref{prop-Hausdorff}, \ref{prop-box} and \ref{prop-lowerassouad}, ${\bf E}^*$ satisfies the desired dimensional formulas.

It remains to show that all these dimensions are distinct.  Since the function $D(\cdot)$ is strictly decreasing, it follows that $D(M+1)<D(M)$, and thus $\lbdim {\bf E}^*<\ubdim {\bf E}^*$. By Lemma~\ref{l:DF1}, $\frac{\log 2}{\log (M+1)}<D(M+1)$, and consequently, $\hdim {\bf E}^*<\lbdim {\bf E}^*$. Finally, the inequalities $\ldim {\bf E}^*<\hdim {\bf E}^*$ and $\ubdim {\bf E}^*<\adim {\bf E}^*$ follow directly from Lemma~\ref{lem-Last}.
\end{proof}
\section{A simpler construction}\label{Sec:simpler}

In this section, we present a simpler construction of a bi-Lipschitz IFS whose attractor has distinct lower and upper box dimensions. However in this case, the Hausdorff dimension cannot be separated from the lower box dimension. Let us first introduce the following.
\begin{defn}
Let $\mathbf{c}=(c_n)_{n=1}^{\infty}$ be a sequence of positive numbers such that  $c_n<1/2$ for all $n$. The {\bf symmetric Cantor set} $E_{\mathbf{c}}$ is the Cantor set  whose generating intervals satisfy
\[
|I_{\omega 0}| = |I_{\omega 1}| = c_n |I_{\omega}| \quad \mbox{ for every $n\geq 1$ and  } \omega\in \Omega^{n-1}.
\]
\end{defn}

In what follows, we let $E_{\mathbf{c}}$ be the symmetric Cantor set associated with a sequence $\mathbf{c}=(c_n)_{n=1}^{\infty}$ of real numbers satisfying $0<c_n<1/2$. By the Cantor set construction, the gap intervals of $E_{\bf c}$ satisfy
\[
|G_{\omega} |= (1-2c_n)|I_{\omega}|
\]
 for each $\omega\in \Omega^{n-1}$. Moreover, it is standard to see that $E_{\bf c}$ admits the following additive representation:
\begin{equation}\label{eq:cantor}
	E_{\mathbf{c}}= \biggl\{ \sum_{n=1}^{\infty}\omega_n u_n \colon \omega_n\in\{ 0,1 \} \biggr\},
\end{equation}
where $u_1=1-c_1$ and $u_n=\left(\prod_{j=1}^{n-1} c_j\right )(1-c_n)$ for $n\geq 2$.

It is known (see~\cite{FeWeW97}) that if $\inf c_n >0$, then
\begin{equation}\label{eq:dim}
	\begin{aligned}
		\lbdim E_{\mathbf{c}} &=\hdim E_{\mathbf{c}}= \liminf_{n\to\infty} \frac{n\log 2}{-\log (c_1c_2\dotsm c_n)}, \\
		\ubdim E_{\mathbf{c}} &=\pdim E_{\mathbf{c}}= \limsup_{n\to\infty} \frac{n\log 2}{-\log (c_1c_2\dotsm c_n)},
	\end{aligned}
\end{equation}
where $\pdim$ denotes the packing dimension; see \cite{Falconer2014} for its definition.

Now we can apply Theorem~\ref{thm:bilip} to obtain a sufficient condition for  $E_{\mathbf{c}}$ to be the attractor of  a bi-Lipschitz IFS.
\begin{prop}\label{prop:bilip1}
	Let  $\mathbf{c}=(c_n)_{n\ge1}$ be a sequence such that $0<c_n<1/2$ for all $n\in \N$ and let $E_{\mathbf{c}}$  be the corresponding symmetric Cantor set,   with $\phi_0$, $\phi_1$ being defined in~\eqref{eq:IFSmap}, respectively. Then $\phi_0$ and~$\phi_1$ are bi-Lipschitz contractions on $[0,1]$ if and only if
\begin{equation}\label{n:biLipi1}
	0<\inf_{n\ge1}\frac{c_n(1-2c_{n+1})}{1-2c_n},\qquad \sup_{n\ge1}\frac{c_n(1-2c_{n+1})}{1-2c_n}<1.
\end{equation}	
\end{prop}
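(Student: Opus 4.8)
The plan is to apply Theorem~\ref{thm:bilip} directly and reduce everything to computing the two families of ratios $|I_{i\omega}|/|I_\omega|$ and $|G_{i\omega}|/|G_\omega|$ from \eqref{e-thm2.1}, after which I can simply read off when $0<\theta_*$ and $\theta^*<1$. The structural fact I will exploit is the homogeneity of a symmetric Cantor set: because $|I_{\omega0}|=|I_{\omega1}|=c_n|I_\omega|$ for $\omega\in\Omega^{n-1}$, the length $|I_\omega|$ depends only on $|\omega|$, namely $|I_\omega|=\prod_{k=1}^n c_k$ for $\omega\in\Omega^n$, and likewise $|G_\omega|=(1-2c_n)|I_\omega|$ for $\omega\in\Omega^{n-1}$. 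In particular, these ratios depend neither on the digit $i\in\{0,1\}$ nor on the specific word of a given length, so the bi-Lipschitz condition is identical for $\phi_0$ and $\phi_1$ and reduces to a single pair of inequalities.

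First I would record the two computations. For $\omega\in\Omega^n$, prepending a digit raises the length by one, so
\[
\frac{|I_{i\omega}|}{|I_\omega|}=\frac{\prod_{k=1}^{n+1}c_k}{\prod_{k=1}^{n}c_k}=c_{n+1},
\]
while for $\omega\in\Omega^{n-1}$ (so that $i\omega\in\Omega^n$),
\[
\frac{|G_{i\omega}|}{|G_\omega|}=\frac{(1-2c_{n+1})\prod_{k=1}^{n}c_k}{(1-2c_n)\prod_{k=1}^{n-1}c_k}=\frac{c_n(1-2c_{n+1})}{1-2c_n}\eqqcolon r_n.
\]
Consequently $\theta_*=\min\{\inf_{n\ge1}c_n,\ \inf_{n\ge1}r_n\}$ and $\theta^*=\max\{\sup_{n\ge1}c_n,\ \sup_{n\ge1}r_n\}$. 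The remaining task is then to show that ``$\theta_*>0$ and $\theta^*<1$'' is equivalent to the two inequalities in \eqref{n:biLipi1}, i.e.\ to $\inf_n r_n>0$ and $\sup_n r_n<1$, which amounts to checking that the constraints coming from the generating intervals are automatically subsumed by those coming from the gaps.

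For the upper bound this is immediate: since $0<c_n<1/2$ we have $\sup_n c_n\le 1/2<1$, so $\theta^*<1$ holds if and only if $\sup_n r_n<1$. The only step with genuine content is the lower bound, where I must verify that $\inf_n r_n>0$ already forces $\inf_n c_n>0$; this is the point I expect to be the main (though mild) obstacle. I would argue it quantitatively: since $1-2c_{n+1}<1$ we have $r_n\le c_n/(1-2c_n)$, so if $r_n\ge\delta$ for all $n$ then $c_n\ge\delta(1-2c_n)$, whence $c_n\ge\delta/(1+2\delta)>0$ and thus $\inf_n c_n>0$ and $\theta_*>0$. The converse directions being trivial, combining these observations with Theorem~\ref{thm:bilip} yields both implications of the stated equivalence. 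Everything beyond the homogeneity computation and this last inequality is routine bookkeeping.
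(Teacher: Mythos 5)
Your proposal is correct and follows essentially the same route as the paper: both apply Theorem~\ref{thm:bilip}, compute $|I_{i\omega}|/|I_\omega|=c_{n+1}$ and $|G_{i\omega}|/|G_\omega|=c_n(1-2c_{n+1})/(1-2c_n)$, and observe that the interval-ratio constraints are subsumed by the gap-ratio constraints. Your quantitative step ($\inf_n r_n\ge\delta$ forces $c_n\ge\delta/(1+2\delta)$) is just the contrapositive of the paper's remark that $\inf c_n=0$ implies $\inf_n r_n=0$.
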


\begin{proof}
By a direct calculation, for each $\omega \in \Omega^n$ and $i\in \{0,1\}$, we have
\[
\frac{|G_{i\omega}|}{|G_{\omega}|} = \frac{c_n(1-2c_{n+1})}{1-2c_n} \quad \mbox{and} \quad \frac{|I_{i\omega}|}{|I_{\omega}|} = c_{n+1}.
\]
Notice that the condition $\inf c_n = 0$ implies that $  \displaystyle \inf \frac{c_n(1-2c_{n+1})}{1-2c_n} = 0$. Therefore, this proposition follows immediately from Theorem~\ref{thm:bilip}.
\end{proof}

We now construct a symmetric Cantor set, which is the attractor of a bi-Lipschitz IFS but its box dimension does not exist. 

\begin{prop}\label{p:symboxnotexist}
There exists a symmetric Cantor set generated by a bi-Lipschitz IFS and its box dimension does not exist.
\end{prop}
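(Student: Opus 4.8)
The plan is to exhibit an explicit sequence $\mathbf{c}=(c_n)_{n\ge1}$ taking only two distinct values, arranged in rapidly growing blocks, and then to invoke Proposition~\ref{prop:bilip1} to verify the bi-Lipschitz property and the formulas in~\eqref{eq:dim} to compute the two box dimensions.

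First I would fix two distinct numbers $c,c'\in(0,1/3)$ with $c<c'$. Restricting to the range $(0,1/3)$ is the point that makes the bi-Lipschitz verification painless: since $c_{n+1}>0$ we have $3-2c_{n+1}<3$, so $c_n(3-2c_{n+1})<3c_n<1$, which is precisely the inequality $c_n(1-2c_{n+1})<1-2c_n$, i.e.\ $\frac{c_n(1-2c_{n+1})}{1-2c_n}<1$. Because $\mathbf{c}$ takes only the values $c$ and $c'$, the quantity $\frac{c_n(1-2c_{n+1})}{1-2c_n}$ ranges over a finite set of numbers, each lying in $(0,1)$; hence $\inf_n$ of it is positive and $\sup_n$ of it is strictly below $1$. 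By Proposition~\ref{prop:bilip1}, $E_{\mathbf{c}}$ is the attractor of a bi-Lipschitz IFS, no matter how the values $c,c'$ are distributed among the $c_n$.

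Next I would distribute the two values so as to force the box dimension to oscillate. Choose integers $L_1<L_2<\cdots$ growing fast enough that $N_{j-1}/N_j\to 0$ (for instance $L_j=2^{j^2}$), where $N_j=L_1+\dots+L_j$, and set $c_n=c$ when $n$ lies in an odd block and $c_n=c'$ when $n$ lies in an even block; that is, $c_n\in\{c,c'\}$ according to the parity of the unique $j$ with $N_{j-1}<n\le N_j$. Writing $A_n=\frac1n\sum_{k=1}^n\log(1/c_k)$, each term $\log(1/c_k)$ lies in $\{\log(1/c),\log(1/c')\}$, so $A_n\in[\log(1/c'),\log(1/c)]$ for all $n$ (recall $c<c'$). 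Moreover, at the end of the $j$-th block the last block dominates: since $L_j/N_j=1-N_{j-1}/N_j\to1$, we obtain $A_{N_j}\to\log(1/c)$ along odd $j$ and $A_{N_j}\to\log(1/c')$ along even $j$. Hence $\liminf_n A_n=\log(1/c')$ and $\limsup_n A_n=\log(1/c)$.

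Finally, since $\inf_n c_n=c>0$, the formulas~\eqref{eq:dim} apply, and because $-\log(c_1\cdots c_n)=nA_n$ we conclude
\[
\lbdim E_{\mathbf{c}}=\liminf_{n\to\infty}\frac{\log 2}{A_n}=\frac{\log 2}{\limsup_n A_n}=\frac{\log 2}{\log(1/c)},\qquad
\ubdim E_{\mathbf{c}}=\frac{\log 2}{\liminf_n A_n}=\frac{\log 2}{\log(1/c')}.
\]
As $c\ne c'$ these two values are distinct, so the box dimension of $E_{\mathbf{c}}$ does not exist. This construction presents no genuine obstacle; the only points demanding a little care are the strict inequality $\sup_n\frac{c_n(1-2c_{n+1})}{1-2c_n}<1$ (secured by the restriction to $(0,1/3)$) and the elementary fact that the Cesàro averages $A_n$ of a bounded sequence arranged in fast-growing blocks attain both extreme values in the limit.
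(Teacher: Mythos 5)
Your proposal is correct and follows essentially the same route as the paper: pick a two-valued sequence $\mathbf{c}$ for which the Ces\`aro averages of $\log c_n$ fail to converge, check condition~\eqref{n:biLipi1} via Proposition~\ref{prop:bilip1}, and read off the distinct lower and upper box dimensions from~\eqref{eq:dim}. The only difference is that you make explicit (via fast-growing blocks) what the paper leaves as "choose a sequence with $c_n\in\{1/3,1/4\}$ such that the limit does not exist," and you verify the bi-Lipschitz condition for general values in $(0,1/3)$ rather than for the specific pair $\{1/3,1/4\}$.
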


\begin{proof}
	Choose a sequence $\mathbf{c}=(c_n)_{n\ge1}$ with $c_n\in\{1/3,1/4\}$ such that
	\[ \lim_{n\to\infty}\frac{\log c_1 + \dots + \log c_n}{n} \quad\text{does not exist}. \]
	Let $E_{\bf c}$ be the symmetric Cantor set associated with ${\bf c}$. By~\eqref{eq:dim}, the box dimension of $E_{\mathbf{c}}$ does not exist. On the other hand, the sequence~$\mathbf{c}$ satisfies the condition~\eqref{n:biLipi1}. Let $\phi_0,\phi_1$ be  the maps given by~\eqref{eq:IFSmap}, in which $E$ is replaced by $E_{\bf c}$. Then~\eqref{e-Cantor} holds with $E$ replaced by $E_{\bf c}$. So by Proposition~\ref{prop:bilip1}, $\{\phi_0, \phi_1\}$ is a bi-Lipschitz IFS on $[0,1]$, having $E_{{\bf c}}$ as its attractor.
\end{proof}

\begin{rem}
\label{rem-Cantor}From the proof of Proposition~\ref{p:symboxnotexist} and the dimensional result \eqref{eq:dim} on symmetric Cantor sets, it follows easily  that for any given values $s,t\in (0,1]$ with $s\leq t$, there exists a symmetric Cantor set $E_{\bf c}$ such that $E_{\bf c}$ is the attractor of a bi-Lipschitz IFS, and moreover, $\lbdim E_{\bf c}=s$ and $\ubdim E_{\bf c}=t$.
\end{rem}

\subsection{Differentiability of the maps} The differentiability of the maps~$\phi_0$ and~$\phi_1$ associated with a given symmetric Cantor set $E_{\bf c}$ can be studied relatively easily. Notice that the two maps are of course differentiable on~$[0,1]\setminus E_{\mathbf{c}}$. However, we will show that they are non-differentiable at every point in $E_{\mathbf{c}}$ if the box dimension of $E_{\mathbf{c}}$ does not exist.

\begin{prop}\label{prop-no-diff}
	Let $E_{\mathbf{c}}$ be a symmetric Cantor set given by~\eqref{eq:cantor} and $\phi_0,\phi_1$ the maps given by~\eqref{eq:IFSmap}, in which $E$ is replaced by $E_{\bf c}$. If there exist $i\in\{0,1\}$ and $x\in E_{\mathbf{c}}$ such that $\phi_i$ is differentiable at~$x$, then the box dimension of $E_{\mathbf{c}}$ exists.
\end{prop}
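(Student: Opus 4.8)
The plan is to show that differentiability of $\phi_i$ at a single point $x\in E_{\mathbf{c}}$ already forces the whole sequence $(c_n)$ to converge, and then to read off the existence of the box dimension from the formula~\eqref{eq:dim}. The key point is that, however complicated $\phi_i$ may be, it acts on generating intervals in a completely rigid way. For a symmetric Cantor set one has $|I_\omega|=\prod_{k=1}^n c_k$ for every $\omega\in\Omega^n$, and the construction~\eqref{eq:IFSmap} gives $\phi_i(I_{\omega})=I_{i\omega}$, so that
\[
\frac{|I_{i\omega}|}{|I_{\omega}|}=c_{n+1}\qquad\text{for every }\omega\in\Omega^n .
\]
This ratio depends only on the level $n$ and not on the word $\omega$, which is exactly the homogeneity I intend to exploit.

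First I would fix the coding $\omega=(\omega_k)_{k\ge 1}=\pi^{-1}(x)$ and let $a_n<b_n$ be the two endpoints of $I_n(x)=I_{\omega_1\ldots\omega_n}$ (see~\eqref{e-interval}). Both lie in $E_{\mathbf{c}}$, they satisfy $a_n\le x\le b_n$ with $b_n-a_n=\prod_{k=1}^n c_k\to 0$, and $\phi_i$ maps them to the two endpoints of $I_{i\omega_1\ldots\omega_n}$, so that $\phi_i(b_n)-\phi_i(a_n)=c_{n+1}(b_n-a_n)$. The heart of the argument is to feed these two points into the definition of the derivative. Writing $L=\phi_i'(x)$ and applying the first-order estimate at $x$ separately to $a_n$ and to $b_n$ (legitimate since both tend to $x$), the two error terms are bounded by $\epsilon\,|a_n-x|$ and $\epsilon\,|b_n-x|$, whose sum equals $\epsilon(b_n-a_n)$ because $a_n\le x\le b_n$. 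Subtracting the two expansions and dividing by $b_n-a_n$ yields
\[
\Bigl|\,c_{n+1}-L\,\Bigr|=\left|\frac{\phi_i(b_n)-\phi_i(a_n)}{b_n-a_n}-L\right|\le \epsilon
\]
for all large $n$, and hence $c_n\to L$. Only $b_n-a_n>0$ is used, so degenerate situations (for instance $x$ being an endpoint) are covered automatically.

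Once $c_n\to L$, the Ces\`aro averages $\tfrac1n\sum_{k=1}^n\log c_k$ converge to $\log L$, so $-\tfrac1n\log(c_1\cdots c_n)$ converges; consequently the $\liminf$ and $\limsup$ in~\eqref{eq:dim} coincide and the box dimension exists. The one point requiring care, which I expect to be the main obstacle, is the case $L=0$: then $\inf_n c_n=0$ and the cited formula~\eqref{eq:dim} is not directly applicable. Here I would argue by hand instead: covering $E_{\mathbf{c}}$ at level $n$ by the $2^n$ intervals of length $\prod_{k=1}^n c_k$ gives $N_\delta(E_{\mathbf{c}})\le 2^n$ whenever $\prod_{k=1}^n c_k\le\delta<\prod_{k=1}^{n-1}c_k$, and since $c_n\to 0$ forces $-\tfrac1n\log(c_1\cdots c_n)\to\infty$, one obtains $\ubdim E_{\mathbf{c}}=0$, so the box dimension exists and equals $0$. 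When $L>0$ we instead have $\inf_n c_n>0$, so~\eqref{eq:dim} applies verbatim and the box dimension equals $\tfrac{\log 2}{-\log L}$.
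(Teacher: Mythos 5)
Your proof is correct, and it runs on the same engine as the paper's: evaluate the difference quotient of $\phi_i$ along pairs of points of $E_{\mathbf{c}}$ converging to $x$, use the rigid action of $\phi_i$ on the generating intervals to compute that quotient explicitly in terms of the $c_n$, and then conclude via Ces\`aro averaging and~\eqref{eq:dim}. The difference is in the choice of test points. The paper pairs $x$ with the point $y_n$ obtained by flipping the $n$-th digit of its coding, which avoids any straddle lemma (one of the two points is $x$ itself) but produces the messier ratio $c_n(1-c_{n+1})/(1-c_n)$, requiring an extra telescoping step to extract $\lim_n \frac1n\sum\log c_j$. You pair the two endpoints $a_n<b_n$ of $I_n(x)$, which costs you the (correctly executed) straddle estimate $|e_n^+ - e_n^-|\le \epsilon(b_n-a_n)$ but yields the clean conclusion $c_{n+1}\to\phi_i'(x)$ directly. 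Your version also buys one genuine improvement: you notice that~\eqref{eq:dim} is stated only under $\inf_n c_n>0$, and you handle the case $L=\phi_i'(x)=0$ (where $c_n\to 0$) by a direct covering argument showing $\ubdim E_{\mathbf{c}}=0$. The paper's proof silently invokes~\eqref{eq:dim} at this point even though, if $\lim_n\frac1n\sum\log c_j=-\infty$, the hypothesis of that formula fails; your explicit case split closes that small gap.
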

\begin{proof}
Suppose that the derivative $\phi_i'(x)$ exists for some $i\in \{0,1\}$ and $x\in E_{\bf c}$.
	Since $x\in E_{\mathbf{c}}$, there exists $(\omega_n)_{n=1}^\infty\in \{0,1\}^\N$ such that
	\[ x = \omega_1(1-c_1)+\omega_2c_1(1-c_2)+\dots+\omega_nc_1c_2\dotsm c_{n-1}(1-c_n)+\cdots. \]
	For $n\in \N$, let $y_n\in E_{\mathbf{c}}$ be given as
	\[ y_n =\omega_1(1-c_1)+\omega_2c_1(1-c_2)+\dots+(1-\omega_n)c_1c_2\dotsm c_{n-1}(1-c_n)+\cdots. \]
	That is, $y_n$ is obtained from $x$ by replacing $\omega_n$ in the $n$th term in the summation defining $x$ with $(1-\omega_n)$.
	By the definition of~$\phi_i$ (see~\eqref{eq:IFSmap}), we have
	\[ \phi_i(x) = \pi (i0^{\infty})+ \omega_1c_1(1-c_2)+\dots+\omega_nc_1c_2\dots c_n(1-c_{n+1}) + \dotsb \]
	and
	\[ \phi_i(y_n) = \pi (i0^{\infty})+ \omega_1c_1(1-c_2)+\dots+(1-\omega_n)c_1c_2\dots c_n(1-c_{n+1}) + \dotsb. \]
	Consequently,
	\[ \frac{\phi_i(x)-\phi_i(y_n)}{x-y_n} = \frac{(2\omega_n-1)c_1\dotsm c_n(1-c_{n+1})}{(2\omega_n-1)c_1\dotsm c_{n-1}(1-c_n)} = \frac{c_n(1-c_{n+1})}{1-c_n}. \]
	Since $y_n$ converges to $x$, the existence of $\phi_i'(x)$  implies the existence of
	\[ \lim_{n\to\infty}\frac{c_n (1-c_{n+1})}{1-c_n}.\] Taking logarithms and averaging, it follows that the limit
	\[ \lim_{n\to\infty} \frac{1}{n}\sum_{j=1}^n\log \frac{c_j(1-c_{j+1})}{1-c_j} \]
	exists. But
	\[ \frac1{n}\sum_{j=1}^n\log \frac{c_j(1-c_{j+1})}{1-c_j}  = \frac1{n} \sum_{j=1}^n\log c_j + \frac{1}{n}\log \frac{1-c_{n+1}}{1-c_1}. \]
	Since $c_n\in (0,1/2)$ for all $n\ge1$, $\displaystyle\frac{1}{n}\log \frac{1-c_{n+1}}{1-c_1}\to 0$ as $n\to\infty$. It follows that the limit
	\[ \lim_{n\to\infty}\frac1{n}\sum_{j=1}^n\log c_j   \]
	exists. By~\eqref{eq:dim}, the box dimension of $E_{\mathbf{c}}$ exists.
\end{proof}

\subsection{Further results}\label{subsec:exactproof}
The following result is an application of the Shannon--McMillan--Breiman theorem. For the statement of  the theorem and the concept of measure-theoretic entropy of an invariant measure, the reader is referred to \cite{Walters}.
\begin{thm}\label{thm:symmetricnotexact}
    Consider the symmetric Cantor set $E_{\bf c}$ constructed in the proof of Proposition~\ref{p:symboxnotexist}. Let $\{\phi_0,\phi_1\}$ be the generating bi-Lipschitz IFS of $E_{\bf c}$, and let $s, t$ be the lower and upper  box dimensions of $E_{\bf c}$, respectively. Note that $0 < s < t \leq 1$. Let $\nu$ be an ergodic invariant measure for the left shift on $\Omega^{\N}$ with entropy $h>0$, and let $\mu = \pi_* \nu$ where $\pi$ is the coding map from~\eqref{e-pi}. Then at $\mu$-a.e.~$x \in \supp(\mu)$,
    \[
    \underline{\dim}(\mu,x) = \frac{hs}{\log 2}  <  \frac{ht}{\log 2} = \overline{\dim}(\mu,x).
    \]
\end{thm}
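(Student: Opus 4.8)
The plan is to reduce the computation of the local dimensions of $\mu$ to the behaviour of $\nu$ on cylinders, via a two-sided geometric comparison between balls and generating intervals, and then to combine the Shannon--McMillan--Breiman theorem with the box-dimension formula \eqref{eq:dim}.

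First I would record two structural facts about $E_{\bf c}$. By the symmetry of the construction, every generating interval $I_\omega$ with $\omega\in\Omega^n$ has the common length $L_n := c_1c_2\cdots c_n$, and since $\mu=\pi_*\nu$ and $\pi$ (from \eqref{e-pi}) is a bijection respecting the cylinder structure, $\mu(I_\omega)=\nu([\omega])$ for every $\omega$. For $x\in E_{\bf c}$ with coding $(\omega_k)_{k\ge1}=\pi^{-1}(x)$ I write $I_n(x)=I_{\omega_1\ldots\omega_n}$ as in \eqref{e-interval}.

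The heart of the argument is a sandwich for $\mu(B(x,\delta))$. Given small $\delta>0$, choose $n=n(\delta)$ with $L_n\le\delta<L_{n-1}$. Since $x\in I_n(x)$ and $|I_n(x)|=L_n\le\delta$, we have $I_n(x)\subset B(x,\delta)$, so $\mu(B(x,\delta))\ge\nu([\omega_1\ldots\omega_n])$. For the matching upper bound I would show $B(x,\delta)\cap E_{\bf c}\subset I_{n-1}(x)$: the gap of $E_{\bf c}$ immediately adjacent to $I_{n-1}(x)$ on either side is a level-$k$ gap for some $k\le n-1$, of length $(1-2c_k)L_{k-1}\ge\tfrac13 L_{n-2}=\tfrac13 L_{n-1}/c_{n-1}\ge L_{n-1}>\delta$, where I used $c_j\in\{1/3,1/4\}$ (so $1-2c_k\ge\tfrac13$ and $c_{n-1}\le\tfrac13$). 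Hence the radius-$\delta$ ball cannot reach any point of $E_{\bf c}$ lying outside $I_{n-1}(x)$, giving $\mu(B(x,\delta))\le\nu([\omega_1\ldots\omega_{n-1}])$. This geometric estimate, which is exactly where the hypotheses $0<c_n\le\tfrac13<\tfrac12$ enter, is the main (though essentially routine) obstacle; everything afterwards is bookkeeping.

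Finally I would pass to the limit. By the Shannon--McMillan--Breiman theorem, for $\nu$-a.e.\ $x$ we have $A_n:=-\log\nu([\omega_1\ldots\omega_n])=hn(1+o(1))$, so $A_{n-1}/A_n\to1$; and $B_n:=-\log L_n=-\sum_{j=1}^n\log c_j$ satisfies $B_{n-1}/B_n\to1$ since $-\log c_n$ is bounded. Taking logarithms in the sandwich and dividing by $\log\delta$ (noting $B_{n-1}<-\log\delta\le B_n$) squeezes $\dfrac{\log\mu(B(x,\delta))}{\log\delta}$ between $A_{n-1}/B_n$ and $A_n/B_{n-1}$, both asymptotic to $A_n/B_n$; hence the lower and upper local dimensions equal $\liminf_n A_n/B_n$ and $\limsup_n A_n/B_n$. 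Writing $A_n/B_n=(A_n/n)\cdot(n/B_n)$ with $A_n/n\to h$ and $n/B_n=\tfrac1{\log2}\cdot\frac{n\log2}{-\sum_{j=1}^n\log c_j}$, the formula \eqref{eq:dim} yields $\liminf_n n/B_n=s/\log2$ and $\limsup_n n/B_n=t/\log2$. Since $A_n/n$ converges to the positive constant $h$, I conclude $\underline{\dim}(\mu,x)=hs/\log2$ and $\overline{\dim}(\mu,x)=ht/\log2$ for $\mu$-a.e.\ $x$, and these differ because $h>0$ and $s<t$.
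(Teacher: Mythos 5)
Your proposal is correct and follows essentially the same route as the paper: both rest on the comparison $E_{\bf c}\cap I_{n}(x)\subseteq E_{\bf c}\cap B(x,\delta)\subseteq E_{\bf c}\cap I_{n-1}(x)$ (justified by the gap lengths via $c_j\le 1/3$), the Shannon--McMillan--Breiman theorem for $\nu([\omega|n])$, and the box-dimension formula \eqref{eq:dim}. The only difference is presentational: you identify the local dimensions exactly as $\liminf_n A_n/B_n$ and $\limsup_n A_n/B_n$ and then factor out $A_n/n\to h$, whereas the paper runs the same estimates with explicit $\varepsilon$'s and extracts the complementary inequalities along subsequences $(n_k)$, $(m_k)$.
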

\begin{proof}
    Fix $\varepsilon > 0$. The Shannon--McMillan--Breiman theorem tells us that for $\mu$-a.e.~$x$, there exists $N_x \in \N$ such that for all $n \geq N_x$,
    \begin{equation}\label{eq:smbused}
    \mu(I_n(x)) \in \left[e^{-(h+\varepsilon)n},e^{-(h-\varepsilon)n}\right].
    \end{equation}

    By~\eqref{eq:dim}, for all $n$ sufficiently large,
    \[
    s-\varepsilon \leq \frac{n\log 2}{-\log (c_1 c_2\dotsm c_n)} \leq t + \varepsilon.
    \]
    This means that each of the $2^n$ intervals $I_{\omega}$ with $\omega \in \Omega^n$ has length
    \[
    c_1 c_2\dotsm c_n \in \left[2^{-n/(s-\varepsilon)}, 2^{-n/(t+\varepsilon)}\right].
    \]
    Since each $c_i$ is at most $1/3$, if $c_1 c_2\dotsm c_{n+1} < \delta \leq c_1 c_2\dotsm c_n$ then $$E_{\bf c} \cap I_{n+1}(x) \subseteq E_{\bf c}\cap B(x,\delta) \subseteq E_{\bf c}\cap I_n(x).$$
     Therefore by~\eqref{eq:smbused}, for $\mu$-a.e.~$x$,
    \[
    \frac{\log \mu(B(x,\delta))}{\log \delta} \leq \frac{(h+\varepsilon)(n+1)}{(n \log 2 )/ (t+\varepsilon)} = \frac{(h+\varepsilon)(t+\varepsilon)}{\log 2} + O\left(\frac1n\right).
    \]
    Similarly, for $\mu$-a.e.~$x$,
    \[
    \frac{\log \mu(B(x,\delta))}{\log \delta} \geq \frac{(h-\varepsilon)n}{((n+1) \log 2) / (s-\varepsilon)} = \frac{(h-\varepsilon)(s-\varepsilon)}{\log 2} +O\left(\frac1n\right).
    \]
    Thus the following holds for $\mu$-a.e.~$x$,
    \begin{equation}\label{eq:localdimbounds}
    \frac{(h-\varepsilon)(s-\varepsilon)}{\log 2} \leq \underline{\dim}(\mu,x) \leq \overline{\dim}(\mu,x) \leq \frac{(h+\varepsilon)(t+\varepsilon)}{\log 2}.
    \end{equation}

    Again by~\eqref{eq:dim}, there exist strictly increasing sequences of natural numbers $(n_k)_{k=1}^\infty$ and $(m_k)_{k=1}^\infty$ such that for all $k$,
    \[
    \frac{n_k \log 2}{-\log (c_1 c_2\dotsm c_{n_k})} \leq s+\varepsilon \quad \mbox{ and } \quad \frac{m_k \log 2}{-\log (c_1 c_2\dotsm c_{m_k})} \geq t-\varepsilon.
    \]
    Using the fact that each of the $c_i$ is at most $1/3$ and the Shannon--McMillan--Breiman theorem, it follows that for $\mu$-a.e.~$x$, there exists $N_{x}\in \N$ such that for all $n\geq N_{x}$ we have
    \[
    \mu(B(x,c_1 c_2\dotsm c_{n})) = \mu(I_n(x)) \in \left[e^{-(h-\varepsilon)n},e^{-(h+\varepsilon)n}\right].
    \]
    Combining this observation with the properties of the sequences $(n_{k})$ and $(m_{k})$ stated above, it follows that
    \[
    \underline{\dim}(\mu,x) \leq \frac{(h+\varepsilon)(s+\varepsilon)}{\log 2}\quad\text{ and }\quad\overline{\dim}(\mu,x) \geq \frac{(h-\varepsilon)(t-\varepsilon)}{\log 2}
    \]for $\mu$-a.e.~$x$. Combining these inequalities with~\eqref{eq:localdimbounds} and using that $\varepsilon$ was arbitrary, the proof is now complete.
\end{proof}

\begin{proof}[Proof of Theorem~\ref{thm:notexact}]
Here we do not need to use the Shannon--McMillan--Breiman theorem, because $\mu(I_{\omega}) = 2^{-n}$ for all $\omega \in \Omega^n$. The remainder of the proof proceeds exactly as in the proof of Theorem~\ref{thm:symmetricnotexact}. We emphasise that because of the fact $\mu(I_{\omega}) = 2^{-n}$ for all $\omega \in \Omega^n$ the estimates used in the proof of Theorem~\ref{thm:symmetricnotexact} hold for all $x$ instead of for $\mu$-a.e.~$x$.
\end{proof}

Equipped with Proposition~\ref{prop:bilip1} we can now also prove Theorem~\ref{thm:emptyinterior}.
\begin{proof}[Proof of Theorem \ref{thm:emptyinterior}]
Take ${\bf c}=(c_n)_{n\geq 1}$ with $c_n = \frac12-\frac{1}{2^{n+1}}$. One can check that~\eqref{n:biLipi1} is satisfied. By Proposition~\ref{prop:bilip1} we know that the maps $\phi_{0}$ and $\phi_{1}$ are bi-Lipschitz contractions. It is straightforward to observe that the attractor of this IFS has empty interior. Moreover, it follows from our construction that the Lebesgue measure of the attractor is equal to $\prod_{n=1}^{\infty}(1-\frac{1}{2^n})$ which is strictly positive.
\end{proof}

\section{Proofs of Corollaries \ref{cor:dynamical} and \ref{cor:dynamicalmeasure} }
\label{S-8}
In this section, we prove Corollaries \ref{cor:dynamical} and \ref{cor:dynamicalmeasure}.

\begin{proof}[Proof of Corollary~\ref{cor:dynamical}]
From the proof of Theorem~\ref{t:!bdim}, there exist two bi-Lipschitz contraction maps $\phi_0,\phi_1 \colon \R \to \R$ with $$0 = \phi_0(0) < \phi_0(1) < \phi_1(0) < \phi_1(1) = 1,$$
 and with the attractor $E \subset [0,1]$ of the IFS $\{\phi_0,\phi_1\}$ satisfying $$\dim_{\mathrm H} E < \lbdim E < \ubdim E < \adim E.$$
  Fix $\delta > 0$ small enough that $\phi_0(1) + 2\delta < \phi_1(0) - 2\delta$. We now define $f$ piece by piece. For $0 \leq x \leq \phi_0(1)$ let $f(x) = \phi_0^{-1}(x)$ and for $\phi_1(0) \leq x \leq 1$ let $f(x) = \phi_1^{-1}(x)$. On each of the four intervals \[
(-\infty,0], \ \ [\phi_0(1),\phi_0(1) + 2\delta], \ \ [\phi_1(0)-2\delta,\phi_1(0)], \ \ [1,\infty),
\]
let $f$ be affine with slope $2$. Finally, let $f$ be affine on $[\phi_0(1) + 2\delta , \phi_1(0)-2\delta]$ with the (negative) slope required to ensure $f$ is continuous. Now let $$U = (-\delta,\phi_0(1)+\delta) \cup (\phi_1(0)-\delta,1+\delta).$$
 Since $\phi_0$ and $\phi_1$ are contractions, the desired uniform local expansion for $f$ on $U$ holds. Finally, since $f$ maps $\phi_0(F)$ bijectively onto $F$, and $f$ maps $\phi_1(F)$ bijectively onto $F$, we have $E = f(E) = f^{-1}(E) \cap U$, as required.
\end{proof}

\begin{proof}[Proof of Corollary~\ref{cor:dynamicalmeasure}]
    The set $E$ can be taken as the symmetric Cantor set that is the attractor of the bi-Lipschitz IFS $\{\phi_0,\phi_1\}$ from Proposition~\ref{prop:bilip1}, so $\lbdim E < \ubdim E$. Let $\mu$ be the stationary measure for this IFS with probability weights $(1/2,1/2)$. Then,  by Theorem~\ref{thm:notexact}, we have $\underline{\dim}(\mu,x) = \lbdim E$ and $\ubdim E = \overline{\dim}(\mu,x)$ for all $x \in \supp(\mu)$. The construction of the dynamical system now proceeds exactly as in the proof of Corollary~\ref{cor:dynamical}. The fact that $\mu$ is $f$-invariant and ergodic follows because the resulting dynamical system is isomorphic to the full shift on two symbols equipped with the $(1/2,1/2)$ Bernoulli product measure.
\end{proof}

\subsection*{Acknowledgments}
The authors would like to thank Andr\'{a}s M\'{a}th\'{e} for bringing their attention to Question~\ref{q-1}, and Xiong Jin for useful discussions. Baker and Banaji were supported by Baker's EPSRC New Investigators Award (EP/W003880/1) at Loughborough University.
Banaji was also supported by the Marie Skłodowska-Curie Actions postdoctoral fellowship FoDeNoF (no. 101210409) from the European Union, and by Tuomas Orponen's Research Council of Finland grant (no. 355453), at the University of Jyväskylä.
Feng was supported by the General Research Fund grant (projects CUHK14303021, CUHK14305722) from the Hong Kong Research Grant Council, and by a direct grant for research from the Chinese University
of Hong Kong. Lai was supported by the AMS-Simons Research Enhancement Grants for Primarily Undergraduate Institution (PUI) Faculty. Xiong
was supported by NSFC grant 12271175 and 11871227.

\end{document}